\newcommand{\gap}{\vspace{0.1in}}
\newcommand{\wt}{\widetilde}
\newcommand{\wh}{\widehat}
\newcommand{\ol}{\overline}
\newcommand{\SOL}{ \mbox{SOL} }
\newcommand{\Wcal}{\mathcal W}
\newcommand{\Ical}{\mathcal I}
\newcommand{\Jcal}{\mathcal J}
\newcommand{\Scal}{\mathcal S}
\newcommand{\Pcal}{\mathcal P}
\newcommand{\Ccal}{\mathcal C}
\newcommand{\Kcal}{\mathcal K}
\newcommand{\Lcal}{\mathcal L}
\newcommand{\mycut}[1]{{}}
\newcommand{\argmin}{\operatornamewithlimits{\arg\min}}
\newtheorem{theorem}{Theorem}[section] %[section]
\newtheorem{lemma}{Lemma}[section] %[section]
\newtheorem{corollary}{Corollary}[section] %[section]
\newtheorem{proposition}{Proposition}[section] %[section]
\newtheorem{remark}{Remark}[section]%[section]
\newtheorem{example}{Example}[section] %[section]
\begin{document}

\title{Column Partition based Distributed Algorithms for Coupled Convex Sparse Optimization: Dual and Exact Regularization Approaches}

%\date{}

\author{Jinglai Shen\footnote{J. Shen and Eswar Kumar H.K. are with Department of Mathematics and Statistics, University of Maryland Baltimore County, Baltimore, MD 21250, U.S.A. Emails: {\tt shenj@umbc.edu} and {\tt eswar1@umbc.edu}.}, \ \ \ Jianghai Hu\footnote{J. Hu is with the School of
    Electrical and Computer Engineering, Purdue University, West Lafayette, IN 47907, U.S.A. E-mail: {\tt jianghai@purdue.edu}.}, \ \ and \ \ Eswar Kumar Hathibelagal Kammara}

\maketitle

\begin{abstract}
This paper develops column partition based distributed schemes for a class of large-scale convex sparse optimization problems, e.g., basis pursuit (BP), LASSO, basis pursuit denosing (BPDN),  and their extensions, e.g., fused LASSO. We are particularly interested in the cases where
the number of (scalar) decision variables is much larger than the number of (scalar) measurements, and each agent has limited memory or computing capacity such that it only knows a small number of columns of a measurement matrix. These problems in consideration are densely coupled and cannot be formulated as separable convex programs using column partition. To overcome this difficulty, we consider their dual problems which are separable or locally coupled. Once a dual solution is attained, it is shown that a primal
solution can be found from the dual of corresponding regularized BP-like problems
under suitable exact regularization conditions. A wide range of existing distributed
schemes can be exploited to solve the obtained dual problems. This yields two-stage column partition based distributed schemes for LASSO-like and BPDN-like problems; the overall convergence of these schemes is established using sensitivity analysis techniques. Numerical results illustrate the effectiveness of the proposed schemes.
%
%The proposed dual based distributed schemes are expected to be useful for large-scale problems arising from %big data and multi-agent network systems.
\end{abstract}

%------------------------------------------------------------
%
\section{Motivation and Introduction} \label{sect:introduction}

Sparse modeling and approximation finds broad applications in numerous fields of contemporary interest, including signal and image processing, compressed sensing, machine learning, and high dimensional statistics and data analytics. Various efficient  schemes have been proposed for convex or nonconvex sparse signal recovery \cite{FoucartRauhut_book2013, ShenMou_Arx19}. To motivate the work of this paper, consider the well-studied LASSO problem:
%
%which is a convex relaxation of $\ell_0$-(quasi)norm based sparse optimization:
%
$
\min_{x\in \mathbb R^N} \ \frac{1}{2} \| A x - b \|^2_2 + \lambda \, \| x \|_1,
$
%
%$
%   \underset{x \in \mathbb R^N}{\text{min}} \ \frac{1}{2} \| A x - b \|^2_2 + \lambda \, \| x \|_1.
%$
%
where $A \in \mathbb R^{m\times N}$ is the measurement (or sensing) matrix, $b \in \mathbb R^m$ is the measurement (or sensing) vector, $\lambda >0$ is the penalty parameter, and $x\in \mathbb R^N$ is the decision variable. In the setting of sparse recovery, $N$ is much larger than $m$. Besides,  the measurement matrix $A$ usually satisfies certain uniform recovery conditions for recovery efficiency, e.g., the restricted isometry property \cite{FoucartRauhut_book2013}. As such, $A$ is often a dense matrix, namely, (almost) all of its elements are nonzero. We aim to develop distributed algorithms to solve the LASSO and other relevant problems, where each agent only knows the vector $b$ and a small subset of the columns of $A$. Specifically, let $\{\Ical_1, \ldots, \Ical_p\}$ be a disjoint union of $\{1, \ldots, N\}$ such that $\{ A_{\bullet\Ical_i} \}^p_{i=1}$ forms a column partition of $A$.  For each $i$, the $i$th agent only has the knowledge of $b$ and $A_{\bullet\Ical_i}$. By running the proposed distributed scheme, it is expected that each agent $i$ attains the subvector of an optimal solution of the LASSO corresponding to the index set $\Ical_i$, i.e., $x^*_{\Ical_i}$, at the end of the scheme, where $x^*$ denotes an optimal solution of the LASSO.

The distributed optimization task described above is inspired by the following two scenarios arising from big data and network systems, respectively. In the context of big data, a practitioner may deal with a {\em ultra-large} data set, e.g., $N$ is extremely large, so that it would be impossible to store a vector $x \in \mathbb R^N$ in a single computing device, let alone the measurement matrix $A$. When $m$ is relatively small compared with $N$,  the proposed distributed schemes can be used where each device only needs to store the vector $b$ and a small number of the columns of $A$. The second scenario arises from multi-agent network systems, where each agent is operated by a low cost computing device which has limited memory and computing capacities.  Hence, even when $N$ is moderately large, it would be impractical for the entire matrix $A$ to be stored or computed on one of these devices. Therefore, the proposed distributed schemes can be exploited in this scenario.
Besides, the proposed algorithms can be extended to other sub-matrix  partitions (in both row and column) of $A$, even if $m$ is large in the above scenarios.

Centralized algorithms for the LASSO and other related convex sparse optimization problems, e.g., BP and BPDN, have been extensively studied, and can be categorized into the first-order methods \cite{BeckTeb_SIAMG09, FoucartRauhut_book2013, LaiYin_SIAMG13, WrightNF_TSP09, Yin_SIMAGE10}, and the second-order methods \cite{ByrdCNO_MP16, LiSunToh_SIOPT18}. A number of effective distributed or decentralized schemes have also been developed, for example, \cite{HuXiaoLiu_CDC18, LiuWright_SIOPT15, Mota_ISP11, PengYYin_Asilomor13, ShiLWYin_TSP15, YuanLY_SIOP16}, just to name a few.
%
%\cite{YuanLY_SIOP16} \cite{Mota_ISP11}
%
To the best of our knowledge, most of the current  distributed or decentralized schemes for the LASSO and BPDN require the central knowledge of the entire $A$ in at least one step of these schemes (which are referred to as {\em partially distributed}); exceptions include \cite{Mota_ISP11, YuanLY_SIOP16} for distributed basis pursuit using the dual approach.
%
%For this reason, we call these schemes {\em partially distributed}.
%
In contrast, the distributed schemes developed in this paper do not require the knowledge of the entire $A$ for any agent throughout the schemes and are {\em fully distributed}.
%
%This feature is especially appealing to the above-mentioned scenarios of big data and network systems whose %agents have limited memory capacities.
%
A major difficulty of developing column partition based fully distributed schemes for the LASSO and BPDN problems is that they are densely coupled.
Recently,  distributed schemes are developed for locally coupled convex programs \cite{HuXiaoLiu_CDC18}. However, since $A$ is a dense matrix, the loss function $\| A x  - b\|^2_2$ cannot be written in a locally coupled manner over a general network. Hence, the technique in \cite{HuXiaoLiu_CDC18} cannot be directly applied to the LASSO and BPDN.

%
%via operator splitting methods. It is worth pointing out that because $A$ is often a dense matrix as %indicated before, the loss function $\| A x  - b\|^2_2$ cannot be written in a separated or locally coupled %manner (unless any two agents are linked). Hence, the technique in \cite{HuXiaoLiu_CDC18} cannot be directly %applied to column partition based distributed computation of the LASSO.
%

The development of the proposed distributed algorithms relies on several key techniques in convex optimization, including dual problems, solution properties of the LASSO and BPDN, exact regularization,  distributed computing of separable convex programs, and sensitivity analysis. First, motivated by the dual approach for distributed BP \cite{Mota_ISP11, YuanLY_SIOP16}, we consider the Lagrangian dual problems of LASSO and BPDN, which are separable or locally coupled and thus can be solved via column partition based distributed schemes. By using  the solution properties of the LASSO and BPDN, we show that a primal solution is a solution of a basis pursuit-like (BP-like) problem depending on a dual solution. Under exact regularization conditions, a primal solution can be obtained from the dual of a regularized BP-like problem which can be solved by another column partition based distributed scheme. This leads to two-stage, column partition based distributed schemes for the LASSO and BPDN, where many existing distributed schemes or methods (e.g., distributed consensus optimization and distributed averaging schemes) can be used at each stage. The overall convergence of the two-stage schemes is established  via sensitivity analysis of the BP-like problem. The proposed schemes are applicable to a broad class of generalized BP, LASSO and BPDN under mild assumptions on  communication networks; we only assume that a network is static, connected and bidirectional. Extensions to time-varying networks can be made.

%
% The development of the proposed algorithms relies on several key techniques in convex optimization, %including dual problems, solution properties of the LASSO, exact regularization of convex programs,  and %distributed computing of locally coupled convex optimization \cite{HuXiaoLiu_CDC18} via operator splitting %methods. It is worth pointing out that because $A$ is often a dense matrix as indicated before, the loss %function $\| A x  - b\|^2_2$ cannot be written in a separated or locally coupled manner (unless any two %agents are linked). Hence, the technique in \cite{HuXiaoLiu_CDC18} cannot be directly applied to column %partition based distributed computation of the LASSO. ({\bf more to come ...})
%

The rest of the paper is organized as follows. In Section~\ref{sect:problem_formulation_properties}, we formulate the BP, LASSO, BPDN and their extensions, and discuss basic solution properties. Exact regularization of these problems is investigated in  Section~\ref{sect:exact_regu}. Section~\ref{sect:dual_problems} establishes their dual problems and studies  properties in connection with the primal problems. Based on these results, column partition based fully distributed schemes are developed in Section~\ref{sect:distributed_schemes}, including two-stage distributed schemes for the LASSO-like and BPDN-like problems whose overall convergence is shown in Section~\ref{sect:overall_converg} via sensitivity analysis tools. Finally, numerical results are given in Section~\ref{sect:numerical}, and conclusions are drawn in Section~\ref{sect:conclusions}.

{\it Notation}.
Let $A$ be an $m\times N$ real matrix.
For any index set $\Scal \subseteq \{1, \ldots, N\}$, let
%
%$|\Scal|$ denote the cardinality of $\Scal$, $\mathcal S^c$ denote the complement of $\Scal$, and
%
$A_{\bullet\Scal}$ be the matrix formed by the columns of $A$ indexed by elements of $\Scal$. Similarly, for an index set $\alpha \subseteq \{1, \ldots, m\}$, $A_{\alpha\bullet}$ is the matrix formed by the rows of $A$ indexed by elements of $\alpha$.
 Let $\{\Ical_i\}^p_{i=1}$ form a disjoint union of $\{1, \ldots, N\}$, and $\{ x_{\Ical_i} \}^p_{i=1}$ form a partition of $x \in \mathbb R^N$.
%
%We write the $i$th column of $A$ as $A_{\bullet i}$ instead of $A_{\bullet \{i\} } $.
%
%Further, $\mathbb R^N_+$ and $\mathbb R^N_{++}$ denote the nonnegative and
%
%positive orthants of $\mathbb R^N$ respectively,
%
%and $\mathbf e_j$ denotes the $j$th column of the $N\times N$ identity matrix.
%
%For a vector $z=(z_1, \ldots, z_n)^T$ whose each $z_i \ne 0$, we define $\mbox{sgn}(z):=( z_1/|z_1|, \ldots, %z_n/|z_n|)^T$.
%
For  $a \in \mathbb R^n$, let $a_+:=\max(a, 0) \ge 0$ and $a_-:=\max(-a, 0) \ge 0$. %% Hence, $a=a_+ - a_-$.
%
%
% For a given $x \in \mathbb R^N$, $\supp(x)$ denotes the support of $x$, i.e., $\supp(x)=\{ i \, | \, x_i \ne %0 \}$. The standard inner product on $\mathbb R^n$ is denoted by $\langle \cdot, \cdot \rangle$.
%When a minimization problem has multiple solutions, $x \in \Argmin$ denotes an arbitrary optimal solution; if %there is a unique optimal solution, then we use $x =\argmin$.
%.e., there is a unique optimal solution, then we use argmin.
%
%Let $\mbox{cone}(S)$ denote the conic hull of a set $S$ in $\mathbb R^N$, i.e., the collection of nonnegative %combinations of finitely many vectors in $S$. We always assume that a cone in $\mathbb R^n$ contains the zero %vector.
%
%For two sets $A$ and $B$, $A \subseteq B$ means that $A$ is a subset of $B$ and $A$ possibly equals to $B$, %while $A \subset B$ means that $A$ is a proper subset of $B$.
%
%For $K \in \mathbb N$, let $\Sigma_K$ be the set of all vectors $x \in \mathbb R^N$  satisfying %$|\supp(x)|\le K$.
%
 For a closed convex set $\Ccal$ in $\mathbb R^n$, $\Pi_\Ccal$ denotes the Euclidean projection operator onto $\Ccal$. For  $u, v \in \mathbb R^n$, $u \perp v$ stands for the orthogonality of $u$ and $v$, i.e., $u^T v =0$.

%-------------------------------------------------------------------------------------
%
\section{Problem Formulation and Solution Properties} \label{sect:problem_formulation_properties}

We consider a class of  convex sparse minimization problems and their generalizations or extensions whose formulations are given as follows.

\gap

\noindent $\bullet$ {\bf Basis Pursuit (BP) and Extensions}. This problem intends to recover a sparse vector from noiseless measurement $b$ given by the following linear equality constrained optimization problem
%
%%\cite{CandesRT_TIT06, ChenDS_SIAMREV01, FoucartRauhut_book2013}:
%
\begin{equation} \label{eqn:BP}
\text{BP}: \quad  \underset{x \in \mathbb R^N}{\text{min}} \ \|x\|_1  \quad \text{subject to} \quad Ax=b,
\end{equation}
where we assume $b\in R(A)$. Geometrically, this problem seeks to minimize the $1$-norm distance from the origin to the affine set defined by $Ax = b$.  A generalization of the BP (\ref{eqn:BP}) is $\min_{x \in \mathbb R^N} \| E x \|_1$ subject to $ A x = b$, where $E \in \mathbb R^{r\times N}$ is a matrix.

\gap

\noindent $\bullet$ {\bf Least Absolute Shrinkage and Selection Operator (LASSO) and Extensions}. The standard LASSO intends to minimize the loss function $\|A x - b\|^2_2$ along with the $\ell_1$-norm penalty on $x$ treated as a convex relaxation of the sparsity of $x$:
\begin{equation} \label{eqn:Lasso}
  \text{LASSO}: \quad \underset{x \in \mathbb R^N}{\text{min}} \ \frac{1}{2} \| A x - b \|^2_2 + \lambda \, \| x \|_1,
\end{equation}
where $\lambda >0$ is the penalty parameter. A generalized LASSO is given by
$
   \underset{x \in \mathbb R^N}{\text{min}} \ \frac{1}{2} \| A x - b \|^2_2 +  \| E x \|_1,
$
where $E \in \mathbb R^{r\times N}$ is a given matrix. It includes several extensions and variations of the standard LASSO:
\begin{itemize}
  \item [(i)] Fused LASSO: $\min_{x \in \mathbb R^N} \ \frac{1}{2} \| A x - b \|^2_2 + \lambda_1 \, \|  x \|_1 + \lambda_2 \| D_1 x \|_1$, where $D_1 \in \mathbb R^{(N-1)\times N}$ denotes the first order difference matrix. Letting $E:=\begin{bmatrix} \lambda_1 I_N \\ \lambda_2 D_1 \end{bmatrix}$, the fused LASSO can be converted to the generalized LASSO.
  \item [(ii)] Generalized total variation denoising and $\ell_1$-trend filtering \cite{KimKBG_SIREW09}. A generalized total variation denoising is a generalized LASSO with $E=\lambda D_1$ for $\lambda>0$, whereas the generalized $\ell_1$-trend filtering has $ E=\lambda D_2$, where $D_2$ is the second order difference matrix.
%%%
\end{itemize}
Another related LASSO problem is the group LASSO, which is widely used in statistics for model selection \cite{YuanLin_JRSS06}. For a vector partition $\{ x_{\Ical_i} \}^p_{i=1}$ of $x \in \mathbb R^N$,
%
%Let $\{\Ical_i\}^p_{i=1}$ form a disjoint union of $\{1, \ldots, N\}$, and $\{ x_{\Ical_i} \}^p_{i=1}$ form a %partition of $x \in \mathbb R^N$.
%
the group LASSO with $\lambda_i>0$ is given by
\begin{equation} \label{eqn:group_Lasso}
   \underset{x \in \mathbb R^N}{\text{min}} \ \frac{1}{2} \| A x - b \|^2_2 + \sum^p_{i=1} \lambda_i \| x_{\Ical_i} \|_2.
\end{equation}

\noindent $\bullet$ {\bf Basis Pursuit Denoising (BPDN) and Extensions}. Consider the following constrained optimization problem which incorporates noisy signals:
\begin{equation} \label{eqn:BP_denoising01}
 \text{BPDN}: \quad \underset{x \in \mathbb R^N}{\text{min}} \ \| x \|_1 \ \quad \text{subject to} \quad \| A x - b \|_2 \le \sigma,
\end{equation}
where $\sigma > 0$ characterizes the bound of noise or errors.
Note that when $\sigma=0$, it reduces to the basis pursuit. We assume that $\|b\|_2 > \sigma$ since otherwise, $x_*=0$ is the trivial solution.
Similar to the LASSO, the BPDN has several generalizations and extensions.
For example, it can be extended to $\min_{x \in \mathbb R^N} \| E x \|_1$ subject to $\| A x - b \|_2 \le \sigma$, where $E \in \mathbb R^{r\times N}$ is a matrix.
%
%Similarly, when $p=1$, the optimization problem (\ref{eqn:BP_denoising02}) pertains to a relevant formulation %of basis pursuit denoising \cite{FoucartRauhut_book2013,  VanDenBergF_SSC08}.
%
%It is noted that in some fields, e.g., statistics, (\ref{eqn:BP_denoising01}) is called LASSO while %(\ref{eqn:Lasso}) is called basis pursuit denoising. In this paper, we adhere to the conventional %terminologies adopted in the other fields of optimization and signal processing.
%
%
%\gap
%
%{\bf Regularized Sparse Logistic Regression}:
%\[
%   \min_{x \in \mathbb R^N} \frac{1}{m} \sum^\ell_{k=1} \log\big( 1 + \exp(-y_k (w^k)^T x) \big) + %\frac{\mu}{2} \|x \|^2_2 + \lambda \| x\|_1
%\]
%where the constants $\mu>0$ and $\lambda>0$, $w^k \in N$ for each $k=1, \ldots, \ell$ are given samples of %$N$ features, and $y_k \in \{-1, 1\}$ are given binary classifications.
%
%\gap
%

%\gap

We summarize some fundamental solution properties of the aforementioned problems to be used in the subsequent development. For the convenience of generalizations and extensions, we treat the aforementioned problems in a more general setting. Let the constant $q > 1$, $E \in \mathbb R^{r \times N}$ be a matrix, $\|\cdot \|_\star$ be a norm on the Euclidean space, and $\Ccal$ be a polyhedral set. Consider the following problems:
\begin{eqnarray}
  (P_1): & \quad \min_{x \in \mathbb R^N} \ \|E x \|_\star \quad \mbox{ subject to } \quad Ax = b, \ \mbox{ and } x \in \Ccal \label{eqn:gen_BP}  \\
  (P_2): &\quad \min_{x \in \mathbb R^N} \ \frac{1}{2}\|A x - b \|^q_q + \|E x \|_\star \quad \mbox{ subject to } \quad  x \in \Ccal \label{eqn:gen_Lasso} \\
  (P_3): & \qquad \qquad \ \ \min_{x \in \mathbb R^N} \  \|E x \|_\star \quad \mbox{ subject to } \quad  \|A x - b \|_q \le \sigma \ \mbox{ and } \quad x \in \Ccal. \label{eqn:gen_BPDN}
\end{eqnarray}
Note that all the BP, LASSO, and BPDN models introduced before can be formulated within the above framework. For example, in the group LASSO,  $\|x \|_\star :=\sum^p_{i=1}\lambda_i \|x_{\Ical_i}\|_2$ is a norm. Letting $E:=I_N$ and $q=2$, the group LASSO is a special case of $(P_2)$.
%
%Similarly, the sparse group LASSO is also a special case of $(P_2)$.
%

\begin{proposition} \label{prop:soln_property}
 Fix $q>1$, and assume that the problems $(P_1)$-$(P_3)$ are feasible. The following hold:
  \begin{itemize}
     \item [(i)] Each of the problems $(P_1)-(P_3)$ attains a minimizer;
     \item [(ii)] Let $\mathcal H_2$ be the solution set of $(P_2)$. Then $A x = A x'$ and $\| E x\|_\star=\| E x' \|_\star$ for all $x, x' \in \mathcal H_2$;
     \item [(iii)] Suppose in $(P_3)$ that $\| b\|_q > \sigma$, $0\in \Ccal$, and the optimal value of $(P_3)$ is positive.
      Then each minimizer $x_*$ of $(P_3)$ satisfies $\| A x_* - b \|_q = \sigma$ and $A x$ is constant on the solution set.
  \end{itemize}
\end{proposition}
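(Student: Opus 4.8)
The plan is to treat all three parts by exploiting that the objectives and constraints depend on $x$ only through the linear images $Ax$ and $Ex$, together with the strict convexity that $q>1$ provides. For part (i), the subtlety is that the feasible sets are generally unbounded and the objective $\|Ex\|_\star$ need not be coercive in $x$ (for instance when $E$ has a nontrivial kernel), so a direct minimizing-sequence argument in $x$-space may fail. I would instead lift each problem through the linear map $x\mapsto(Ax,Ex)$. For $(P_1)$ and $(P_2)$ the relevant feasible image, namely $\{(Ax,Ex):x\in\mathcal C\}$ intersected with $\{u=b\}$ for $(P_1)$, is the image of a polyhedron under a linear map, hence again a polyhedron and in particular closed. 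On this closed image the reduced objective, $\|v\|_\star$ for $(P_1)$ and $\tfrac12\|u-b\|^q_q+\|v\|_\star$ for $(P_2)$, is coercive: the norm term controls $v$, and for $(P_2)$ the loss term controls $u$ since $q>1$. A minimizing sequence therefore stays bounded, converges along a subsequence to a point of the closed image, and pulls back to a minimizer in $x$-space. For $(P_3)$ coercivity of $\|v\|_\star$ alone is unavailable, but the constraint $\|u-b\|_q\le\sigma$ confines $u$ to a bounded set while boundedness of the objective along a minimizing sequence bounds $v$; the same extraction-and-pullback argument then applies.

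For part (ii) I would use the strict convexity of $u\mapsto\tfrac12\|u-b\|^q_q$ for $q>1$. Let $x,x'\in\mathcal H_2$ and set $\bar x=\tfrac12(x+x')\in\mathcal C$. Convexity of the objective gives $f(\bar x)\le\tfrac12 f(x)+\tfrac12 f(x')$, and since $\bar x$ is again optimal this holds with equality. Writing the objective as $\phi(Ax)+\psi(Ex)$ with $\phi$ strictly convex and $\psi=\|\cdot\|_\star$ convex, the equality forces equality in the Jensen step for $\phi$, which by strict convexity yields $Ax=Ax'$. Given $Ax=Ax'$, equality of the total optimal values then forces $\|Ex\|_\star=\|Ex'\|_\star$.

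For part (iii), the first claim follows from a scaling argument: if some minimizer $x_*$ had $\|Ax_*-b\|_q<\sigma$, then since $0\in\mathcal C$ the points $x_t=(1-t)x_*$ lie in $\mathcal C$, remain feasible for small $t>0$ by continuity of $t\mapsto\|(1-t)Ax_*-b\|_q$ at $t=0$, and satisfy $\|Ex_t\|_\star=(1-t)\|Ex_*\|_\star<\|Ex_*\|_\star$ because the optimal value $\|Ex_*\|_\star$ is positive; this contradicts optimality, so every minimizer activates the constraint. For the constancy of $Ax$, take two minimizers $x_*,x_*'$; their midpoint $\bar x$ is feasible by the triangle inequality and optimal by convexity, hence by the first claim it too activates the constraint. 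This turns the chain $\|\tfrac12(Ax_*-b)+\tfrac12(Ax_*'-b)\|_q=\tfrac12\|Ax_*-b\|_q+\tfrac12\|Ax_*'-b\|_q=\sigma$ into an equality in Minkowski's inequality. For finite $q>1$ the $\ell_q$-norm is strictly convex, so equality forces $Ax_*-b$ and $Ax_*'-b$ to be nonnegative multiples of one another; since both have norm $\sigma>0$ the multiple is $1$, giving $Ax_*=Ax_*'$.

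The step I expect to be the main obstacle is the existence proof in part (i): establishing attainment despite unbounded feasible sets and a non-coercive $x$-objective. The image-space reduction, together with the verification that the relevant images are closed (polyhedral for $(P_1)$ and $(P_2)$, and bounded in $u$ for $(P_3)$), is the crux. Once attainment is secured, parts (ii) and (iii) reduce to relatively direct applications of the strict convexity of the $q$-power loss and of the $\ell_q$-norm, respectively.
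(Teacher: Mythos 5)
Your proof is correct and, to the extent the paper gives an argument at all, follows the same route: the paper disposes of (i) and (ii) by citing \cite{MouShen_COCV18} and \cite{ZhangYC_JOTA15}, and for (iii) invokes \cite{ZhangYC_JOTA15} for constraint activation and then strict convexity of $\|\cdot\|^q_q$ on the solution set for the constancy of $Ax$ --- which is exactly the scaling argument and the strict-convexity/Minkowski-equality step you carry out in detail. Your image-space lifting through $x\mapsto(Ax,Ex)$ is a clean and valid way to supply the attainment proof that the paper delegates to the references.
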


\begin{proof}
  Statements (i) and (ii) follow from a similar proof of \cite[Theorem 4.1]{MouShen_COCV18} and \cite[Lemma 4.1]{ZhangYC_JOTA15}, respectively. For Statement (iii), by a similar argument of \cite[Lemma 4.2(3)]{ZhangYC_JOTA15}, we have that any minimizer $x_*$ of $(P_3)$ satisfies $\| A x_* - b \|_q = \sigma$. Since $\| \cdot \|^q_q$ is strictly convex for $q>1$ \cite[Appendix]{ShenMousavi_SIOPT18} and $\| A x - b\|^q_q$ is  constant (whose value is $\sigma^q$) on the solution set, we deduce that $A x$ is constant on the solution set.
\end{proof}

A sufficient condition for the optimal value of $(P_3)$ to be positive, along with the conditions that $\| b\|_q > \sigma$ and $0\in \Ccal$, is that $E$ has full column rank. In fact, when $\| b\|_q > \sigma$ and $0\in \Ccal$, any minimizer $x_*$ must be nonzero. If $E$ has full column rank, then $E x_* \ne 0$ so that $\| E x_* \|_\star >0$.

To compute a solution of the LASSO in (\ref{eqn:Lasso_primal}) using its dual solution,
%
%$y^*$ for $i=1, \ldots, p$,
%
we need the following result similar to \cite[Theorem 2.1]{ZhangYC_JOTA15} or \cite[Proposition 3.2]{MouShen_COCV18}.  To be self-contained, we present its proof below.

\begin{proposition} \label{prop:Lasso_to_BP}
The following hold:
\begin{itemize}
   \item [(i)] Let $x_*$ be a minimizer of $(P_2)$ given by (\ref{eqn:gen_Lasso}). Then $z_*$ is a minimizer of $(P_2)$ if and only if $z_*$ is a minimizer of the BP-like problem given by  (\ref{eqn:gen_BP}), i.e.,
  $
    (P_1): \   \min_{z \in \mathbb R^N} \ \| E z \|_\star  \mbox{ subject to }  A z = A x_*  \mbox{ and } \ z \in \Ccal.
  $
  Furthermore, the optimal value of $(P_1)$ equals $\|E x_* \|_\star$.
\item [(ii)]  Let $x_*$ be a minimizer of $(P_3)$ given by (\ref{eqn:gen_BPDN}) which satisfies:  $\| b\|_q > \sigma$, $0\in \Ccal$, and the optimal value of $(P_3)$ is positive. Then $z_*$ is a minimizer of $(P_3)$ if and only if $z_*$ is a minimizer of the BP-like problem  (\ref{eqn:gen_BP}) with $b:= A x_*$, and  the optimal value of this $(P_1)$ equals $\|E x_* \|_\star$.
\end{itemize}
\end{proposition}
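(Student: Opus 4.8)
The plan is to exploit Proposition~\ref{prop:soln_property}, which pins down $A x$ (and, for $(P_2)$, the value of $\|E x\|_\star$) across the solution sets of $(P_2)$ and $(P_3)$. The underlying mechanism is that once $A z$ is forced to equal $A x_*$, the loss $\frac{1}{2}\|A z - b\|_q^q$ in $(P_2)$, respectively the constraint $\|A z - b\|_q \le \sigma$ in $(P_3)$, becomes a constant determined by $x_*$; the two problems then differ only in how they treat this frozen data term, so both implications reduce to comparing objective values on the common feasible sets.

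For part (i), I would first establish that the optimal value of the BP-like $(P_1)$ equals $\|E x_*\|_\star$. Since $x_*$ is feasible for $(P_1)$ (trivially $A x_* = A x_*$ and $x_* \in \Ccal$), the optimal value is at most $\|E x_*\|_\star$; if some $\tilde z$ feasible for $(P_1)$ had $\|E \tilde z\|_\star < \|E x_*\|_\star$, then using $A\tilde z = A x_*$ to equate the loss terms would give $\frac{1}{2}\|A\tilde z - b\|_q^q + \|E\tilde z\|_\star < \frac{1}{2}\|A x_* - b\|_q^q + \|E x_*\|_\star$, contradicting optimality of $x_*$ in $(P_2)$. For the forward implication I take a minimizer $z_*$ of $(P_2)$; Proposition~\ref{prop:soln_property}(ii) yields $A z_* = A x_*$ and $\|E z_*\|_\star = \|E x_*\|_\star$, so $z_*$ is feasible for $(P_1)$ and attains its optimal value, hence minimizes $(P_1)$. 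Conversely, if $z_*$ minimizes $(P_1)$, then $z_* \in \Ccal$, $A z_* = A x_*$, and $\|E z_*\|_\star = \|E x_*\|_\star$; substituting back gives $\frac{1}{2}\|A z_* - b\|_q^q + \|E z_*\|_\star = \frac{1}{2}\|A x_* - b\|_q^q + \|E x_*\|_\star$, the optimal value of $(P_2)$, so $z_*$ minimizes $(P_2)$.

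Part (ii) follows the same two-sided value comparison, now leaning on Proposition~\ref{prop:soln_property}(iii): under the stated hypotheses every minimizer of $(P_3)$ satisfies $\|A x - b\|_q = \sigma$ and $A x$ is constant on the solution set. The one extra point to check---the only place the inequality constraint intervenes---is that any $\tilde z$ feasible for $(P_1)$ (so $A\tilde z = A x_*$) is automatically feasible for $(P_3)$, since $\|A\tilde z - b\|_q = \|A x_* - b\|_q = \sigma \le \sigma$. With this feasibility transfer in hand, the argument mirrors part (i): $x_*$ is $(P_1)$-feasible giving the bound $\le \|E x_*\|_\star$, any strictly better $(P_1)$-feasible point would be a strictly better $(P_3)$-feasible point (a contradiction), so the optimal value of $(P_1)$ equals $\|E x_*\|_\star$; then Proposition~\ref{prop:soln_property}(iii) makes any $(P_3)$-minimizer $(P_1)$-feasible with the correct value, and conversely a $(P_1)$-minimizer is $(P_3)$-feasible with objective equal to the $(P_3)$-optimal value.

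I expect no serious obstacle, as both directions collapse to elementary value comparisons. The only subtle points are remembering that $\|E x\|_\star$ is literally the objective of $(P_3)$ (hence automatically constant on its solution set, so the analogue of the second conclusion in Proposition~\ref{prop:soln_property}(ii) comes for free), whereas for $(P_2)$ the constancy of $\|E x\|_\star$ must be imported from Proposition~\ref{prop:soln_property}(ii); and verifying the boundary-activity/feasibility transfer in part (ii) that licenses identifying $(P_1)$-feasibility with $(P_3)$-feasibility on the sphere $\|A x - b\|_q = \sigma$.
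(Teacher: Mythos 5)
Your proposal is correct and follows essentially the same route as the paper's proof: both rest on Proposition~\ref{prop:soln_property} to fix $Ax$ and $\|Ex\|_\star$ on the solution sets, use the fact that $Az = Ax_*$ freezes the loss term (resp.\ transfers feasibility in the BPDN case), and conclude by elementary comparisons of objective values. The only cosmetic difference is that you establish the optimal value of $(P_1)$ up front while the paper extracts it inside the ``if'' direction; the substance is identical.
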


\begin{proof}
(i)  Let $\mathcal H_2$ be the solution set of $(P_2)$ given by (\ref{eqn:gen_Lasso}). By Proposition~\ref{prop:soln_property}, $A x = A x_*$ and $\|E x \|_\star = \|E x_* \|_\star$ for any $x \in \mathcal H_2$. Let $J(x):= \frac{1}{2} \| A x - y \|^q_q +  \| E x \|_\star$ be the objective function.
 For the ``if'' part, let $z_*$ be a minimizer of $(P_1)$. Then $z_* \in \Ccal$, $A z_* = A x_*$ and $\|E z_*\|_\star \le \| E x_* \|_\star$. Hence, $J(z_*) \le J(x_*)$. On the other hand, $J(x_*) \le J(z_*)$ because $x_*$ is a minimizer of $(P_2)$. Therefore, $J(x_*)=J(z_*)$  so that $z_*$ is a minimizer of $(P_2)$. It also implies that $\|E z_*\|_\star =\|E x_*\|_\star$ or equivalently  the optimal value of $(P_1)$ equals $\|E x_* \|_\star$.
 To show the ``only if'' part, let $z_*$ be a minimizer of  $(P_2)$.  Suppose $z_*$ is not a minimizer of $(P_1)$. Then there exists $u \in \mathbb R^N$ such that $u \in \Ccal$, $A u = A x_*$ and $\| E u \|_\star < \|E z_*\|_\star$. Since $z_*$ is a minimizer of $(P_2)$, we have $A z_* = A x_*$ and $\|E z_*\|_\star = \|E x_* \|_\star$. Hence, $J(u) < J(z_*)$, yielding a contradiction.

(ii) Suppose $(P_3)$ satisfies the specified conditions, and let $\mathcal H_3$ denote its solution set. By statement (iii) Proposition~\ref{prop:soln_property}, we have $\mathcal H_3=\{x \in \Ccal \, | \, A x = A x_*, \, \|E x \|_\star = \|E x_* \|_\star \}$ for a minimizer $x_*$ of $(P_3)$. ``If'': suppose $z_*$ be a minimizer of $(P_1)$ with $b:=A x_*$. Then $z_*\in \Ccal$, $A z_* = A x_*$ and $\|E z_*\|_\star \le \| E x_* \|_\star$. This shows that $z_*$ is a feasible point of $(P_3)$ and hence a minimizer in view of $\|E z_*\|_\star=\|E x_*\|_\star$. ``Only if'':
since any feasible point of $(P_1)$ with $b:= A x_*$ is a feasible point of $(P_3)$ and since $x_*$ is a feasible point of this $(P_1)$, we see that the optimal value of this $(P_1)$ equals $\| Ex_*\|_\star$.
Suppose $z_*$ is a minimizer of $(P_3)$.
 Then $z_* \in \mathcal H_3$ such that $z_* \in \Ccal$, $A z_* = Ax_*$, and $\|E z_*\|_\star = \|E x_* \|_\star$. Hence $z_*$ is a feasible point of this $(P_1)$ and thus a minimizer of this $(P_1)$.
\end{proof}

%\gap

%%\gap

%------------------------------------------------------------------------------------
%
\section{Exact Regularization} \label{sect:exact_regu}

A key step in the development of column partition based distributed algorithms is  using dual problems. To establish a relation between solutions of a primal problem and its dual, we consider regularization of the primal problem, which is expected to give rise to a solution of the original primal problem. This pertains to the exact regularization of the original primal problem \cite{FriedlanderTesng_SIOPT07}.

%------------------------------------------------------------------------
%
%%\subsection{Exact Regularization}

We briefly review the exact regularization of general convex programs given in \cite{FriedlanderTesng_SIOPT07}. Consider the convex minimization problem $(P)$ and its regularized problem $(P_\varepsilon)$ for some $\varepsilon \ge 0$:
\[
   (P): \quad \min_{x \in \Pcal} f(x); \qquad \qquad (P_\varepsilon): \quad \min_{x \in \Pcal} f(x)+ \varepsilon h(x),
\]
where $f, h:\mathbb R^N \rightarrow \mathbb R$ are real-valued convex functions, and $\Pcal$ is a closed convex set. It is assumed that $(P)$ has a solution, and $h$ is coercive such that $(P_\varepsilon)$ has a solution for each $\varepsilon>0$. A weaker assumption can be made for $h$; see \cite[Section 1.2]{FriedlanderTesng_SIOPT07} for details. We call the problem $(P)$ {\em exactly regularized} if there exists $\ol\varepsilon>0$ such that for any $\varepsilon \in (0, \ol\varepsilon]$, any solution of $(P_\varepsilon)$ is a solution of $(P)$. To establish the exact regularization, consider the following convex program: letting $f_*$ be the optimal value of $(P)$,
\[
   (P_h): \quad \min_{x \in \Pcal, \, f(x) \le f_*} \, h(x).
\]
Clearly,  the constraint set of $(P_h)$ is equivalent to $\{x \, | \, x \in \Pcal, f(x)=f_*\}$, which is the solution set of $(P)$. It is shown in \cite[Theorem 2.1]{FriedlanderTesng_SIOPT07} or \cite[Corollary 2.2]{FriedlanderTesng_SIOPT07} that $(P)$ is exactly regularized by the regularization function $h$ if and only if $(P_h)$ has a Lagrange multiplier $\mu_* \ge 0$, i.e., there exists a constant $\mu_* \ge 0$ such that
$
   \min_{x \in \Pcal, \, f(x) \le f_*} \, h(x)  =  \min_{x \in \Pcal } \, h(x) + \mu_* \big( \, f(x) - f_* \big).
$
%%%%We obtain the following corollary.

\begin{corollary} \label{coro:P_h_Lagrange}
 The problem $(P_h)$ has a Lagrange multiplier $\mu_* \ge 0$ if and only if there exists a constant $\mu \ge 0$ such that  a minimizer $x_*$ of $(P_h)$ is a minimizer of $\min_{x \in \Pcal} h(x)+ \mu( f(x) - f_*)$.
\end{corollary}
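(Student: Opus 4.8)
The plan is to exploit the elementary observation, already made in the excerpt, that the feasible set $\{x \in \Pcal : f(x) \le f_*\}$ of $(P_h)$ coincides with the solution set $\{x \in \Pcal : f(x) = f_*\}$ of $(P)$. Consequently, \emph{any} minimizer $x_*$ of $(P_h)$ satisfies $f(x_*) = f_*$, so the penalty term $\mu(f(x_*) - f_*)$ vanishes for every $\mu \ge 0$. Writing $h_*$ for the optimal value of $(P_h)$ and abbreviating $g_\mu(x) := h(x) + \mu(f(x) - f_*)$, this yields the single crucial identity $g_\mu(x_*) = h(x_*) = h_*$, valid for any minimizer $x_*$ of $(P_h)$ and any $\mu \ge 0$. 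Both implications then reduce to comparing this value against $\min_{x \in \Pcal} g_\mu(x)$.

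For the ``if'' direction I would assume that, for some $\mu \ge 0$, a minimizer $x_*$ of $(P_h)$ also minimizes $g_\mu$ over $\Pcal$. Then $\min_{x \in \Pcal} g_\mu(x) = g_\mu(x_*) = h_*$ by the identity above, which is exactly the defining equality of a Lagrange multiplier for $(P_h)$ with $\mu_* := \mu$; hence $(P_h)$ admits a nonnegative Lagrange multiplier. For the ``only if'' direction I would assume $(P_h)$ has a Lagrange multiplier $\mu_* \ge 0$, i.e. $\min_{x \in \Pcal} g_{\mu_*}(x) = h_*$. Taking any minimizer $x_*$ of $(P_h)$ and using $g_{\mu_*}(x_*) = h_*$, I conclude $g_{\mu_*}(x_*) = \min_{x \in \Pcal} g_{\mu_*}(x)$, so $x_*$ minimizes $g_{\mu_*}$; choosing $\mu := \mu_*$ completes the argument. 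This is really just a restatement of the Lagrangian-duality characterization of exact regularization quoted from \cite{FriedlanderTesng_SIOPT07}, recast from an equality of optimal \emph{values} into a pointwise minimizer condition.

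The core step in both directions is recognizing that the penalty term is inactive at the optimum of $(P_h)$, which collapses the equivalence to a matching of scalar optimal values; there is no substantial obstacle here. The only point I would take care with is the upgrade of $f(x_*) \le f_*$ to $f(x_*) = f_*$, which uses the reverse inequality $f(x_*) \ge f_*$ coming from optimality of $f_*$ over $\Pcal$, together with the fact (from Proposition~\ref{prop:soln_property}-type attainment) that the minima involved are actually achieved so that the comparisons are between realized values rather than infima. A secondary bookkeeping issue is the quantifier on $x_*$: since every minimizer of $(P_h)$ shares the same value $h_*$ and the same residual $f(x_*)-f_*=0$, the argument is insensitive to which minimizer is chosen, so the ``there exists a minimizer'' reading in the statement is consistent in both directions.
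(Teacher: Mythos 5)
Your proposal is correct and follows essentially the same route as the paper's proof: both arguments hinge on upgrading $f(x_*)\le f_*$ to $f(x_*)=f_*$ for any minimizer $x_*$ of $(P_h)$, so that the penalty term vanishes and each direction reduces to matching the scalar value $h(x_*)$ against $\min_{x\in\Pcal}\,h(x)+\mu(f(x)-f_*)$. No gaps.
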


\begin{proof}
``If'': suppose a constant $\mu \ge 0$ exists such that  a minimizer $x_*$ of $(P_h)$ is that of $\min_{x \in \Pcal} h(x)+ \mu( f(x) - f_*)$. Since $x_*$ is a feasible point of $(P_h)$, we have $x_* \in \Pcal$ and $f(x_*)\le f_*$ or equivalently $f(x_*)=f_*$. Hence, the optimal value of $\min_{x \in \Pcal} h(x)+ \mu( f(x) - f_*)$ is given by $h(x_*)$, which equals  $\min_{x \in \Pcal, \, f(x) \le f_*} \, h(x)$. Hence, $\mu_*:=\mu\ge 0$ is a Lagrange multiplier of $(P_h)$.

``Only If'': Let $\mu_* \ge 0$ be a Lagrange multiplier of $(P_h)$, and $x_*$ be a minimizer of $(P_h)$. Again, we have $x_* \in \Pcal$ and $f(x_*)=f_*$. This shows that $h(x_*)+ \mu_* (f(x_*) -f_*)=h(x_*)$. Hence,
\[
  h(x_*) \, =  \, \min_{x \in \Pcal, \, f(x) \le f_*} h(x) \, = \, \min_{x \in \Pcal } \, h(x) + \mu_* \big( \, f(x) - f_* \big) \le h(x_*).
\]
We thus deduce that $x_*$ is a minimizer of $\min_{x \in \Pcal} h(x)+ \mu( f(x) - f_*)$ with $\mu:=\mu_*$.
\end{proof}

%
%When $h$ is strongly convex, the above corollary says that $(P_h)$ has a Lagrange multiplier if and only if %the unique minimizer of $(P_h)$ is a minimizer of $\min_{x \in \Pcal } \, h(x) + \mu \big(f(x) - f_* \big)$ %for some $\mu \ge 0$.

%
%\begin{remark} \rm \label{remark:P_h_Lagrange}
%Note that the proof of the above corollary also shows that $(P_h)$ has a Lagrange multiplier $\mu_* \ge 0$ if %and only if there exist a constant $\mu \ge 0$ such that a minimizer $x_*$ of $(P_h)$ is a minimizer of %$\min_{x \in \Pcal} h(x)+ \mu( f(x) - f_*)$.
%
%When $h$ is strongly convex, the above corollary says that $(P_h)$ has a Lagrange multiplier if and only if %the unique minimizer of $(P_h)$ is a minimizer of $\min_{x \in \Pcal } \, h(x) + \mu \big(f(x) - f_* \big)$ %for some $\mu \ge 0$.
%\end{remark}
%

%--------------------------------------------------------------
%
\subsection{Exact Regularization of Convex Piecewise Affine Function based Optimization} \label{subsect:exact_reg_convx_PA}

We consider the exact regularization of convex piecewise affine functions based convex minimization problems with its applications to $\ell_1$-minimization given by the BP, LASSO, and BPDN.
A real-valued continuous  function $f:\mathbb R^N \rightarrow \mathbb R$ is piecewise affine (PA) if there exists a finite family of real-valued affine functions $\{f_i\}^\ell_{i=1}$ such that $h(x) \in \{f_i(x)\}^\ell_{i=1}$ for each $x\in \mathbb R^N$. %%% \cite{Scholtes_thesis94}.
A convex PA function $f:\mathbb R^N \rightarrow \mathbb R$ has the max-formulation \cite[Section 19]{Rockafellar_book70},
%
%or \cite[Proposition 2.3.5]{Bertsekas_book09},
%
i.e., there exists a finite family of $(p_i, \gamma_i) \in \mathbb R^N \times \mathbb R, i=1, \ldots, \ell$ such that  $f(x) = \max_{i=1, \ldots, \ell} \, \big( \, p^T_i x + \gamma_i \, \big)$.
%
%\begin{equation} \label{eqn:PA_max}
%   f(x) \, = \, \max_{i=1, \ldots, \ell} \, \big( \, p^T_i x + \gamma_i \, \big).
%\end{equation}
%
Convex PA functions represent an important class of nonsmooth convex functions in many applications, e.g., the $\ell_1$-norm $\|\cdot \|_1$, $f(x):=\| E x \|_1$ for a matrix $E$, a polyhedral gauge, and the $\ell_\infty$-norm; see \cite{MouShen_COCV18} for more discussions.
%
%This setting is also important since any norm can be approximated to arbitrary precision by a polytopic norm. %
We first present a technical lemma whose proof is omitted.

\begin{lemma} \label{lem:P_extension}
  Let $f:\mathbb R^N \rightarrow \mathbb R$ and $h:\mathbb R^N \rightarrow \mathbb R$ be (not necessarily convex) functions and $\Pcal$ be a set such that $\min_{x \in \Pcal} f(x)$ attains a minimizer and its optimal value is denoted by $f_*$.
    Let the set $\mathcal W:= \{ (x, t) \, | \, x \in \Pcal, f(x) \le t \}$.
  Consider the following problems:
  \begin{align*}
    %%& (P): \quad \min_{x \in \Pcal} f(x),  \qquad \qquad \qquad (P'): \quad \min_{(x, t) \in \Wcal} t, \\
    & (P_\varepsilon): \quad \min_{x \in \Pcal} f(x)+ \varepsilon h(x);  \qquad \  (P'_\varepsilon): \quad \min_{(x, t) \in \Wcal} t + \varepsilon h(x), \qquad \varepsilon \ge 0;  \\
    & (P_h): \quad \min_{x \in \Pcal, \, f(x) \le f_* } h(x);   \ \ \qquad (P'_h): \quad \min_{(x, t) \in \Wcal, \, t \le f_*} h(x).
 \end{align*}
%%% where $\varepsilon \ge 0$.  %%%Under the assumptions (A.1)-(A.2),
Then the following hold:
 \begin{itemize}
   \item [(i)] Fix an arbitrary $\varepsilon \ge 0$. Then (a) if $x_*$ is an optimal solution of  $(P_\varepsilon)$, then $(x_*, f(x_*))$ is an optimal solution of $(P'_\varepsilon)$; (b) if $(x_*, t_*)$ is an optimal solution of $(P'_\varepsilon)$, then $t_*=f(x_*)$ and $x_*$ is an optimal solution of  $(P_\varepsilon)$.

  \item [(ii)] (a) If $x_*$ is an optimal solution of  $(P_h)$, then $(x_*, f_*)$ is an optimal solution of $(P'_h)$; (b) if $(x_*, t_*)$ is an optimal solution of $(P'_h)$, then $t_*=f_*$ and $x_*$ is an optimal solution of  $(P_h)$.
 \end{itemize}
\end{lemma}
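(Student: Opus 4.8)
The plan is to treat all four equivalences as instances of a single epigraph reformulation, driven by one elementary fact: for every $(x,t)\in\Wcal$ we have $t\ge f(x)$, so the auxiliary objectives dominate the original ones, and the two agree precisely when the lifting variable is tight, i.e. $t=f(x)$. I would emphasize up front that no convexity of $f$ or $h$ is used anywhere; the argument is purely a bookkeeping comparison of feasible sets and objective values.

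First I would record two reusable observations. \emph{Lifting}: if $x\in\Pcal$ then $(x,f(x))\in\Wcal$, and the auxiliary objective evaluated there reproduces the original, since $t+\varepsilon h(x)=f(x)+\varepsilon h(x)$ (resp. the value is $h(x)$). \emph{Tightening}: if $(x,t)\in\Wcal$ with $t>f(x)$, then replacing $t$ by $f(x)$ stays in $\Wcal$ and strictly decreases $t+\varepsilon h(x)$ while leaving $h(x)$ unchanged. These two facts yield both directions of (i) at once. For (i)(a), from a minimizer $x_*$ of $(P_\varepsilon)$ the lifted point $(x_*,f(x_*))$ is feasible for $(P'_\varepsilon)$ with matching value, and for any $(x,t)\in\Wcal$ the chain $t+\varepsilon h(x)\ge f(x)+\varepsilon h(x)\ge f(x_*)+\varepsilon h(x_*)$ certifies optimality. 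For (i)(b), tightening forces $t_*=f(x_*)$ at any minimizer $(x_*,t_*)$ of $(P'_\varepsilon)$ (otherwise $(x_*,f(x_*))$ beats it), after which lifting each $x\in\Pcal$ shows $x_*$ minimizes $f+\varepsilon h$ over $\Pcal$.

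Part (ii) follows the same template applied to the constrained problems, with one extra step: pinning down the value of $t$ via $f_*$. The key is that $f_*$ is the optimal value of $\min_{x\in\Pcal}f(x)$, so any $x\in\Pcal$ satisfies $f(x)\ge f_*$. Hence if $x_*$ solves $(P_h)$ then $f(x_*)\le f_*$ forces $f(x_*)=f_*$, making $(x_*,f_*)\in\Wcal$ feasible for $(P'_h)$; conversely, feasibility of $(x_*,t_*)$ for $(P'_h)$ gives the two-sided bound $f_*\le f(x_*)\le t_*\le f_*$, collapsing to $t_*=f_*=f(x_*)$. The feasibility correspondence then reads: $x$ is feasible for $(P_h)$ iff $(x,f_*)\in\Wcal$ with $f_*\le f_*$; and since the objective $h(x)$ is unaffected by the lift, optimal values and minimizers match in both directions exactly as in (i).

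The only place demanding care—the \textbf{main obstacle} such as it is—is the tightness argument ($t_*=f(x_*)$ in (i)(b) and $t_*=f_*$ in (ii)(b)); everything else is direct substitution. In (i)(b) the $\varepsilon h$ terms cancel when comparing $(x_*,t_*)$ against $(x_*,f(x_*))$, so strictness is driven solely by $t_*>f(x_*)$ and, notably, does \emph{not} require $\varepsilon>0$. In (ii)(b) tightness is instead squeezed out of $f_*\le f(x_*)\le t_*\le f_*$, which is precisely where the defining property of $f_*$ is consumed. I would therefore write out (i) in full and then remark that (ii) reuses the identical lifting and tightening steps, with the constraint $t\le f_*$ playing the role of the epigraph coupling.
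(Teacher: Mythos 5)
Your proposal is correct and follows essentially the same route as the paper's (omitted) proof: both argue by comparing objective values at the lifted point $(x,f(x))$ and using $t\ge f(x)$ on $\Wcal$ to force tightness of the epigraph variable, with the squeeze $f_*\le f(x_*)\le t_*\le f_*$ handling part (ii). Your explicit ``lifting/tightening'' packaging is just a cleaner organization of the identical argument.
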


\mycut{
\begin{proof}
  (i) Fix an $\varepsilon \ge 0$. (a) Suppose $x_*$ is an optimal solution of $(P_\varepsilon)$. Clearly, $(x_*, f(x_*))\in \Wcal$. For any $(x, t) \in \Wcal$, we have $f(x) \le t$. Since $x_*$ is an optimal solution of $(P_\varepsilon)$, we have $f(x_*) + \varepsilon h(x_*) \le f(x) + \varepsilon h(x) \le t + \varepsilon h(x)$ for all $(x, t) \in \Wcal$. This shows that $(x_*, f(x_*))$ is an optimal solution of $(P'_\varepsilon)$. To show Part (b), suppose $(x_*, t_*)$ is an optimal solution of $(P'_\varepsilon)$. Clearly,  $f(x_*) \le t_*$ by the definition of $\Wcal$. Since $(x_*, f(x_*)) \in \Wcal$, $t_*=\min_{(x_*, t) \in \Wcal} t \le f(x_*)$. Therefore, $t_*=f(x_*)$. Furthermore, for an arbitrary $x \in \Pcal$, we have $(x, f(x)) \in \Wcal$. Since $t_* + \varepsilon h(x_*) \le t + \varepsilon h(x)$ for any $(x, t) \in \Wcal$, we have $f(x_*)+  \varepsilon h(x_*) \le f(x)+  \varepsilon h(x)$. Hence, $x_*$ is an optimal solution of $(P_\varepsilon)$.

  (ii) For Part (a), let $x_*$ be an optimal solution of  $(P_h)$, i.e., $h(x_*)\le h(x)$ for all $x \in \Pcal$ with $f(x) \le f_*$. Consider an arbitrary $(x, t) \in \Wcal$ with $t\le f_*$. Then $x \in \Pcal$ and $f(x) \le t \le f_*$. This shows that $x$ is a feasible point of $(P_h)$, yielding $h(x_*) \le h(x)$. Noting that $f(x) \equiv f_*$ for all feasible points of $(P_h)$, we see that $(x_*, f_*)$ is an optimal solution of $(P'_h)$. Conversely, suppose $(x_*, t_*)$ is an optimal solution of $(P'_h)$, i.e., $x_* \in \Pcal, f(x_*) \le t_* \le f_*$. As $f(x_*)=f_*$, we have $t_*=f_*$. For any feasible point $x$ of $(P_h)$,  $(x, f(x))$ is a feasible point of $(P'_h)$. Thus $h(x_*) \le h(x)$ such that $x_*$ is a minimizer of $(P_h)$.
\end{proof}
}

%%\gap

The following proposition shows exact regularization for convex PA objective functions on a polyhedral set. This result has been mentioned in \cite{Yin_SIMAGE10} without a formal proof; we present a proof for completeness.

\begin{proposition} \label{prop:exact_reg_BP_like}
  Let $\Pcal$ be a polyhedral set, and $f:\mathbb R^N \rightarrow \mathbb R$ be a convex PA function  such that the problem $ (P): \min_{x \in \Pcal} f(x) $ has the nonempty solution set, and let
  $h:\mathbb R^N \rightarrow \mathbb R$ be a convex regularization function which is coercive.
    Then there exists $\ol \varepsilon >0$ such that for any $\varepsilon \in (0, \ol \varepsilon]$, any optimal solution of the regularized problem $(P_\varepsilon)$ is an optimal solution of $(P)$.
\end{proposition}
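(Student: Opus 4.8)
The plan is to invoke the Friedlander--Tseng characterization recalled above: the problem $(P)$ is exactly regularized by $h$ if and only if the associated problem $(P_h): \min_{x\in\Pcal,\, f(x)\le f_*} h(x)$ admits a Lagrange multiplier $\mu_*\ge 0$, meaning $\min_{x\in\Pcal,\,f(x)\le f_*} h(x)=\min_{x\in\Pcal} h(x)+\mu_*\big(f(x)-f_*\big)$. Hence the whole task reduces to exhibiting such a multiplier, after which Friedlander--Tseng supplies the desired $\ol\varepsilon>0$. The genuine difficulty is that $(P_h)$ typically violates Slater's condition: its feasible set coincides with the solution set of $(P)$ (since $f\ge f_*$ on $\Pcal$ forces $f(x)\le f_*\Leftrightarrow f(x)=f_*$), so the constraint $f(x)\le f_*$ is active with equality at \emph{every} feasible point and no strictly feasible point exists. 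The route around this is to exploit that both $\Pcal$ and $f$ are polyhedral, which yields a multiplier even without Slater.

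Concretely, I would first pass to the lifted formulation of Lemma~\ref{lem:P_extension}. Writing $f$ in its max-form $f(x)=\max_{i=1,\dots,\ell}\big(p_i^T x+\gamma_i\big)$, the set $\mathcal W=\{(x,t)\mid x\in\Pcal,\ f(x)\le t\}=\{(x,t)\mid x\in\Pcal,\ p_i^T x+\gamma_i\le t,\ i=1,\dots,\ell\}$ is polyhedral, being the intersection of $\Pcal\times\mathbb R$ with finitely many half-spaces. By Lemma~\ref{lem:P_extension}(ii) the minimizers of $(P_h)$ are exactly the $x$-parts of minimizers of $(P'_h): \min_{(x,t)\in\mathcal W,\ t\le f_*} h(x)$, and the two problems share the same optimal value; moreover a minimizer of $(P_h)$ exists because the continuous coercive $h$ attains its minimum on the nonempty closed solution set of $(P)$, so $(P'_h)$ has a minimizer too. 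Thus it suffices to produce a multiplier for the single affine constraint $t\le f_*$ in the convex program $(P'_h)$ over the polyhedral set $\mathcal W$.

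At this point the key fact is that a convex minimization problem all of whose constraints are affine over a polyhedral set automatically satisfies a constraint qualification (the polyhedral/linear constraint qualification), so the KKT conditions are necessary at any minimizer and a multiplier exists; equivalently, strong duality holds for such polyhedral convex programs (see \cite{Rockafellar_book70}). Applying this to $(P'_h)$ produces $\mu_*\ge 0$ with
\[
\min_{(x,t)\in\mathcal W,\ t\le f_*} h(x)\;=\;\min_{(x,t)\in\mathcal W} h(x)+\mu_*\big(t-f_*\big).
\]
Finally I would eliminate $t$: since $(x,t)\in\mathcal W$ means $x\in\Pcal$ and $t\ge f(x)$, and since $\mu_*\ge 0$, the inner minimization over $t$ is attained at $t=f(x)$, so the right-hand side equals $\min_{x\in\Pcal} h(x)+\mu_*\big(f(x)-f_*\big)$, while the left-hand side equals the optimal value of $(P_h)$ by Lemma~\ref{lem:P_extension}(ii). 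This is precisely the Lagrange-multiplier identity for $(P_h)$ (equivalently the condition of Corollary~\ref{coro:P_h_Lagrange}), and the proof concludes via the Friedlander--Tseng equivalence.

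The main obstacle is exactly the failure of Slater's condition identified in the first paragraph; the entire argument hinges on replacing it by the polyhedrality of $\Pcal$ and of $\mathrm{epi}(f)$, which is what licenses the affine-constraint qualification after the lift. (Alternatively, one can avoid the lift and argue directly from the max-representation: write $f(x)\le f_*$ as the affine system $p_i^T x+\gamma_i\le f_*$, invoke the same polyhedral KKT theorem to obtain multipliers $\lambda_i\ge0$ supported on the active indices, and aggregate them into $\mu_*=\sum_i\lambda_i$ using $\sum_i(\lambda_i/\mu_*)p_i\in\partial f(x_*)$; the lifted version is cleaner given Lemma~\ref{lem:P_extension}.)
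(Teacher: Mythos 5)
Your proposal is correct and follows essentially the same route as the paper: lift to the polyhedral epigraph set $\Wcal$ via Lemma~\ref{lem:P_extension}, use the max-representation of $f$ to see $\Wcal$ is polyhedral, invoke the linear/polyhedral constraint qualification to get a multiplier $\mu_*\ge 0$ for the single affine constraint $t\le f_*$, and conclude by the Friedlander--Tseng equivalence. The only (immaterial) organizational difference is that the paper applies Friedlander--Tseng to the lifted pair $(P'_0)$, $(P'_\varepsilon)$ and transfers exact regularization back through the equivalence, whereas you eliminate $t$ to transfer the multiplier itself back to $(P_h)$ and apply Friedlander--Tseng to the original problem.
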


\begin{proof}
 Let $f_*$ be the optimal value of the problem $(P)$.
 In view of Lemma~\ref{lem:P_extension}, $(P)$ is equivalent to $(P'_0)$ and $(P_\varepsilon)$ is equivalent to $(P'_\varepsilon)$ for any $\varepsilon>0$ in the sense given by Lemma~\ref{lem:P_extension}. Hence, to show the exact regularization of $(P)$ via $(P_\varepsilon)$, it suffices to show the exact regularization of $(P'_0)$ via $(P'_\varepsilon)$. To show the latter, it follows from  \cite[Theorem 2.1]{FriedlanderTesng_SIOPT07} or \cite[Corollary 2.2]{FriedlanderTesng_SIOPT07} that we only need to show that $(P'_h)$ attains a Lagrange multiplier, namely, there exists a Lagrange multiplier $\mu_* \ge 0$ such that
$
  \min_{(x, t)\in \Wcal, \, t \le f_*} h(x)= \min_{(x, t) \in \Wcal} h(x) + \mu_* (t- f_*),
$
where we recall that $f_*$ is the optimal value of $(P)$ and $\mathcal W:= \{ (x, t) \, | \, x \in \Pcal, f(x) \le t \}$.
Suppose the convex PA function $f$ is given by $f(x) = \max_{i=1, \ldots, \ell} \, \big( \, p^T_i x + \gamma_i \, \big)$.
%
%
%(\ref{eqn:PA_max}).
%
Then
$
 \Wcal \, = \,  \{ (x, t) \, | \, x \in \Pcal, \ p^T_i x + \gamma_i \le t, \ \forall \, i=1, \ldots, \ell \},
$
and $\Wcal$ is thus a polyhedral set.
Since $\Wcal$ is polyhedral and $t \le f_*$ is a linear inequality constraint, it follows from \cite[Proposition 5.2.1]{Bertsekas_book99} that there exists $\mu_* \ge 0$ such that $\min_{(x, t)\in \Wcal, \, t \le f_*} h(x)= \min_{(x, t) \in \Wcal} h(x) + \mu_* (t- f_*)$. By \cite[Corollary 2.2]{FriedlanderTesng_SIOPT07}, $(P'_\varepsilon)$ is the exact regularization of $(P'_0)$ for all small $\varepsilon>0$.
\end{proof}

The above proposition yields the exact regularization for the BP-like problem with the $\ell_1$-norm.

\begin{corollary} \label{coro:exact_reg_BPL1}
  Let $\Ccal$ be a polyhedral set.
  %
  % and $h:\mathbb R^N \rightarrow \mathbb R$ be a coercive convex function.
  %
   Then the following problem  attains the exact regularization of $(P_1)$ for all sufficiently small $\alpha>0$:
  \begin{eqnarray*}
  (P_{1, \alpha}): & \quad \min_{x \in \mathbb R^N} \ \|E x \|_1 + \frac{\alpha}{2} \|x\|^2_2 \quad \mbox{ subject to } \quad Ax = b, \ \mbox{ and } x \in \Ccal.
\end{eqnarray*}
%where we assume that $\|b\|_2>\sigma$, %%%$E$ has full column rank,
%the optimal value is positive, and $0 \in \Ccal$ for $(P_{3})$.
\end{corollary}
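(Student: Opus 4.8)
The plan is to obtain this as a direct specialization of Proposition~\ref{prop:exact_reg_BP_like}, after recasting $(P_1)$ and $(P_{1,\alpha})$ into the abstract form $(P)$ and $(P_\varepsilon)$ of that proposition with the identification $\varepsilon := \alpha$. First I would take the constraint set to be $\Pcal := \{ x \in \mathbb R^N \mid A x = b, \ x \in \Ccal \}$, the objective to be $f(x) := \| E x \|_1$, and the regularizer to be $h(x) := \tfrac{1}{2}\|x\|_2^2$. With these choices $(P)$ becomes exactly the BP-like problem $(P_1)$ (in its $\ell_1$ instance) and $(P_\varepsilon)$ becomes exactly $(P_{1,\alpha})$, so the conclusion of Proposition~\ref{prop:exact_reg_BP_like} is precisely the assertion of the corollary.

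It then remains only to check the three hypotheses of Proposition~\ref{prop:exact_reg_BP_like}. For the polyhedrality of $\Pcal$: it is the intersection of the affine set $\{ x \mid A x = b \}$ with the polyhedral set $\Ccal$, hence polyhedral. For $f$ being convex PA: I would use the coordinatewise identity $|a| = \max(a, -a)$ to write $\| E x \|_1 = \sum_{j=1}^r |(E x)_j| = \max_{s \in \{-1,+1\}^r} s^T E x = \max_{s \in \{-1,+1\}^r} (E^T s)^T x$, which exhibits $f$ as the maximum of the $2^r$ affine functions indexed by the sign vectors $s$, i.e.\ precisely the max-formulation characterizing a convex PA function. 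For $h$: convexity and coercivity of $\tfrac{1}{2}\|x\|_2^2$ are immediate. Finally, the nonemptiness of the solution set of $(P_1)$ follows from the standing feasibility assumption together with Proposition~\ref{prop:soln_property}(i), which guarantees that $(P_1)$ attains a minimizer.

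I do not anticipate any serious obstacle, since the statement is essentially a reformatting of Proposition~\ref{prop:exact_reg_BP_like} once the data $(f,h,\Pcal)$ are correctly matched; the only point demanding a line of care is the verification that $\|Ex\|_1$ lies in the convex PA class, which the max-over-sign-vectors representation settles cleanly. Invoking Proposition~\ref{prop:exact_reg_BP_like} then produces a threshold $\ol\varepsilon > 0$ such that for every $\alpha \in (0, \ol\varepsilon]$ any optimal solution of $(P_{1,\alpha})$ is an optimal solution of $(P_1)$, establishing the exact regularization.
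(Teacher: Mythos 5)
Your proposal is correct and follows essentially the same route as the paper: both identify $f(x)=\|Ex\|_1$, $h(x)=\tfrac{1}{2}\|x\|_2^2$ (the paper writes $\|x\|_2^2$, an immaterial scaling), and $\Pcal=\{x \mid Ax=b,\ x\in\Ccal\}$, and then invoke Proposition~\ref{prop:exact_reg_BP_like}. The only difference is that you spell out the verification that $\|Ex\|_1$ is convex PA via the max-over-sign-vectors representation and the solution existence via Proposition~\ref{prop:soln_property}(i), details the paper leaves implicit.
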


\begin{proof}
  %%$(P_{1})$:
  %
  Let $f(x):=\|E x\|_1$ which is a convex PA function, $h(x) := \|x\|^2_2$, and $\Pcal :=\{ x \, | \, A x = b, \ x \in \Ccal\}$. Then $\Pcal$ is a polyhedral set. Applying Proposition~\ref{prop:exact_reg_BP_like}, we conclude that the exact regularization holds.
\end{proof}

%-----------------------------------------------------------------------------------------------
%
\subsubsection{Failure of Exact Regularization of the LASSO and BPDN Problems} \label{subsect:fail_exact_reg_Lasso_BPDN}

%%%\noindent {\bf Failure of exact regularization of the LASSO problem} \

We investigate exact regularization of the LASSO and BPDN  when the $\ell_1$-norm is used. For simplicity, we focus on the standard problems (i.e., $\mathcal C=\mathbb R^N$) although the results developed here can be extended. It follows from Proposition~\ref{prop:soln_property} that the solution sets of the standard LASSO and BPDN are polyhedral. Hence, the constraint sets of $(P_h)$ associated with the LASSO and BPDN are polyhedral.
However, unlike the BP-like problem,  we show by examples that exact regularization fails in general. This motivates us to develop two-stage distributed algorithms in Section~\ref{sect:distributed_schemes} rather than directly using the regularized LASSO and BPDN.
%
%Our first example is concerned with  the standard LASSO (\ref{eqn:Lasso}) and the regularization function %$h(x)=\| x\|^2_2$.
%
%
 Our first example shows that in general, the standard LASSO (\ref{eqn:Lasso})
%
 %%with the $\ell_1$-norm penalty
 %
 is {\em not} exactly regularized by the regularization function $h(x)=\| x\|^2_2$.

\begin{example} \rm
  Let $A =[ I_2 \ I_2 \ \cdots \ I_2]\in \mathbb R^{2\times N}$ with $N=2r$ for some $r\in \mathbb N$, and $b\in \mathbb R^2_{++}$. Hence, we can partition a vector $x \in \mathbb R^N$ as $x=(x^1, \ldots, x^r)$ where each $x^i\in \mathbb R^2$. When $0< \lambda <1$, it follows from the KKT condition:
  $ 0 \in A^T(A x_* -b) + \lambda \partial \|x_* \|_1$ and   a straightforward computation
   that a particular optimal solution $x_*$ is given by $x^i_*=\frac{1-\lambda}{r} b >0$ for all $i=1, \ldots, r$. Hence,
   the solution set $\mathcal H = \{ x =(x^1, \ldots, x^r) \, | \, \sum^r_{i=1} x^i = (1-\lambda)b, \, \|x\|_1 \le (1-\lambda) \|b\|_1 \}$.
 %
 %  =\{ x =(x^1, \ldots, x^r) \, | \, \sum^r_{i=1} x^i = (1-\lambda)b, \, \|x\|_1 = (1-\lambda) \|b\|_1 \}$.
 %
    Consider the regularized LASSO for $\alpha>0$: $\min_{x\in \mathbb R^N}\frac{1}{2} \| A x - b \|^2_2 + \lambda \, \| x \|_1 + \frac{\alpha}{2} \|x\|^2_2$. For each $\alpha>0$, it can be shown that its unique optimal solution $x_{*, \alpha}$ is given by $x^i_{*, \alpha} = \frac{1-\lambda}{r+\alpha} b$ for each $i=1, \ldots, r$. Hence,  $x_{*, \alpha}\notin \mathcal H$ for any $\alpha>0$. \hspace{50pt} $\square$
\end{example}

%%\gap

%%%\noindent {\bf Failure of exact regularization of the BPDN problem} \
%
In what follows, we show the failure of exact regularization of the standard BPDN (\ref{eqn:BP_denoising01}). Consider the convex minimization problem for a constant $\mu \ge 0$,
\[
 (P_\mu): \qquad \min_{\|A x - b\|_2 \le \sigma} \ \frac{1}{2} \|x\|^2_2 + \mu \| x\|_1,
\]
where $A \in \mathbb R^{m \times N}$, $b\in \mathbb R^m$, and $\sigma>0$ with $\|b\|_2 > \sigma$. Further, consider the max-formulation of $\ell_1$-norm, i.e., $\|x\|_1 = \max_{i=1, \ldots, 2^N} p^T_i x$, where each $p_i \in \big\{ (\pm 1, \pm 1, \ldots, \pm 1)^T \big\} \subset \mathbb R^N$; see \cite[Section 4.2]{MouShen_COCV18} for details.

\begin{lemma} \label{lem:P_mu_mini}
 A feasible point $x_*\in \mathbb R^N$ of $(P_\mu)$ is a minimizer of $(P_\mu)$ if and only if $\|A x_* - b\|_2=\sigma$ and
 \begin{equation} \label{eqn:P_mu_opt}
    \Big[ \, A u =0 \ \mbox{ or } \ g^T u < 0 \, \Big] \ \Rightarrow \ \Big (x_*^T u  + \mu \max_{i \in \Ical(x_*)} p_i ^T u \Big) \ge 0,
 \end{equation}
 where $g:=A^T (A x_* - b)$, and $\Ical(x_*):= \{ i \, | \, p^T_i x_* = \|x_* \|_1 \}$.
\end{lemma}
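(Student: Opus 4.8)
The plan is to read the lemma as the first-order optimality condition for minimizing the convex objective $\phi(x) := \tfrac12\|x\|_2^2 + \mu\|x\|_1$ over the convex feasible set $\Omega := \{x \in \mathbb R^N : \|Ax - b\|_2 \le \sigma\}$, organized around the directional derivative of $\phi$ together with an explicit description of the feasible directions at $x_*$. First I would record the directional derivative. The quadratic term is smooth, and by the max-formulation $\|x\|_1 = \max_i p_i^T x$ the directional derivative of the $\ell_1$-norm at $x_*$ in a direction $u$ is $\max_{i \in \Ical(x_*)} p_i^T u$ (the standard rule for the directional derivative of a finite pointwise maximum of affine functions, restricted to the active index set $\Ical(x_*)$). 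Hence
\[
\phi'(x_*; u) = x_*^T u + \mu \max_{i \in \Ical(x_*)} p_i^T u,
\]
which is exactly the bracketed quantity appearing in (\ref{eqn:P_mu_opt}).

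Next I would characterize the feasible directions. Assuming the constraint is active, $\|Ax_* - b\|_2 = \sigma$, the expansion
\[
\|A(x_* + t u) - b\|_2^2 = \sigma^2 + 2t\, g^T u + t^2 \|Au\|_2^2, \qquad g := A^T(Ax_* - b),
\]
shows that $x_* + tu \in \Omega$ for all sufficiently small $t > 0$ if and only if $2 g^T u + t\|Au\|_2^2 \le 0$ for small $t>0$, i.e. precisely when $g^T u < 0$, or when $Au = 0$ (in which case the image point, and hence the constraint value, is left unchanged). Since $Au = 0$ forces $g^T u = 0$, these two cases are disjoint and together identify the left-hand side ``$Au = 0$ or $g^T u < 0$'' of (\ref{eqn:P_mu_opt}) as \emph{exactly} the set of feasible directions at $x_*$.

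With these two ingredients the biconditional follows. For necessity, I would first establish that the constraint is active: since $\|b\|_2 > \sigma$ we have $0 \notin \Omega$, so any minimizer $x_*$ is nonzero; were $x_*$ strictly feasible, the ray $t \mapsto (1-t)x_*$ would remain feasible for small $t>0$ while strictly decreasing $\phi$ (its derivative at $t=0$ equals $-\|x_*\|_2^2 - \mu\|x_*\|_1 < 0$), contradicting optimality, whence $\|Ax_*-b\|_2 = \sigma$. Then for any $u$ satisfying the left-hand side, feasibility of $x_* + tu$ and optimality of $x_*$ give $\phi(x_*+tu) \ge \phi(x_*)$, and dividing by $t$ and letting $t \to 0^+$ yields $\phi'(x_*; u) \ge 0$. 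For sufficiency, given the active constraint and (\ref{eqn:P_mu_opt}), I take any $x \in \Omega$ and set $u = x - x_*$: convexity of $\Omega$ keeps the whole segment in $\Omega$, which forces $g^T u \le 0$, and in the boundary case $g^T u = 0$ forces $Au = 0$; in either case $u$ lands in the left-hand side set, so $\phi'(x_*; u) \ge 0$, and the convexity inequality $\phi(x) - \phi(x_*) \ge \phi'(x_*; x - x_*) \ge 0$ shows $x_*$ is a minimizer.

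The main obstacle is getting the description of the feasible directions exactly right, because the constraint is a convex quadratic rather than linear. In particular the degenerate case $g = 0$ (where $Ax_*$ is the metric projection of $b$ onto $R(A)$ and the feasible set collapses to the affine slice $\{x : Ax = Ax_*\}$) must be accommodated, and there the naive tangent-cone condition $g^T u \le 0$ would be vacuous and hence wrong. Using the precise set ``$Au = 0$ or $g^T u < 0$'' is what makes the equivalence valid in all cases, and verifying that every chord direction $x - x_*$ with $x \in \Omega$ necessarily lies in this set (the step that closes sufficiency) is where the argument must be handled with care.
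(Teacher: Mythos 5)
Your proposal is correct and follows essentially the same route as the paper's proof: both rest on the expansion $\|A(x_*+tu)-b\|_2^2=\sigma^2+2t\,g^Tu+t^2\|Au\|_2^2$ to identify the feasible directions at $x_*$ as exactly $\{u : Au=0\}\cup\{u : g^Tu<0\}$, and on the directional derivative $x_*^Tu+\mu\max_{i\in\Ical(x_*)}p_i^Tu$ of the objective (the paper packages this as local optimality of $u_*=0$ for the auxiliary problem $(\wt P_\mu)$). The only differences are cosmetic: you prove the activity $\|Ax_*-b\|_2=\sigma$ directly by a scaling argument where the paper cites \cite{ZhangYC_JOTA15}, and you spell out via the convexity inequality $\phi(x)-\phi(x_*)\ge\phi'(x_*;x-x_*)$ the step the paper labels ``easy to see.''
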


\begin{proof}
 It follows from a similar argument of \cite[Lemma 4.2(3)]{ZhangYC_JOTA15} that a minimizer $x_*$ of $(P_\mu)$ satisfies $\| A x_* - b \|_2 = \sigma$. The rest of the proof resembles that of \cite[Theorem 3.3]{MouShen_COCV18}; we present its proof for completeness. Since $(P_\mu)$ is a convex program, it is easy to see that $x_*$ is a minimizer of $(P_\mu)$ if and only if $u_*=0$ is a local minimizer of the following problem:
 \[
   (\wt P_\mu): \quad  \min_{u \in \mathbb R^N} \, \Big (x_*^T u  + \mu \max_{i \in \Ical(x_*)} p_i ^T u \Big), \quad \mbox{ subject to } \quad g^T u + \frac{1}{2} \|A u \|^2_2 \le 0.
 \]
 In what follows, we show that the latter holds if and only if the implication (\ref{eqn:P_mu_opt}) holds.
 Let $J(u):=x_*^T u + \mu \max_{i \in \Ical(x_*)} p_i^T u$.
 ``If'': Let $\mathcal U$ be a neighborhood of $u_*=0$. For any $u \in \mathcal U$ satisfying $g^T u + \frac{1}{2} \|A u \|^2_2 \le 0$, either $A u =0$ or $Au \ne 0$. For the latter, we have $g^T u <0$. Hence, in both cases, we deduce from (\ref{eqn:P_mu_opt}) that $J( u)  \ge 0=J(u_*)$. This shows that $u_*=0$ is a local minimizer of $(\wt P_\mu)$. ``Only If'': suppose $u_*=0$ is a local minimizer of $(\wt P_\mu)$. For any $u$ with $A u =0$, we have $g^T u=0$ such  that $v:=\beta u$ satisfies $g^T v + \frac{1}{2}\|A v \|^2_2=0$ for any $\beta > 0$. Hence, $\beta u$ is a locally feasible point of $(\wt P_\mu)$ for all small $\beta >0$ such that $J(\beta u) \ge J(u_*)=0$ for all small $\beta >0$. Since $J(\beta u)=\beta J(u)$ for all $\beta \ge 0$, we have $J(u) \ge 0$. Next consider a vector $u$ with $g^T u<0$. Clearly, $g^T( \beta u) + \frac{1}{2} \|A \beta u \|^2_2 < 0$ for all small $\beta>0$ so that $\beta u$ is a locally feasible point of $(\wt P_\mu)$. By a similar argument, we have $J(u) \ge 0$.
\end{proof}

\begin{proposition} \label{prop:BPDN_exact_reg}
 The problem $(BPDN_h): \min_{ \| A x - b\|_2 \le \sigma, \, \|x \|_1 \le f_*} \| x\|^2_2$ with $\|b\|_2>\sigma$ has a Lagrange multiplier if and only if there exist a constant $\mu \ge 0$ and a minimizer $x_*$ of $(BPDN_h)$ such that %%%the following hold:
 \begin{itemize}
  %%% \item [(i)]  $\| A x_*- b\|_2 = \sigma$;
   \item [(i)] There exist $w \in \mathbb R^{|\Ical(x_*)|}_+$ with $\mathbf 1^T w =1$ and $v \in \mathbb R^m$ such that $x^* + \mu \sum_{i \in \Ical(x_*)} w_i p_i + A^T v=0$; and
   \item [(ii)] There exist $w' \in \mathbb R^{|\Ical(x_*)|}_+$ with $\mathbf 1^T w'=1$ and a constant $\gamma>0$ such that $(\mathbf 1^T w') x^* + \mu \sum_{i \in \Ical(x_*)} w'_i p_i + \gamma g=0$,
 \end{itemize}
 where $\mathbf 1$ denotes the vector of ones, $g:=A^T (A x_* - b)$, and $\Ical(x_*):= \{ i \, | \, p^T_i x_* = \|x_* \|_1 \}$. Furthermore, if $A$ has full row rank, then $(BPDN_h)$ has a Lagrange multiplier if and only if there exist a constant $\mu \ge 0$ and a minimizer $x_*$ of $(BPDN_h)$ such that
  \begin{itemize}
     \item [(ii')] There exist $\wh w \in \mathbb R^{|\Ical(x_*)|}_+$ with $\mathbf 1^T \wh w=1$ and a constant $\wh \gamma>0$ such that $x^* + \mu \sum_{i \in \Ical(x_*)} \wh w_i p_i + \wh \gamma g=0$.
  \end{itemize}
\end{proposition}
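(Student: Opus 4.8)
The plan is to recognize $(BPDN_h)$ as the auxiliary problem $(P_h)$ of the exact-regularization framework with $f(x)=\|x\|_1$, $h(x)=\|x\|_2^2$, and $\Pcal=\{x : \|Ax-b\|_2\le\sigma\}$, and then to turn the abstract Lagrange-multiplier condition into the concrete KKT system. By Corollary~\ref{coro:P_h_Lagrange}, $(BPDN_h)$ has a Lagrange multiplier $\mu_*\ge0$ if and only if there are $\mu\ge0$ and a minimizer $x_*$ of $(BPDN_h)$ such that $x_*$ minimizes $\min_{x\in\Pcal}\|x\|_2^2+\mu(\|x\|_1-f_*)$. Absorbing the additive constant $-\mu f_*$ and rescaling the objective by $\tfrac12$ (which merely replaces $\mu$ by $\mu/2$ and is harmless since $\mu$ ranges over all of $[0,\infty)$), this says exactly that $x_*$ solves the problem $(P_\mu)$ introduced before Lemma~\ref{lem:P_mu_mini}. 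Since $x_*\in\Pcal$ is feasible, Lemma~\ref{lem:P_mu_mini} applies and recovers both $\|Ax_*-b\|_2=\sigma$ and the optimality implication (\ref{eqn:P_mu_opt}); the whole proposition thus reduces to dualizing (\ref{eqn:P_mu_opt}).

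The core step is to decompose and then dualize (\ref{eqn:P_mu_opt}). Writing $J(u):=x_*^Tu+\mu\max_{i\in\Ical(x_*)}p_i^Tu$, which is sublinear, and using that its hypothesis ``$Au=0$ or $g^Tu<0$'' is a disjunction, (\ref{eqn:P_mu_opt}) holds iff $J\ge0$ separately on the subspace $\ker A$ (call this condition (A)) and on the open half-space $\{u:g^Tu<0\}$ (condition (B)). The key computation is the subdifferential at the origin, $\partial J(0)=x_*+\mu\,\mathrm{conv}\{p_i:i\in\Ical(x_*)\}=\{x_*+\mu\sum_{i\in\Ical(x_*)}w_ip_i:\ w\ge0,\ \mathbf 1^Tw=1\}$, which is precisely the family of left-hand vectors in (i) and (ii). Because $J$ is polyhedral and both $\ker A$ and the half-space are polyhedral, no constraint qualification is needed for subgradient optimality. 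For (A), $0$ minimizes $J$ over $\ker A$ iff $0\in\partial J(0)+(\ker A)^\perp=\partial J(0)+\mathrm{range}(A^T)$, which is exactly (i). For (B), I would first use $g\ne0$ together with the continuity of $J$ to replace the open half-space by its closure $\{u:g^Tu\le0\}$, then express the optimality of $0$ through the normal cone $\{\gamma g:\gamma\ge0\}$ at $0$, i.e. $x_*+\mu\sum_iw_i'p_i+\gamma g=0$ for some admissible $w'$ and some $\gamma\ge0$; this is (ii) up to the strictness of $\gamma$.

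Upgrading $\gamma\ge0$ to $\gamma>0$ is where I expect the only real friction, and it is where the standing hypotheses enter. If $\gamma=0$ were forced, then $0\in\partial J(0)$, i.e. $0\in x_*+\mu\,\partial\|x_*\|_1$, so $x_*$ would be the unique unconstrained minimizer of $\tfrac12\|x\|_2^2+\mu\|x\|_1$; but that minimizer is $0$, contradicting $x_*\ne0$, which holds because $\|b\|_2>\sigma$ makes $x=0$ infeasible. Hence $\gamma>0$, giving (ii). The same step needs $g\ne0$ so that the open-to-closed passage is legitimate; in the genuine BPDN regime this is the non-degeneracy that must be assumed (if $g=A^T(Ax_*-b)=0$ then $x_*$ is a least-squares point and the model collapses to a basis-pursuit instance, a separate case), and it is exactly this hypothesis that I expect to require the most care to state cleanly.

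Finally, for the full-row-rank refinement to (ii$'$): here $A^T$ is injective and $Ax_*-b\ne0$ (since $\|Ax_*-b\|_2=\sigma>0$ by Proposition~\ref{prop:soln_property}(iii)), so $g\ne0$ is automatic and the non-degeneracy worry disappears. Moreover (ii) is literally of the form (i), because $\gamma g=A^T\!\big(\gamma(Ax_*-b)\big)\in\mathrm{range}(A^T)$, so taking $v:=\gamma(Ax_*-b)$ and $w:=w'$ shows (ii)$\Rightarrow$(i). Thus condition (i) is subsumed once $g\ne0$ is guaranteed, and the single equation (ii$'$) (which is (ii) with $\widehat w,\widehat\gamma$ in place of $w',\gamma$) captures the existence of a Lagrange multiplier. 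I would therefore present the full-row-rank case as the clean endpoint in which $g\ne0$ holds by injectivity of $A^T$ and (i) need not be recorded separately.
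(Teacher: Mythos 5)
Your proof is correct and follows essentially the same route as the paper's: reduce via Corollary~\ref{coro:P_h_Lagrange} and Lemma~\ref{lem:P_mu_mini} to the implication (\ref{eqn:P_mu_opt}), split it into the kernel case and the half-space case, and dualize each --- you phrase the dualization through $\partial J(0)$ and normal cones where the paper applies the Theorem of the Alternative to the linear systems (I) and (II), but this is the same duality in different clothing. Your positivity argument for $\gamma$ (forced by $x_*\ne 0$) and the full-row-rank reduction to (ii$'$) also match the paper's; if anything you are more careful, since you explicitly flag that the half-space step requires $g\ne 0$, a degenerate case the paper's proof passes over silently.
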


\begin{proof}
  It follows from Corollary~\ref{coro:P_h_Lagrange} %%%and Remark~\ref{remark:P_h_Lagrange}
   that $(BPDN_h)$ has  a Lagrange multiplier if and only if there exist a constant $\mu \ge 0$ and a minimizer $x_*$ of $(BPDN_h)$ such that $x_*$ is a minimizer of $(P_\mu)$. Note that any minimizer $x_*$ of $(BPDN_h)$ satisfies $\| A x_* -b\|_2 = \sigma$ such that $x_* \ne 0$ in light of $\|b\|_2 > \sigma$.
  By Lemma~\ref{lem:P_mu_mini}, we also deduce that $x_*$ is a  minimizer of $(P_\mu)$ if and only if $\| A x_* - b\|_2 = \sigma$ and the implication (\ref{eqn:P_mu_opt}) holds. Notice that the implication holds if and only if both the following linear inequalities have no solution:
  \[
     \mbox{(I)}: \quad A u = 0, \ \  \max_{i \in \Ical(x_*)} \big (x_* + \mu  p_i \big)^T u < 0; \qquad
     \mbox{(II)}: \quad g^T u < 0, \ \  \max_{i \in \Ical(x_*)} \big (x_* + \mu p_i \big)^T u < 0.
  \]
  By the Theorem of Alternative, we see that the inconsistency of the inequality (I) is equivalent to the existence of $(\wt w, \wt v)$ with $0 \ne \wt w \ge 0$ such that $\sum_{i \in \Ical(x_*)} \wt w_i (x_* + \mu p_i) + A^T \wt v=0$. Letting $w:= \wt w/(\mathbf 1^T \wt w)$ and $v:= \wt v/(\mathbf 1^T \wt w)$, we obtain condition (i). Similarly, the inconsistency of the inequality (II) is equivalent to the existence of $(\wt \gamma, \wt w')$ with $0 \ne (\wt \gamma, \wt w')\ge 0$ such that $\sum_{i \in \Ical(x_*)} \wt w'_i (x_* + \mu p_i) + \wt \gamma g=0$.
  Moreover, we deduce that $\wt \gamma >0$, since otherwise, we must have $0 \ne \wt w' \ge 0$ such that $0=x^T_* \sum_{i \in \Ical(x_*)} \wt w'_i (x_* + \mu p_i) = (\mathbf 1^T \wt w') ( \|x_*\|^2_2 + \mu \|x_*\|_1)$, where we use $p^T_i x_*=\|x_*\|_1$ for each $i \in \Ical(x_*)$, yielding a contradiction to $x_* \ne 0$.  Hence, by suitably scaling, we conclude that the inconsistency of the inequality (II) is equivalent to condition (ii). This completes the proof of the first part of the proposition.

 Suppose $A$ has full row rank. Then condition (i) holds trivially. Furthermore, since $A x_* - b \ne 0$ for a minimizer $x_*$ of $(BPDN_h)$, $g:=A^T (A x_* - b)$ is a nonzero vector. Hence, $w'$ in condition (ii) must be nonzero as $\gamma g \ne 0$. Setting $\wh w:= w'/(\mathbf 1^T w')$ and $\wh \gamma:=\gamma/(\mathbf 1^T w')$, we obtain condition (ii'), which is equivalent to condition (ii).
\end{proof}

By leveraging Proposition~\ref{prop:BPDN_exact_reg}, we construct the following example which shows that in general, the standard BPDN (\ref{eqn:BP_denoising01}) with the $\ell_1$-norm penalty is {\em not} exactly regularized by $h(x)=\| x\|^2_2$.

\begin{example} \rm
   Let $A =[ D \ D \ \cdots \ D]\in \mathbb R^{2\times N}$ with $N=2r$ for some $r\in \mathbb N$, where $D=\mbox{diag}(1, \beta) \in \mathbb R^{2\times 2}$ for a positive constant $\beta$. As before, we partition a vector $x \in \mathbb R^N$ as $x=(x^1, \ldots, x^r)$ where each $x^i\in \mathbb R^2$. Further, let $b=(b_1, b_2)^T \in \mathbb R^2$ and $\sigma=1$. We assume that $b \ge \mathbf 1$, which is a necessary and sufficient condition for $\| v - b\|_2 \le \sigma \Rightarrow v \ge 0$.

   We first consider the convex minimization problem: $\min_{u \in \mathbb R^2} \|u \|_1$ subject to $\|D u - b \|_2 \le 1$, which has a unique minimizer $u_*$ as $D$ is invertible for any $\beta >0$. Further, we must have $\|D u_* - b\|_2 =1$ and $u_* > 0$. In light of this, the necessary and sufficient optimality conditions for $u_*$ are: there exists $\lambda \in \mathbb R_+$ such that $\partial \| u_*\|_1 + \lambda D^T (D u_* - b)=0$, and $\|D u_* - b\|^2_2 =1$. Since $u_*>0$, we have $\lambda>0$ and the first equation becomes $\mathbf 1 + \lambda D^T (D u_* - b)=0$, which further gives rise to $Du_*= b - \frac{1}{\lambda} D^{-1} \mathbf 1$. Substituting it into the equation $\|D u_* - b\|_2 =1$, we obtain $\lambda = \frac{\sqrt{1+\beta^2}}{\beta}$. This yields $u_*=( b_1 - \frac{1}{\lambda}, \frac{1}{\beta} (b_2- \frac{1}{\beta\lambda }))^T$. Note that for all $\beta>0$, $0<\frac{1}{\lambda}<1$ and $\frac{1}{\beta \lambda} = \frac{1}{\sqrt{1+\beta^2}}$ so that $0<\frac{1}{\beta\lambda}<1$. Hence, $u_* >0$ in view of $b \ge \mathbf 1$.

   It can be shown that the solution set of the BPDN is given by
   \begin{align*}
      \mathcal H & = \{x_*=(x^1_*, \ldots, x^r_*) \, | \, \|x_*\|_1 = \| u_* \|_1, \ A x_* = Du_* \}
        = \{(x^1_*, \ldots, x^r_*) \, | \, \sum^r_{i=1} \|x^i_*\|_1 = \| u_* \|_1, \ \ \sum^r_{i=1} x^i_* = u_* \}  \\
        & = \{x_*=(x^1_*, \ldots, x^r_*) \, | \,  x^i_* = \lambda_i u_*,  \ \sum^r_{i=1} \lambda_i = 1, \ \lambda_i \ge 0, \forall \, i \}.
   \end{align*}
  Therefore, it is easy to show that the regularized BPDN with $h(x)=\|x \|^2_2$ has the unique minimizer $x_*=(x^i_*)$ with $x^*_i = \frac{u_*}{r}$ for each $i=1, \ldots, r$. Since $u_*>0$, we have $x_*>0$ such that $\Ical(x_*)$ is singleton with the single vector $p=\mathbf 1$. Since $A$ has full row rank, it follows from Proposition~\ref{prop:BPDN_exact_reg} that $(BPDN_h)$ has a Lagrange multiplier if and only if there exist constants $\mu \ge 0$, $\gamma>0$  such that $x_* + \mu  p + \gamma g=0$ for the unique minimizer $x_*$, where $p=\mathbf 1$ and $g=A^T (A x_* - b) = A^T (D u_* - b) = -\frac{1}{\lambda} \mathbf 1$, where $\lambda = \sqrt{1+\frac{1}{\beta^2}}$. Since $x_*=\frac{1}{r}(u_*, \ldots, u_*)$,  constants $\mu \ge 0$ and $\gamma>0$ exist if and only if $(u_*)_1=(u_*)_2$ or equivalently  $\beta (b_1 -\frac{1}{\lambda})=b_2 - \frac{1}{\beta \lambda}$. The latter is further equivalent to $b_2 = \beta b_1 + \frac{1-\beta^2}{\sqrt{1+\beta^2}}$. Hence, for any $\beta>0$, $(BPDN_h)$ has a Lagrange multiplier if and only if $b$ satisfies $b_2 = \beta b_1 + \frac{1-\beta^2}{\sqrt{1+\beta^2}}$ and $b \ge \mathbf 1$. The set of such $b$'s has zero measure in $\mathbb R^2$. For instance, when $\beta=1$, $(BPDN_h)$ has a Lagrange multiplier if and only if $b = \theta \cdot \mathbf 1$ for all $\theta \ge 1$.
  Thus the BPDN is {\em not} exactly regularized by $h(x)=\|x\|^2_2$ in general. \hspace{10pt} $\square$
\end{example}

%-------------------------------------------------------------------------------
%
\subsection{Exact Regularization of Grouped BP Problem  Arising From Group LASSO} \label{subsect:exact_reg_group_BP}

%%\noindent{\bf Exact Regularization II}:

Motivated by  the group LASSO (\ref{eqn:group_Lasso}), we investigate exact regularization of the following BP-like problem: $\min \sum^p_{i=1} \|x_{\Ical_i} \|_2$ subject to $A x = b$, where $\{\Ical_i\}^p_{i=1}$ forms a disjoint union of $\{1, \ldots, N\}$. We call this problem the {\it grouped basis pursuit} or grouped BP. Here we set $\lambda_i$'s in the original group LASSO formulation (\ref{eqn:group_Lasso}) as one, without loss of generality. It is shown below that its exact regularization may fail.

\begin{example} \rm \label{example:grouped_BP}
Consider the grouped BP: $\min_{x, y \in \mathbb R^2} \|x\|_2+\|y \|_2$ subject to $ \begin{pmatrix} x_1 \\ x_2 \end{pmatrix} + \begin{pmatrix} \gamma y_1 \\ \beta y_2 \end{pmatrix} = b$, where $b=(b_1, b_2)^T\in \mathbb R^2$ is nonzero. Let $\gamma=0$ and $\beta>1$. Hence, $x^*_1= b_1$, $y^*_1=0$, and the grouped BP is reduced to $\min_{x_2, y_2} \sqrt{b^2_1 + x^2_2} + | y_2 |$ subject to $x_2 +\beta y_2 = b_2$, which is further equivalent to
\[
   (R_1): \quad \min_{x_2 \in \mathbb R} J(x_2):= \sqrt{b^2_1 + x^2_2} + \frac{|b_2 - x_2 |}{\beta}.
\]
It is easy to show that if $b_2 > \frac{b_1}{\sqrt{\beta^2-1}}>0$, then the above reduced problem attains the unique minimizer $x^*_2 = \frac{b_1}{\sqrt{\beta^2-1}}$ which satisfies $\nabla J(x^*_2)=0$. Hence, when $b_2 > \frac{b_1}{\sqrt{\beta^2-1}}>0$, the unique solution of the grouped BP is given by $x^*=(b_1, \frac{b_1}{\sqrt{\beta^2-1}})^T$ and $y^*=(0,  (b_2 - \frac{b_1}{\sqrt{\beta^2-1}})/\beta)^T$. Now consider the regularized problem for $\alpha>0$: $\min_{x, y \in \mathbb R^2} \|x\|_2+\|y \|_2 + \frac{\alpha}{2}\big( \|x\|^2_2 + \|y\|^2_2\big)$ subject to $ \begin{pmatrix} x_1 \\ x_2 \end{pmatrix} + \begin{pmatrix} 0 \\ \beta y_2 \end{pmatrix} = b$. Similarly, we must have $x^*_1= b_1$ and $y^*_1=0$ such that the reduced problem is given by
\[
 (R_2): \quad  \min_{x_2 \in \mathbb R} \sqrt{b^2_1 + x^2_2} + \frac{|b_2 - x_2 |}{\beta} + \frac{\alpha}{2}\Big( b^2_1 + x^2_2 + \frac{1}{\beta^2} (b_2 - x_2)^2 \Big).
\]
We claim that if $b_2 > \frac{b_1}{\sqrt{\beta^2-1}}>0$ with $b_2 \ne \frac{1+\beta^2}{\sqrt{\beta^2-1}} b_1$, then the exact regularization fails for any $\alpha>0$. We show this claim by contradiction. Suppose the exact regularization holds for some positive constant $\alpha$. Hence, $x^*_2= \frac{b_1}{\sqrt{\beta^2-1}}$ is the solution to the  reduced problem $(R_2)$. Since $\nabla J(x^*_2)=0$, we have
$\displaystyle
  \alpha \Big( x^*_2 + \frac{1}{\beta^2} \big( x^*_2 - b_2 \big) \Big) = 0.
$
This leads to $x^*_2 = \frac{b_2}{1+\beta^2}$, yielding a contradiction to $b_2 \ne \frac{1+\beta^2}{\sqrt{\beta^2-1}} b_1$. Hence, the exact regularization fails. \hspace{5in} $\square$
\end{example}

In spite of the failure of exact regularization in Example~\ref{example:grouped_BP}, it can be shown that the exact regularization holds for the following cases: (i) $\max(|\gamma|, |\beta|) < 1$; (ii) $\min(|\gamma|, |\beta|) > 1$;  (iii) $\gamma=0$, $\beta>1$, $b_1=0$, and $b_2 \ne 0$; and (iv) $\gamma=0$, $\beta=1$, and $b_1 \ne 0$. Especially, the first two cases hint that  the spectra of $A_{\bullet \Ical_i}$'s may determine exact regularization. Inspired by this example, we present certain sufficient conditions for which the exact regularization holds.

\begin{lemma}
 Consider a nonzero $b \in \mathbb R^m$ and a column partition $\{ A_{\bullet\Ical_i} \}^p_{i=1}$ of a matrix $A \in \mathbb R^{m\times N}$, where $\{ \Ical_i \}^p_{i=1}$ form a disjoint union of $\{1, \ldots, N\}$. Suppose $A_{\bullet\Ical_1}$ is invertible, $A^{-1}_{\bullet\Ical_1} A_{\bullet\Ical_i}$ is an orthogonal matrix for each $i=1, \ldots, s$, and
 $\| (A_{\bullet\Ical_i})^T (A_{\bullet\Ical_1})^{-T} A^{-1}_{\bullet\Ical_1} b\|_2 < \| A_{\bullet\Ical_1}^{-1} b\|_2$  for each $i=s+1, \ldots, p$.
%
% (ii) $\| A^{-1}_{\Ical_1} A_{\Ical_i} \|_2 = 1$ and $\| A^{-1}_{\Ical_1} A_{\Ical_i} b\|_2 < \|b\|_2$  for %each $i=k+1, \ldots, s$; and (iii)  $\| A^{-1}_{\Ical_1} A_{\Ical_i} \|_2 < 1$ for each $i=s+1, \ldots, p$.
%
 Then the exact regularization holds.
\end{lemma}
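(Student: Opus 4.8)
The plan is to reduce the claim, via the Friedlander--Tseng characterization \cite{FriedlanderTesng_SIOPT07} and Corollary~\ref{coro:P_h_Lagrange}, to the explicit construction of a single nonnegative Lagrange multiplier for the problem $(P_h)$ associated with the grouped BP, where here $f(x)=\sum_{i=1}^p\|x_{\Ical_i}\|_2$, $h(x)=\|x\|_2^2$, and $\Pcal=\{x\,|\,Ax=b\}$. Since $f$ is a sum of Euclidean norms and hence not piecewise affine, Proposition~\ref{prop:exact_reg_BP_like} does not apply and the multiplier must be produced by hand. Throughout I would write $B:=A_{\bullet\Ical_1}$ (invertible, so $|\Ical_1|=m$), $w:=B^{-1}b\neq 0$, and $\widehat w:=w/\|w\|_2$. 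The orthogonality hypothesis then reads $A_{\bullet\Ical_i}=B\,Q_i$ with $Q_i$ orthogonal for $i\le s$ (with $Q_1=I$), while the spectral hypothesis reads $\rho_j:=\|A_{\bullet\Ical_j}^T B^{-T}w\|_2<\|w\|_2$ for $j>s$.

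First I would identify the solution set of the grouped BP through a dual certificate. The KKT condition for $\min_x\sum_i\|x_{\Ical_i}\|_2$ subject to $Ax=b$ is the existence of $y\in\mathbb R^m$ with $A_{\bullet\Ical_i}^T y\in\partial\|x_{\Ical_i}\|_2$ for every $i$, i.e. $\|A_{\bullet\Ical_i}^T y\|_2\le1$ for all $i$, with equality and $A_{\bullet\Ical_i}^T y=x_{\Ical_i}/\|x_{\Ical_i}\|_2$ on active blocks. Taking $y_0:=B^{-T}\widehat w$, the identity $A_{\bullet\Ical_i}^T y_0=Q_i^T\widehat w$ gives $\|A_{\bullet\Ical_i}^T y_0\|_2=1$ for $i\le s$, whereas the spectral hypothesis gives $\|A_{\bullet\Ical_j}^T y_0\|_2=\rho_j/\|w\|_2<1$ for $j>s$. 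Thus $y_0$ certifies optimality of the concentrated point $x_{\Ical_1}=w$ (all other blocks zero), and complementary slackness with a strict inequality forces $x_{\Ical_j}=0$ for every $j>s$ in any minimizer. Using $A_{\bullet\Ical_i}=BQ_i$, the triangle inequality, and $\|Q_i^T(\cdot)\|_2=\|\cdot\|_2$, the minimizers are exactly those with $x_{\Ical_i}=\lambda_i Q_i^T\widehat w$, $\lambda_i\ge0$, $\sum_{i\le s}\lambda_i=\|w\|_2$, and $x_{\Ical_j}=0$ for $j>s$. Minimizing $h(x)=\sum_{i\le s}\lambda_i^2$ over this set yields the unique $(P_h)$-minimizer $x_*$ with $\lambda_i=\|w\|_2/s=:c$.

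Then I would verify the condition of Corollary~\ref{coro:P_h_Lagrange}: find $\mu\ge0$ so that $x_*$ minimizes the strictly convex problem $\min_{Ax=b}\|x\|_2^2+\mu\sum_i\|x_{\Ical_i}\|_2$. Its KKT stationarity on an active block $i\le s$ reads $2x_{\Ical_i}+\mu\,x_{\Ical_i}/\|x_{\Ical_i}\|_2=A_{\bullet\Ical_i}^T y$; substituting $x_{\Ical_i}=cQ_i^T\widehat w$ and $A_{\bullet\Ical_i}=BQ_i$, this holds simultaneously for all $i\le s$ with the single choice $y=(2c+\mu)B^{-T}\widehat w$, and feasibility $\sum_{i\le s}A_{\bullet\Ical_i}x_{\Ical_i}=b$ pins $c=\|w\|_2/s$ independently of $\mu$. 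For the inactive blocks the requirement is $\|A_{\bullet\Ical_j}^T y\|_2\le\mu$, i.e. $(2c+\mu)\rho_j/\|w\|_2\le\mu$; writing $r_j:=\rho_j/\|w\|_2<1$ this is $\mu\ge 2c\,r_j/(1-r_j)$, so any $\mu\ge\max_{j>s}2c\,r_j/(1-r_j)$ makes $x_*$ satisfy the full KKT system and hence minimize the penalized problem. By Corollary~\ref{coro:P_h_Lagrange} and \cite{FriedlanderTesng_SIOPT07}, $(P_h)$ then admits a Lagrange multiplier and the grouped BP is exactly regularized by $h$.

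The main obstacle is the second step: the orthogonality hypothesis is exactly what collapses the active blocks onto the common direction $\widehat w$ (through $Q_iQ_i^T=I$ and the triangle inequality) and lets a single dual vector certify them all at norm one at once, while the strict inequality $\rho_j<\|w\|_2$ is precisely what drives the inactive blocks strictly inside the dual unit ball and, in the third step, strictly below the level $\mu$ once $\mu$ is large. The delicate points are confirming that the concentrated direction is genuinely forced (strictness in complementary slackness) and that the penalized minimizer equals $x_*$ rather than merely converging to it as $\mu\to\infty$; both are secured by the exactness of the KKT identity $y=(2c+\mu)B^{-T}\widehat w$ together with strict convexity of the penalized objective.
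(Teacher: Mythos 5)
Your proof is correct, and it reaches the paper's conclusion by a slightly different logical route built on the same computation. The paper argues directly: after normalizing $A_{\bullet\Ical_1}=I$ (so $w=b$), it exhibits the same balanced point $x^*$ with $x^*_{\Ical_i}=(A_{\bullet\Ical_i})^Tb/s$ for $i\le s$ and zero elsewhere, verifies it solves the grouped BP with the certificate $\lambda=-b/\|b\|_2$, and then writes down the KKT system of the \emph{regularized} problem $\min_x \sum_i\|x_{\Ical_i}\|_2+\frac{\alpha}{2}\|x\|_2^2$ subject to $Ax=b$, checking that $\wh\lambda=-\bigl(\tfrac{1}{\|b\|_2}+\tfrac{\alpha}{s}\bigr)b$ certifies optimality of the \emph{same} $x^*$ for all sufficiently small $\alpha>0$; since the regularized objective is strictly convex, $x^*$ is its unique minimizer and exact regularization follows without invoking the Friedlander--Tseng machinery. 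You instead pass through $(P_h)$ and Corollary~\ref{coro:P_h_Lagrange}, which forces you to first characterize the entire solution set of the grouped BP (dual certificate $y_0=B^{-T}\wh w$, complementary slackness on the blocks $j>s$, then the triangle-inequality/equality-case argument on the blocks $i\le s$) so as to identify the $(P_h)$-minimizer before producing the multiplier $\mu$. The two dual certificates are essentially reparametrizations of each other (your $y=(2c+\mu)B^{-T}\wh w$ versus the paper's $\wh\lambda$, with $\mu$ playing the role of $2/\alpha$), and in both arguments the strict inequality $\rho_j<\|w\|_2$ is exactly what supplies the slack on the inactive blocks. What your route buys is the explicit solution set and an explicit threshold $\mu\ge\max_{j>s}2c\,r_j/(1-r_j)$ (equivalently an explicit $\ol\alpha$); what the paper's buys is brevity, since it never needs to know the solution set, only that the one balanced point survives the perturbation.
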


\begin{proof}
Since $A_{\bullet\Ical_1}^{-1} A b = A^{-1}_{\bullet\Ical_1} b$,
we may assume, without loss of generality, that $A_{\bullet\Ical_1}$ is the identity matrix. Hence, $A_{\bullet\Ical_i}$ is an orthogonal matrix for  $i=2, \ldots, s$.
 We claim that $x^*=(\frac{(A_{\bullet\Ical_1})^T b}{s}, \cdots, \frac{(A_{\bullet\Ical_s})^T b}{s}, 0, \ldots, 0)$ is an optimal solution to the grouped BP-like problem. Clearly, it satisfies the equality constraint. Besides, it follows from the KKT conditions that there exists a Lagrange multiplier $\lambda \in \mathbb R^m$ such that
 \[
    \frac{x^*_{\Ical_i}}{\|x^*_{\Ical_i}\|_2} + (A_{\bullet\Ical_i})^T \lambda =0, \quad \forall \ i=1, \ldots, s; \qquad 0 \in B_2(0, 1) + (A_{\bullet\Ical_j})^T \lambda, \quad \forall \ j=s+1, \ldots, p.
 \]
 Note that (i) $\lambda = -b/\|b\|_2$; and (ii) for each $j=s+1, \ldots, p$, $\|(A_{\bullet\Ical_j})^T \lambda \|_2 <1$ in view of $\|(A_{\bullet\Ical_j})^T b\|_2 < \|b\|_2$. Hence, $x^*$ is indeed a minimizer. Now consider the regularized grouped BP-like problem with the parameter $\alpha>0$. We claim that $x^*=(\frac{(A_{\bullet\Ical_1})^T b}{s}, \ldots, \frac{(A_{\bullet\Ical_s} )^T b}{s}, 0, \ldots, 0)$ is an optimal solution of the regularized problem for any sufficiently small $\alpha>0$. To see this, the KKT condition is given by
  \[
    \frac{x^*_{\Ical_i}}{\|x^*_{\Ical_i}\|_2} + \alpha x^*_{\Ical_i} +  (A_{\bullet\Ical_i})^T \wh\lambda =0, \quad \forall \ i=1, \ldots, s; \qquad 0 \in B_2(0, 1) + (A_{\bullet\Ical_j})^T \wh \lambda, \quad \forall \ j=s+1, \ldots, p.
 \]
 Hence, $\wh \lambda = -\big(\frac{1}{\|b\|_2} + \frac{\alpha }{s}\big) b$ such that $\|\wh \lambda\|_2 = 1 + \frac{\alpha}{s} \|b\|_2$. Since $\|(A_{\bullet\Ical_j})^T b\|_2< \|b\|_2$ for each $j=s+1, \ldots, p$, we have $\|(A_{\bullet\Ical_j})^T  \wh \lambda\|_2 = \big(\frac{1}{\|b\|_2} + \frac{\alpha }{s}\big) \|(A_{\bullet\Ical_j})^T   b \|_2  \le 1, \forall \, j=s+1, \ldots, p$ for all  sufficiently small $\alpha>0$. Hence, $x^*$ is a solution of the regularized problem for all small $\alpha>0$, and exact regularization holds.
\end{proof}

If the exact knowledge of $b \ne 0$ is unknown, the condition that $\| A^T_{\bullet\Ical_i} A^{-T}_{\bullet\Ical_1} A^{-1}_{\Ical_1} b\|_2 < \| A^{-1}_{\bullet\Ical_1} b\|_2$  for each $i=s+1, \ldots, p$ can be replaced by the following condition: $\| A^T_{\bullet\Ical_i} A^{-T}_{\bullet\Ical_1}\|_2 < 1$  for each $i=s+1, \ldots, p$.

%-----------------------------------------------------------
%
\section{Dual Problems: Formulations and Properties} \label{sect:dual_problems}

We develop dual problems of the regularized BP as well as those of the LASSO and BPDN  in this section. These dual problems and their properties form a foundation for the development of column partition based distributed algorithms. As before, $\{ \Ical_i \}^p_{i=1}$ is a disjoint union of $\{1, \ldots, N\}$.

Consider the problems $(P_1)$-$(P_3)$ given by (\ref{eqn:gen_BP})-(\ref{eqn:gen_BPDN}), where $E \in \mathbb R^{r\times N}$ and $\|\cdot \|_\star$ is a general norm on $\mathbb R^r$. Let $\|\cdot \|_\diamond$ be the dual norm of $\|\cdot \|_\star$, i.e.,
$
   \| z \|_\diamond  :=  \sup \big \{ z^T v \, | \, \| v \|_\star \le 1 \big \}, \ \forall \, z \in \mathbb R^r.
$
As an example, the dual norm of the $\ell_1$-norm is the $\ell_\infty$-norm. When $\|x\|_\star :=\sum^p_{i=1} \|x_{\Ical_i}\|_2$ arising from the group LASSO, its dual norm is $\| z \|_\diamond = \max_{i=1, \ldots, p} \|z_{\Ical_i} \|_2$. Since the dual of the dual norm is the original norm, we have
$
    \|x \|_\star \, = \, \sup\big\{ x^T v \, | \, \| v\|_\diamond \le 1 \big\}, \forall \, x \in \mathbb R^r.
$
Further, let $B_{\diamond}(0, 1):=\{ v \, | \, \| v \|_\diamond  \le 1\}$ denote the closed unit ball centered at the origin with respect to $\| \cdot \|_\diamond$. Clearly, the subdifferential of $\|\cdot \|_\star$ at $x=0$ is  $B_{\diamond}(0, 1)$.

%-----------------------------------------------------------------
%
\subsection{Dual Problems: General Formulations} \label{subsect:general_results}

Strong duality will be exploited for the above mentioned problems and their corresponding dual problems. For this purpose, the following minimax result is needed.

\begin{lemma}  \label{lem:minimax}
Consider the convex program $(P): \inf_{z \in \mathcal P, A z = b, C z \le d} J(z)$, where $J(z):= \| E z \|_\star+ f(z) $, $f:\mathbb R^n \rightarrow \mathbb R$ is a convex function, $\mathcal P \subseteq \mathbb R^n$ is a polyhedral set,  $A, C, E$ are matrices, and $b, d$ are vectors. Suppose that $(P)$ is feasible and has a finite infimum. Then
 %%%that this problem is feasible and attains a minimizer $z_*$. We claim that
\begin{eqnarray*}
 \lefteqn{ \inf_{z \in \Pcal} \Big( \sup_{y, \, \mu \ge 0, \, \| v\|_\diamond \le 1 }  \Big[ (E z)^T v + f(z) + y^T( A z  - b) + \mu^T(C z -d)  \Big]\Big)} \\
  &= &   \sup_{y, \, \mu \ge 0, \, \| v\|_\diamond \le 1 }   \Big( \inf_{z \in \Pcal} \Big[ (E z)^T v + f(z) + y^T( A z  - b) + \mu^T (C z - d)   \Big]\Big).
\end{eqnarray*}
\end{lemma}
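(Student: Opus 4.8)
The plan is to recognize this as a Lagrangian strong-duality statement in disguise and to separate the two kinds of dual variables, since they behave very differently. Write $g(z; y, \mu, v) := (Ez)^T v + f(z) + y^T(Az - b) + \mu^T(Cz - d)$, so the left-hand side is $\inf_{z \in \Pcal} \sup_{y, \mu \ge 0, \|v\|_\diamond \le 1} g$ and the right-hand side is $\sup_{y, \mu \ge 0, \|v\|_\diamond \le 1} \inf_{z \in \Pcal} g$. First I would dispose of the left-hand side. For fixed $z$ the inner supremum separates over the three blocks of variables: $\sup_{\|v\|_\diamond \le 1}(Ez)^T v = \|Ez\|_\star$ because $\|\cdot\|_\star$ is the dual norm of $\|\cdot\|_\diamond$ (dual of the dual recovers the original norm, as noted before the lemma); $\sup_y y^T(Az - b)$ is $0$ if $Az = b$ and $+\infty$ otherwise; and $\sup_{\mu \ge 0}\mu^T(Cz - d)$ is $0$ if $Cz \le d$ and $+\infty$ otherwise. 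Hence the inner supremum equals $J(z)$ on the feasible set of $(P)$ and $+\infty$ off it, so the left-hand side equals the primal optimal value $J_*$, which is finite by hypothesis.

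For the right-hand side I would peel off the compact block. Since the outer supremum is over a product set, rewrite it as $\sup_{y,\mu \ge 0}\big[\sup_{\|v\|_\diamond \le 1}\inf_{z \in \Pcal} g\big]$. For each fixed $(y,\mu)$ the map $g$ is convex (indeed, affine plus $f$) and continuous in $z$ on the convex set $\Pcal$, and linear, hence concave and continuous, in $v$ on the \emph{compact} convex ball $B_\diamond(0,1)$. Sion's minimax theorem therefore applies and gives $\sup_{\|v\|_\diamond \le 1}\inf_{z \in \Pcal} g = \inf_{z \in \Pcal}\sup_{\|v\|_\diamond \le 1} g = \inf_{z \in \Pcal}\big[\|Ez\|_\star + f(z) + y^T(Az - b) + \mu^T(Cz - d)\big] =: q(y,\mu)$, the ordinary Lagrangian dual function of $(P)$ obtained by dualizing only the affine constraints while keeping the norm in the objective. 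Thus the right-hand side equals $\sup_{y, \mu \ge 0} q(y,\mu)$.

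It remains to show $\sup_{y,\mu \ge 0} q(y,\mu) = J_*$, i.e.\ that $(P)$ has no duality gap. Here I would invoke the polyhedral/linearity strong-duality criterion (cf.\ \cite[Proposition 5.2.1]{Bertsekas_book99}): $\Pcal$ is polyhedral, the constraints $Az = b$ and $Cz \le d$ are affine, and the objective $J = \|Ez\|_\star + f(z)$ is a finite-valued convex function on all of $\mathbb{R}^n$; together with primal feasibility and finiteness of $J_*$ this yields zero duality gap. Chaining the three reductions gives right-hand side $= \sup_{y,\mu \ge 0} q(y,\mu) = J_* = $ left-hand side. The main obstacle is precisely that the full multiplier domain $\{(y,\mu,v): \mu \ge 0, \|v\|_\diamond \le 1\}$ is \emph{not} compact, so Sion cannot be applied to all the dual variables simultaneously; the two-stage peel resolves this by using Sion only for the compact ball in $v$ and deferring the genuinely unbounded multipliers $y,\mu$ to the polyhedral strong-duality theorem. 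The one technical point to check carefully is that the nonsmooth norm term causes no failure of the constraint qualification, which is fine because $J$ is finite everywhere, so no Slater-type condition beyond the affine/polyhedral structure is needed.
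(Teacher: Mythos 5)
Your proof is correct and rests on exactly the same two ingredients as the paper's: polyhedral strong duality (\cite[Proposition 5.2.1]{Bertsekas_book99}) to handle the unbounded multipliers $(y,\mu)$, and Sion's minimax theorem applied to the compact ball $B_\diamond(0,1)$ in $v$. The only difference is organizational---you apply Sion for every fixed $(y,\mu)$ and then invoke strong duality, whereas the paper first fixes an optimal dual pair $(y_*,\mu_*)$, applies Sion once at that point, and closes with a weak-duality sandwich---so the arguments are essentially identical.
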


\begin{proof}
Let $J_*>-\infty$ be the finite infimum of $(P)$. Since $\Pcal$ is polyhedral, it follows from \cite[Proposition 5.2.1]{Bertsekas_book99} that the strong duality holds, i.e., $J_*= \inf_{z \in \Pcal} \big[ \sup_{y, \mu \ge 0} J(z)+ y^T (A z - b) + \mu^T (C z - d)\big]= \sup_{y, \mu \ge 0} \big[\inf_{z \in \Pcal} J(z) + y^T (A z - b) + \mu^T (C z - d)\big]$, and the dual problem of $(P)$ attains an optimal solution $(y_*, \mu_*)$ with $\mu_* \ge 0$ such that $J_*= \inf_{z\in \Pcal} J(z)+ y^T_*(A z - b) + \mu^T_*(C z - d)$. Therefore,
 \begin{eqnarray*}
    J_* & = & \inf_{z\in \Pcal} \|E z \|_\star + f(z) + y^T_*(A z - b) + \mu^T_*(C z - d) \\
     &= & \inf_{z\in \Pcal}\Big( \sup_{ \| v\|_\diamond \le 1} \Big[ (E z)^T v + f(z) + y^T_*(A z - b) + \mu^T_*(C z - d) \Big] \Big) \\
    & = &  \sup_{ \| v\|_\diamond \le 1} \Big( \inf_{z\in \Pcal} \Big[ (E z)^T v + f(z) + y^T_*(A z - b) + \mu^T_*(C z - d) \Big] \Big) \\
    & \le & \sup_{y, \, \mu \ge 0, \, \| v\|_\diamond \le 1} \Big( \inf_{z\in \Pcal} \Big[  (E z)^T v + f(z) +
     y^T(A z - b) + \mu^T(C z - d)\Big] \Big) \\
    & \le & \inf_{z \in \Pcal} \Big( \sup_{y, \, \mu \ge 0, \, \| v\|_\diamond \le 1 }  \Big[ (E z)^T v + f(z) +
     y^T(A z - b) + \mu^T(C z - d)  \Big]\Big) \, = \, J_*,
 \end{eqnarray*}
 where the third equation follows from Sion's minimax theorem \cite[Corollary 3.3]{Sion_PJM58} and the fact that $B_{\diamond}(0, 1)$ is a convex compact set,
  and the second inequality is due to the weak duality. %%This yields the desired equation.
\end{proof}

In what follows, let $\mathcal C:=\{ x \in \mathbb R^N \, | \, C x \le d \}$ be a general polyhedral set unless otherwise stated, where $C \in \mathbb R^{\ell \times N}$ and $d \in \mathbb R^\ell$.

\gap

\noindent $\bullet$ {\bf Dual Problem of the Regularized BP-like Problem}  \
Consider the regularized BP-like problem for a fixed regularization parameter $\alpha>0$:
\begin{equation} \label{eqn:BP_primal}
  \min_{Ax = b, \, x \in \Ccal} \, \| E x \|_\star + \frac{\alpha}{2}\|x \|^2_2,
\end{equation}
where $b \in R(A) \cap A \mathcal C$ with $A \mathcal C:=\{ A x \, | \, x \in \mathcal C\}$. Let $\mu \in \mathbb R^\ell_+$ be the Lagrange multiplier for the polyhedral constraint $C x \le d$.
%
%Let $\mu=(\mu_1, \ldots, \mu_p)$ with $\mu_i \in \mathbb R^{\ell_i}_+$ be the Lagrange multiplier for the %polyhedral constraint $\mathcal C$.
%
It follows from Lemma~\ref{lem:minimax} with $z=x$ and $\Pcal=\mathbb R^N$ that
\begin{eqnarray*}
\lefteqn{  \min_{A x = b, \, x \in \Ccal} \| E x \|_\star + \frac{\alpha}{2}\|x \|^2_2 } \\
 & =&
    \inf_{x} \Big( \sup_{ y, \, \mu \ge 0, \, \|v\|_\diamond \le 1} \Big[ (E x)^T v + \frac{\alpha}{2}\|x \|^2_2 + y^T( A x - b)+ \mu^T (C x - d) \, \Big]\Big) \\
  & = & \sup_{y, \, \mu \ge 0, \, \| v \|_\diamond \le 1} \Big( \inf_{x} \Big[ (E x)^T v + \frac{\alpha}{2}\|x \|^2_2  + y^T( A x - b) + \mu^T (C x - d)  \Big]  \Big) \\
 %
 % \sum^p_{i=1} \mu^T_i (C_i x_{\Ical_i} - d_i) \Big]  \Big) \\
 %
  & = & \sup_{y, \, \mu \ge 0, \, \| v \|_\diamond \le 1} \Big( -b^T y-  \mu^T d + \sum^p_{i=1} \inf_{x_{\Ical_i}} \Big[ \frac{\alpha}{2} \|x_{\Ical_i}\|^2_2 + \big( ( A^T y + E^T v + C^T \mu )_{\Ical_i} \big)^T x_{\Ical_i} \Big] \Big) \\
  & = & \sup_{y, \, \mu \ge 0, \, \| v \|_\diamond \le 1} \Big( -b^T y - \mu^T d -
    \frac{1}{2\alpha} \sum^p_{i=1}  \big \| \big( A^T y + E^T v+ C^T \mu  \big)_{\Ical_i} \big\|^2_2 \Big),
 %
 % \frac{\alpha}{2}  \sum^N_{i=1} \theta_i \Big( - \frac{1}{\alpha} \big( A^T y + E^T v \big)_i \Big) \Big],
\end{eqnarray*}
This leads to the equivalent dual problem:
\begin{equation} \label{eqn:BP_dual}
  \mbox{(D)}: \quad \min_{y, \, \mu \ge 0, \, \| v \|_\diamond \le 1} \, \Big( b^T y + d^T \mu +
   %%%%\sum^p_{i=1} \mu^T_i d_i +
  \frac{1}{2\alpha} \sum^p_{i=1}  \big \| ( A^T y + E^T v + C^T \mu)_{\Ical_i} \big\|^2_2  \, \Big).
\end{equation}
%
%Let $\ell:=\sum^p_{i=1} \ell_i$, and
%
Let $(y_*, \mu_*, v_*)\in \mathbb R^m \times \mathbb R^\ell_+ \times B_\diamond(0, 1)$ be an optimal solution of the dual problem; its existence is shown in the proof of Lemma~\ref{lem:minimax}. Consider the Lagrangian $L(x, y, \mu, v):=(E x)^T v + \frac{\alpha}{2}\|x \|^2_2 + y^T( A x - b)+\mu^T (Cx - d)$. Then by the strong duality given in Lemma~\ref{lem:minimax}, we  see from $\nabla_x L(x_*, y_*, \mu_*, v_*)=0$ that the unique optimal solution $x^*=(x^*_{\Ical_i})^p_{i=1}$ of (\ref{eqn:BP_primal}) is given by
\[
  x^*_{\Ical_i} \, = \,  -\frac{1}{\alpha} \Big( A^T y_* + E^T v_* + C^T \mu_* \Big)_{\Ical_i}, \qquad \forall \ i=1, \ldots, p.
\]

%\gap

\noindent $\bullet$ {\bf Dual Problem of the LASSO-like Problem} \ Consider the  LASSO-like problem for $A \in \mathbb R^{m\times N}$, $b \in \mathbb R^m$, and $E \in \mathbb R^{r\times N}$:
%
%and the polyhedral set $\Ccal=\{ x \in \mathbb R^N \, | \, C x \le d\}$ with $C \in \mathbb R^{\ell \times %N}$ and $d \in \mathbb R^\ell$:
%
\begin{equation} \label{eqn:Lasso_primal}
  \min_{x \in \Ccal} \, \frac{1}{2} \|A  x - b\|^2_2 +  \| E x \|_\star.
\end{equation}
It follows from Lemma~\ref{lem:minimax} with $z=(x, u)$ and $\Pcal=\mathbb R^N\times \mathbb R^m$ that
\begin{eqnarray*}
 \lefteqn{ \min_{x \in \Ccal} \frac{1}{2} \|A x - b\|^2_2+ \| E x \|_\star  \, = \, \inf_{ x \in \Ccal, \, u=A x -b } \frac{\|u \|^2_2}{2}+  \| E x \|_\star }  \\
 %
 % & = & \inf_{x \in \Ccal, u} \Big( \sup_{y} \, \frac{\|u \|^2_2}{2} +  \| E x \|_\star +  y^T( A x - b-u) %\Big) \\
 %
  & = & \inf_{x, u} \Big( \sup_{y, \, \mu \ge 0, \,  \| v \|_\diamond \le 1} \Big \{  \, \frac{\|u \|^2_2}{2} +   (E x)^T v  + y^T( A x - b-u) +  \mu^T (Cx - d) \,   \Big\} \Big) \\
  & = & \sup_{y, \, \mu \ge 0, \, \| v \|_\diamond \le 1} \Big( \inf_{x, u} \,   \frac{\|u \|^2_2}{2} + (E x)^T v   + y^T( A x - b-u) +  \mu^T (Cx - d)  \,  \Big) \\
  & = & \sup_{y, \, \mu \ge 0, \, \| v \|_\diamond \le 1} \Big( -b^T y - \mu^T d + \inf_u  \Big( \frac{\|u \|^2_2}{2} - y^T u \Big) +  \sum^p_{i=1} \inf_{x_{\Ical_i}}  \big[ ( A^T y + E^T v+ C^T \mu )_{\Ical_i}\big]^T  x_{\Ical_i} \Big), \\
  & = & \sup_{ y, \, \mu \ge 0, \, \| v \|_\diamond \le 1} \Big\{  -b^T y- \frac{\|y \|^2_2}{2} -\mu^T d \, : \,  ( A^T y + E^T v + C^T \mu)_{\Ical_i} = 0, \ i=1, \ldots, p  \Big\}.
%
%   & = & \sup_{ y, \, \| v \|_\diamond \le 1} \Big( -b^T y- \frac{\|y \|^2_2}{2} +  \sum^N_{i=1} \inf_{t %\in[l_i, u_i]} ( A^T y + E^T v)_i \cdot t  \Big),
%   %\\
\end{eqnarray*}
This yields the equivalent dual problem
\begin{equation} \label{eqn:Lasso_dual}
  \mbox{(D)}: \quad \min_{y, \, \mu \ge 0, \, \| v \|_\diamond \le 1} \, \Big\{ \frac{\|y \|^2_2}{2} + b^T y + d^T \mu \, : \, ( A^T y + E^T v +C^T \mu)_{\Ical_i} = 0, \ i=1, \ldots, p \, \Big\}.
\end{equation}
By Lemma~\ref{lem:minimax}, the dual problem attains an optimal solution $(y_*, \mu_*, v_*)\in \mathbb R^m \times \mathbb R^\ell_+ \times B_\diamond(0, 1)$. Since the objective function of (\ref{eqn:Lasso_dual}) is strictly convex in $y$ and convex in $(\mu, v)$, $y_*$ is unique (but $(\mu_*, v_*)$ may not).
%
%\begin{equation} \label{eqn:Lasso_dual}
%    \mbox{(D)}: \quad  \inf_{ y \in \mathbb R^m, \, \| v\|_\diamond \le 1, \, (y, v) \in  \Kcal } \Big( b^T y %+ \frac{\|y \|^2_2}{2} + g(y, v) \Big).
%\end{equation}
%Its objective function is coercive in $y$, and $v$ is subject to a  bounded constraint. Thus a dual solution %$(y_*, v_*)$ exists. Since $b^T y + \frac{\|y \|^2_2}{2}$ is strictly convex and $g(y, v)$ is convex, $y_*$ %is unique but $v_*$ may not.
%

The following lemma establishes a connection between a primal solution and a dual solution, which is critical to distributed algorithm development.

\begin{lemma} \label{lem:primal_dual_Lasso}
      Let $(y_*, \mu_*, v_*)$ be an optimal solution to the dual problem (\ref{eqn:Lasso_dual}). Then for any optimal solution $x_*$ of the primal problem (\ref{eqn:Lasso_primal}),  $A x_* - b= y_*$. Further, if $\mathcal C$ is a polyhedral cone (i.e., $d=0$), then  $ \|E x_*\|_\star=  -(b+ y_*)^T y_*$.
\end{lemma}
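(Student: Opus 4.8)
The plan is to combine the strong duality established in Lemma~\ref{lem:minimax} with the saddle-point structure underlying the derivation of the dual (\ref{eqn:Lasso_dual}). Recall that (\ref{eqn:Lasso_dual}) was obtained by introducing the slack variable $u = A x - b$ with multiplier $y$ and writing $\|E x\|_\star = \sup_{\|v\|_\diamond \le 1}(E x)^T v$. Accordingly, I set
\[
L(x, u, y, \mu, v) := \frac{1}{2}\|u\|^2_2 + (E x)^T v + y^T(A x - b - u) + \mu^T(C x - d),
\]
and let $V$ be the common optimal value of the primal (\ref{eqn:Lasso_primal}) and its dual; by Lemma~\ref{lem:minimax} these coincide and are both attained. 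Given any primal minimizer $x_*$, I put $u_* := A x_* - b$, so that $(x_*, u_*)$ is primal feasible with $\sup_{y,\,\mu\ge 0,\,\|v\|_\diamond\le 1} L(x_*, u_*, y, \mu, v) = V$.

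The first step is to show that $(x_*, u_*)$ minimizes the inner Lagrangian at the dual optimizer $(y_*, \mu_*, v_*)$. The chain
\[
V = \sup_{y,\,\mu\ge 0,\,\|v\|_\diamond\le 1} L(x_*, u_*, y, \mu, v) \ge L(x_*, u_*, y_*, \mu_*, v_*) \ge \inf_{x, u} L(x, u, y_*, \mu_*, v_*) = V,
\]
whose last equality is strong duality, forces equality throughout; in particular $(x_*, u_*)$ attains $\inf_{x,u} L(x, u, y_*, \mu_*, v_*)$. Viewed as a function of $u$ alone, $L(x_*, u, y_*, \mu_*, v_*)$ equals $\frac{1}{2}\|u\|^2_2 - y_*^T u$ plus a constant, which is strictly convex with unique minimizer $u = y_*$. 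Hence $u_* = y_*$, i.e., $A x_* - b = y_*$. Since the argument does not single out a particular minimizer, it holds for every primal optimal $x_*$.

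For the second claim I would equate the two evaluations of $V$. The primal objective at $x_*$ gives $V = \frac{1}{2}\|u_*\|^2_2 + \|E x_*\|_\star$, while strong duality together with (\ref{eqn:Lasso_dual}) gives $V = -\big(\frac{1}{2}\|y_*\|^2_2 + b^T y_* + d^T\mu_*\big)$. When $\Ccal$ is a polyhedral cone we may take $d = 0$, eliminating the $d^T\mu_*$ term; substituting $u_* = y_*$ from the first part and rearranging then yields $\|E x_*\|_\star = -\|y_*\|^2_2 - b^T y_* = -(b + y_*)^T y_*$.

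The step I expect to be the main obstacle is justifying that the primal optimal pair minimizes the inner Lagrangian at the dual optimum, since this rests on the interchange of infimum and supremum and the simultaneous attainment of both extremizers --- precisely the content supplied by Lemma~\ref{lem:minimax} (via \cite[Proposition 5.2.1]{Bertsekas_book99} and Sion's theorem). Once this saddle relation is in place, the remainder is routine: the strict convexity of the quadratic in $u$ pins down $u_* = y_*$, and the value identity is elementary algebra.
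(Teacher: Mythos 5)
Your proposal is correct and follows essentially the same route as the paper: both arguments hinge on the saddle-point structure of the Lagrangian $L(x,u,y,\mu,v)$ supplied by the strong duality of Lemma~\ref{lem:minimax}, identify $u_* = y_*$ from the minimization in $u$ (you via strict convexity, the paper via $\nabla_u L = 0$, which is the same computation), and obtain the value identity by equating the primal and dual optimal values when $d=0$. Your explicit chain of inequalities establishing that $(x_*,u_*)$ minimizes the inner Lagrangian at the dual optimum is a slightly more detailed justification of the saddle-point claim the paper simply asserts, but it is not a different argument.
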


\begin{proof}
Consider the equivalent primal problem for (\ref{eqn:Lasso_primal}): $\min_{x \in \Ccal, A x - b=u} \frac{1}{2} \| u \|^2_2+  \| E x\|_\star$, and let $(x_*, u_*)$ be its optimal solution.
%
%, and recall that $B_\diamond(0, 1):=\{ v \, | \, \|v \|_\diamond \le 1\}$.
%
% be the unit closed ball with respect to the dual norm $\| \cdot \|_\diamond$.
%
Consider the Lagrangian
\[
   L(x, u, y, \mu, v) \, := \, \frac{\|u \|^2_2}{2} +  (E x)^T v  + y^T( A x - b-u) + \mu^T ( C x -d).
    %%\sum^p_{i=1} \mu^T_i (C_i x_{\Ical_i} - d_i).
\]
In view of the strong duality shown in Lemma~\ref{lem:minimax},  $(x_*, u_*, y_*,\mu_*,  v_*)$ is a saddle point of $L$. Hence,
\begin{eqnarray*}
  L(x_*, u_*, y, \mu, v) \ \le \ L(x_*, u_*,  y_*, \mu_*, v_* ), \ \ & \quad \forall \ y \in \mathbb R^m, \ \mu \in \mathbb R^\ell_+, \  v \in B_\diamond(0, 1); \\
  L(x_*, u_*,  y_*, \mu_*, v_*) \ \le \ L(x, u,  y_*, \mu_*, v_*), \quad & \forall \ x \in \mathbb R^N, \ u \in \mathbb R^m.
\end{eqnarray*}
The former inequality implies that $\nabla_y L(x_*, u_*, y_*, \mu_*, v_*)=0$ such that $A x_* - b - u_*=0$; the latter inequality shows that $ \nabla_u L(x_*, u_*, y_*, \mu_*, v_*)=0$, which yields $u_* - y_*=0$. These results lead to $A x_* - b = y_*$.
Lastly, when $d=0$, it follows from the strong duality that $\frac{1}{2} \| A x_* - b \|^2_2+ \| E x_*\|_\star = - b^T y_* - \frac{1}{2}\|y_*\|^2_2$. Using $A x_* - b = y_*$, we have
$
 \| E x_* \|_\star = -b^T y_* - \| y_* \|^2_2 = -(b+ y_*)^T y_*.
$
\end{proof}

%\gap

\noindent $\bullet$ {\bf Dual Problem of the BPDN-like Problem} \
Consider the BPDN-like problem with $\sigma>0$:
\begin{equation} \label{eqn:BPDN_primal}
  \min_{x \in \Ccal, \, \| Ax - b\|_2 \le \sigma} \, \| E x \|_\star \, = \, \inf_{ x \in \Ccal, \, u=A x -b, \, \| u\|_2 \le \sigma } \| E x \|_\star,
\end{equation}
where we assume that the problem is feasible and has a positive optimal value,
$\| b\|_2 > \sigma$, and  the polyhedral set $\Ccal$ satisfies $0\in \Ccal$. Note that $0\in \Ccal$ holds if and only if $d \ge 0$.

To establish the strong duality, we also assume that there is an $\wt x$ in the relative interior of $\Ccal$ (denoted by $\mbox{ri}(\Ccal)$) such that $\| A \wt x - b\|_2 < \sigma$ or equivalently, by \cite[Theorem 6.6]{Rockafellar_book70}, there exits $\wt u \in A ( \mbox{ri}(\Ccal))-b$ such that $\| \wt u \|_2 < \sigma$.
A sufficient condition for this assumption to hold is that $b \in A ( \mbox{ri}(\Ccal))$.
Under this assumption, it follows from \cite[Theorem 28.2]{Rockafellar_book70} that there exist $y_* \in \mathbb R^m$,  $\mu_* \ge 0$, and $\lambda_* \ge 0$ such that
%
%$\min_{x \in \Ccal, \| Ax - b\|_2 \le \sigma}  \| E x \|_\star =
$
\inf_{ x \in \Ccal, \, u=A x -b, \| u\|_2 \le \sigma } \| E x \|_\star = \inf_{x \in \Ccal, u} \| Ex \|_\star +y^T_*(A x - b - u) + \lambda_*(\|u\|^2_2-\sigma^2) +\mu_*^T(C x - d)$. By the similar argument for Lemma~\ref{lem:minimax}, we have
\begin{eqnarray*}
 \lefteqn{ \min_{x \in \Ccal, \| Ax - b\|_2 \le \sigma}  \| E x \|_\star \ = \ \inf_{ x \in \Ccal, \, u=A x -b, \| u\|_2 \le \sigma } \| E x \|_\star } \\
  & = & \inf_{x \in \Ccal, u} \Big( \sup_{y, \, \mu \ge 0, \, \| v \|_\diamond \le 1, \, \lambda \ge 0} \Big \{  \,  (E x)^T v  +  \lambda ( \| u \|^2_2 - \sigma^2) + y^T( A x - b-u) + \mu^T( C x -d) \,   \Big\} \Big) \\
  & = & \sup_{y, \, \mu \ge 0, \, \| v \|_\diamond \le 1, \lambda \ge 0} \Big( \inf_{x, u} \,  (E x)^T v  +  \lambda ( \| u \|^2_2 - \sigma^2) + y^T( A x - b-u) + \mu^T( C x -d) \, \Big)\\
 & \triangleq & \sup_{y, \, \mu \ge 0, \, \| v \|_\diamond \le 1, \lambda > 0} \Big( \inf_{x, u} \,  (E x)^T v  +  \lambda ( \| u \|^2_2 - \sigma^2) + y^T( A x - b-u) +  \mu^T( C x -d) \, \Big)\\
  & = & \sup_{y, \mu\ge 0, \| v \|_\diamond \le 1, \lambda > 0} \Big( -b^T y - \mu^T d - \lambda \sigma^2 + \inf_u  \big( \lambda\|u\|^2_2 - y^T u \big) +  \sum^p_{i=1} \inf_{x_{\Ical_i}}  \big( ( A^T y + E^T v + C^T \mu)_{\Ical_i} \big)^T  x_{\Ical_i} \Big] \Big) \\
  & \circeq & \sup_{y, \mu\ge 0, \| v \|_\diamond \le 1, \lambda > 0} \Big\{ -b^T y - \mu^T d - \lambda \sigma^2 - \frac{\|y \|^2_2}{4\lambda} \, : \,  ( A^T y + E^T v + C^T \mu)_{\Ical_i}  = 0, \ i=1, \ldots, p \, \Big\} \\
 %
 % & = & \sup_{ y, \, \| v \|_\diamond \le 1, \lambda >0} \Big( -b^T y  - \lambda \sigma^2 - \frac{\|y %\|^2_2}{4\lambda} +   \sum^N_{i=1} \inf_{t \in[l_i, u_i]} ( A^T y + E^T v)_i \cdot t \, \Big) \\
 %
  & = & \sup_{y, \, \mu \ge 0, \, \| v \|_\diamond \le 1} \Big\{ -b^T y - \sigma \|y \|_2  -\sum^p_{i=1} \mu^T_i d_i \, : \,  ( A^T y + E^T v + C^T \mu)_{\Ical_i}  = 0, \ i=1, \ldots, p \, \Big\}.
\end{eqnarray*}
%
%where the function $g$ and the set $\Kcal$ are defined by (\ref{eqn:g_function}) and (\ref{eqn:K_set}), %respectively.
%
Here the reason for letting $\lambda>0$ in the 4th equation (marked with $\triangleq$) is as follows: suppose $\lambda=0$, then
\begin{eqnarray*}
\lefteqn{\sup_{y, \mu \ge 0, \| v \|_\diamond \le 1} \Big\{ \inf_{x} \Big[ (Ex)^T v + y^T (A x -b) +  \mu^T(Cx - d) \Big] + \inf_u \, y^T (- u) \Big\} } \\
%
%-b^T y- \lambda \sigma^2 + \inf_u  \big( \lambda\|u\|^2_2 - y^T u \big) +  \inf_{x \in \Ccal} \Big[( A^T y + %E^T v)^T x \Big] \Big) } \\
%
& = & \sup_{y=0, \, \mu \ge 0, \, \| v \|_\diamond \le 1} \Big\{ \inf_{x} \Big[ (Ex)^T v + y^T (A x -b) +  \mu^T(Cx - d) \Big] + \inf_u \,y^T (- u) \Big\} \\
& = & \sup_{ \mu \ge 0, \, \| v \|_\diamond \le 1} \Big( \inf_x \Big[ (Ex)^T v + \mu^T(Cx - d) \Big] \Big) \\
& \le &  \inf_x \Big( \sup_{ \mu \ge 0, \, \| v \|_\diamond \le 1} \Big[ (Ex)^T v + \mu^T(Cx - d) \Big] \Big) \, = \, \inf_{x \in \Ccal} \| E x \|_\star \le 0,
\end{eqnarray*}
where we use the fact that $0\in \Ccal$. This shows that  the positive optimal value cannot be achieved when $\lambda=0$, and thus the constraint $\lambda \ge 0$ in the 3rd equation can be replaced by $\lambda>0$ without loss of generality. Besides, in the second-to-last equation (marked with $\circeq$), the constraint $y$ can be replaced with $y\ne 0$ because otherwise, i.e., $y=0$, then we have, in light of $\mu\ge 0$ and $d \ge 0$,
\begin{eqnarray*}
 \sup_{y=0, \mu\ge 0, \| v \|_\diamond \le 1, \lambda > 0} \Big\{ -b^T y -\mu^T d - \lambda \sigma^2 - \frac{\|y \|^2_2}{4\lambda} \, : \,  ( A^T y + E^T v + C^T \mu)_{\Ical_i}  = 0, \ i=1, \ldots, p \, \Big\}  \\
 =  \sup_{\mu \ge 0, \, \| v \|_\diamond \le 1, \lambda > 0} \Big\{   - \lambda \sigma^2-\mu^T d \, : \,  ( E^T v + C^T \mu)_{\Ical_i} = 0, \ i=1, \ldots, p \, \Big\} \, \le \, 0, \qquad
\end{eqnarray*}
which cannot achieve the positive optimal value.  Hence, we only consider $y\ne 0$ whose correponding optimal $\lambda_*=\frac{\| y \|_2 }{2\sigma}$ in the second-to-last equation is indeed positive and thus satifies the constraint $\lambda>0$. This gives rise to the equivalent dual problem
\begin{equation} \label{eqn:BPDN_dual}
  \mbox{(D)}: \quad \min_{y, \, \mu \ge 0, \, \| v \|_\diamond \le 1} \, \Big\{ \, b^T y + \sigma \|y \|_2  +
  d^T \mu \, : \, ( A^T y + E^T v + C^T \mu)_{\Ical_i} = 0, \ i=1, \ldots, p \, \Big\}.
\end{equation}

%
%Its objective function is coercive in $y$ and the $v$-part is bounded. Hence, a dual solution $(y_*, v_*)$ %exists although it may not be unique. Since the optimal value of the primal problem is positive, we claim %that $y_* \ne 0$ because otherwise the optimal value of (\ref{eqn:BPDN_dual}) is given by $\inf_{(0, v) \in %\Kcal} g(0, v)=0$ in view of $g(y, v) \ge 0$ for all $(y, v)$ and $0\in B_{\diamond}(0, 1)$, which yields a %contradiction.
%

By the similar argument for Lemma~\ref{lem:minimax}, the dual problem attains an optimal solution $(y_*, \mu_*, v_*)\in \mathbb R^m \times \mathbb R^\ell_+ \times B_\diamond(0, 1)$ along with $\lambda_*\ge 0$.
%
%$\lambda_*=\frac{\|y_*\|_2}{2\sigma}$.
%
The following lemma establishes certain solution properties of the dual problem and a connection between a primal solution and a dual solution, which is crucial to distributed algorithm development. Particularly, it shows that the $y$-part of a dual solution is unique when $\mathcal C$ is a polyhedral cone.

\begin{lemma} \label{lem:primal_dual_BPDN}
 Consider the BPDN (\ref{eqn:BPDN_primal}), where $\| b\|_2 > \sigma$, $0\in \Ccal$, and its optimal value is positive. Assume that the strong duality holds. The following hold:
 \begin{itemize}
   \item [(i)] Let $(y_*,  \mu_*, v_*)$ be a dual solution of (\ref{eqn:BPDN_dual}). Then $y_* \ne 0$, and
        for any solution $x_*$ of (\ref{eqn:BPDN_primal}),  $A x_* - b= \frac{\sigma y_*}{\|y_*\|_2}$. Further, if $\Ccal$ is a polyhedral cone (i.e., $d=0$),
        %
        %(i.e., $d_i=0, \forall \, i=1, \ldots, p$),
        %
        then  $ \|E x_*\|_\star= -b^T y_* - \sigma \|y_*\|_2$.
    \item [(ii)] Suppose $d=0$. Let $(y_*, \mu_*, v_*)$ and $(y'_*, \mu'_*, v'_*)$ be two arbitrary solutions of  (\ref{eqn:BPDN_dual}). Then $y_* = y'_*$.
  \end{itemize}
\end{lemma}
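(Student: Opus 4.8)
The plan is to exploit the saddle-point structure guaranteed by the assumed strong duality, in the same spirit as the proof of Lemma~\ref{lem:primal_dual_Lasso}, and then to read off the two claimed identities from the stationarity condition in the slack variable $u:=Ax-b$ together with the solution property in Proposition~\ref{prop:soln_property}(iii).

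\textbf{Part (i).} First I would establish $y_*\ne0$. The value of the dual objective at a minimizer is $b^Ty_*+\sigma\|y_*\|_2+d^T\mu_*$, which strong duality identifies with the negative of the (strictly positive) primal optimal value, hence it is strictly negative. Since $0\in\Ccal$ forces $d\ge0$ and $\mu_*\ge0$, the term $d^T\mu_*$ is nonnegative, so $y_*=0$ would make the whole expression nonnegative, a contradiction; this is precisely the step already observed in the derivation preceding the lemma (the one marked $\circeq$). Next, to obtain $Ax_*-b=\sigma y_*/\|y_*\|_2$ for \emph{every} primal minimizer $x_*$, I would use that under strong duality every pair consisting of a primal minimizer $x_*$ and the dual solution $(y_*,\mu_*,v_*)$, together with the associated $\lambda_*\ge0$, is a saddle point of the Lagrangian $L(x,u,y,\mu,v,\lambda)=(Ex)^Tv+\lambda(\|u\|_2^2-\sigma^2)+y^T(Ax-b-u)+\mu^T(Cx-d)$ from the dual derivation. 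Stationarity in $u$ gives $2\lambda_*u_*-y_*=0$ with $u_*=Ax_*-b$. The key point is $\lambda_*>0$: if $\lambda_*=0$ then $\inf_u(-y_*^Tu)$ is finite only when $y_*=0$, which we have excluded, so $\lambda_*>0$ and $u_*=y_*/(2\lambda_*)$. Since $\|b\|_2>\sigma$, $0\in\Ccal$, and the optimal value is positive, Proposition~\ref{prop:soln_property}(iii) yields $\|u_*\|_2=\|Ax_*-b\|_2=\sigma$; combined with $u_*=y_*/(2\lambda_*)$ this forces $\lambda_*=\|y_*\|_2/(2\sigma)$, hence $u_*=\sigma y_*/\|y_*\|_2$, i.e.\ $Ax_*-b=\sigma y_*/\|y_*\|_2$. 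Finally, when $d=0$, strong duality gives $\|Ex_*\|_\star=-b^Ty_*-\sigma\|y_*\|_2-d^T\mu_*=-b^Ty_*-\sigma\|y_*\|_2$.

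\textbf{Part (ii).} I would fix any primal minimizer $x_*$ and apply Part (i) to both dual solutions, obtaining $\sigma y_*/\|y_*\|_2=Ax_*-b=\sigma y'_*/\|y'_*\|_2$. Setting $\hat y:=(Ax_*-b)/\sigma$, which is a unit vector since $\|Ax_*-b\|_2=\sigma$, this shows $y_*$ and $y'_*$ are positive multiples of the \emph{same} direction $\hat y$; write $y_*=t\hat y$ and $y'_*=t'\hat y$ with $t,t'>0$. To pin down the magnitudes I would invoke the $d=0$ identity of Part (i) for both solutions: $\|Ex_*\|_\star=-b^Ty_*-\sigma\|y_*\|_2=-t(b^T\hat y+\sigma)$ and likewise equals $-t'(b^T\hat y+\sigma)$. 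Because the common left-hand side is the positive primal value and $t>0$, the scalar $b^T\hat y+\sigma$ is strictly negative, in particular nonzero; cancelling it gives $t=t'$ and therefore $y_*=y'_*$.

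\textbf{Main obstacle.} The delicate step is the stationarity-in-$u$ analysis in Part (i): one must justify $\lambda_*>0$ so that dividing by $\lambda_*$ is legitimate, and then tie $\lambda_*$ to $\|y_*\|_2$ through the active-constraint identity $\|Ax_*-b\|_2=\sigma$ supplied by Proposition~\ref{prop:soln_property}(iii). Once the exact relation $Ax_*-b=\sigma y_*/\|y_*\|_2$ is secured, the remainder—including the uniqueness of $y_*$ in Part (ii)—follows routinely by separating the direction of $y_*$ (fixed by $Ax_*-b$) from its magnitude (fixed by the optimal-value identity).
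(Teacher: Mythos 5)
Your proposal is correct and follows essentially the same route as the paper: the saddle-point/stationarity argument in $u$ with $\lambda_*>0$ forced by $y_*\ne 0$, the identification $Ax_*-b=\sigma y_*/\|y_*\|_2$, and the direction-plus-magnitude argument for uniqueness of $y_*$. The only cosmetic differences are that you obtain $\|Ax_*-b\|_2=\sigma$ from Proposition~\ref{prop:soln_property}(iii) where the paper uses the complementarity relation $\lambda_*>0\Rightarrow\|u_*\|_2=\sigma$, and your Part (ii) is a slightly streamlined version of the same computation.
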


\begin{proof}
(i) Consider the equivalent primal problem for (\ref{eqn:BPDN_primal}): $\min_{x \in \Ccal,\, A x = b=u, \, \|u\|_2 \le \sigma } \| E x\|_\star$, and let $(x_*, u_*)$ be its optimal solution.
For a dual solution $(y_*,  \mu_*, v_*)$, we deduce that $y_* \ne 0$ since otherwise, we have $-(b^T y_* + \sigma \| y_*\|_2 + d^T \mu_* ) \le 0$, which contradicts its positive optimal value  by the strong duality.

Consider the Lagrangian
%
%for $x \in \Ccal$, $v\in B_\diamond(0, 1)$,  $u \in \mathbb R^m, y \in \mathbb R^m$
%and $\lambda \in \mathbb R_+$:
%
\[
   L(x, u, y, \mu,  v, \lambda ) \, := \,  (E x)^T v  + y^T( A x - b-u) + \lambda (\|u\|^2_2 - \sigma^2) +  \mu^T( C x - d). %%\sum^p_{i=1} \mu^T_i (C_i x_{\Ical_i} - d_i).
\]
 By the strong duality, $(x_*, u_*, y_*, \mu_*, v_*, \lambda_*)$ is a saddle point of $L$ such that
\begin{eqnarray*}
  L(x_*, u_*, y, \mu, v, \lambda) \ \le \ L(x_*, u_*,  y_*, \mu_*,  v_*, \lambda_* ), \ \ & \quad \forall \ y \in \mathbb R^m, \ \mu \in \mathbb R^\ell_+, \ v \in B_\diamond(0, 1), \ \lambda \in \mathbb R_+; \\
  L(x_*, u_*,  y_*, \mu_*, v_*, \lambda_*) \ \le \ L(x, u,  y_*, \mu_*, v_*, \lambda_*), \quad & \forall \ x \in \mathbb R^N, \ u \in \mathbb R^m.
\end{eqnarray*}
The former inequality implies that $\nabla_y L(x_*, u_*, y_*, \mu_*, v_*, \lambda_*)=0$, yielding $A x_* - b - u_*=0$, and the latter shows that $ \nabla_u L(x_*, u_*, y_*, \mu_*, v_*, \lambda_*)=0$, which gives rise to $2 \lambda_* u_* = y_*$. Since $y_* \ne 0$, we have $\lambda_*>0$ which implies $\|u_*\|_2 - \sigma=0$ by the complementarity relation. It thus follows from $2 \lambda_* u_* = y_*$ and $\|u_*\|_2 = \sigma$ that $\lambda_*=\frac{\|y_*\|_2}{2 \sigma}$. This leads to $u_* = \frac{y_*}{2 \lambda_*}=  \frac{\sigma y_*}{\|y_*\|_2}$. Therefore, $A x_* - b = u_*= \frac{\sigma y_*}{\|y_*\|_2} $.
Finally, when $d=0$, we deduce via the strong duality that $ \| E x_*\|_\star = - b^T y_* - \sigma \|y_*\|_2$.

(ii) Suppose $d=0$. Let $(y_*, \mu_*, v_*)$ and $(y'_*, \mu'_*, v'_*)$ be two solutions of  the dual problem (\ref{eqn:BPDN_dual}), where $y_* \ne 0$ and $y'_* \ne 0$. Then $b^T y_* + \sigma \| y_*\|_2 = b^T y'_* + \sigma \|y'_*\|_2=-\|E x_*\|_\star<0$. Therefore, $\|y_*\|_2 \big( b^T \frac{y_*}{\|y_*\|_2} + \sigma \big) = \|y'_*\|_2 \big( b^T \frac{y'_*}{\|y'_*\|_2} + \sigma \big)$, and $b^T \frac{y_*}{\|y_*\|_2} + \sigma < 0$.
It follows from Proposition~\ref{prop:soln_property} that for any solution $x_*$ of the primal problem (\ref{eqn:BPDN_primal}), $A x_* - b$ is constant. By the argument for Part (i), we have $A x_*-b=u_*$ and $A x'_*-b=u'_*$ such that $u_*=u'_*$, and $u_*= \frac{\sigma y_*}{\|y_*\|_2}$ and $u'_*= \frac{\sigma y'_*}{\|y'_*\|_2}$.
Hence, $\frac{y_*}{\|y_*\|_2} = \frac{y'_*}{\|y'_*\|_2}$ such that $b^T \frac{y_*}{\|y_*\|_2} + \sigma = b^T \frac{y'_*}{\|y'_*\|_2} + \sigma  < 0$. In light of $\|y_*\|_2 \big( b^T \frac{y_*}{\|y_*\|_2} + \sigma \big) = \|y'_*\|_2 \big( b^T \frac{y'_*}{\|y'_*\|_2} + \sigma \big)$, we have $\|y_*\|_2 = \|y'_*\|_2$. Using $\frac{y_*}{\|y_*\|_2} = \frac{y'_*}{\|y'_*\|_2}$ again, we obtain $y_* = y'_*$.
\end{proof}

\begin{remark} \rm  \label{remark:decoupled_polyhedral_set}
The above dual problem formulations for a general polyhedral set $\Ccal$ are useful for distributed computation when $\ell \ll N$, even if $C \in \mathbb R^{\ell \times N}$ is a dense matrix; see Section~\ref{sect:distributed_schemes}. When both $N$ and $\ell$ are large, e.g., $\Ccal =\mathbb R^N_+$, decoupling properties of $\Ccal$ are preferred. In particular,
%
%letting $\{ \Ical_i \}^p_{i=1}$ be a disjoint union of $\{1, \ldots, N\}$,
%
consider
the following polyhedral set of certain decoupling structure:
\begin{equation}
   \mathcal C \, := \, \{ x =(x_{\Ical_i})^p_{i=1} \in \mathbb R^N \, | \, C_{\Ical_i} \, x_{\Ical_i} \le d_{\Ical_i}, \ \ i=1, \ldots, p \, \},  \label{eqn:polyhedral_set_C}
\end{equation}
where $C_{\Ical_i} \in \mathbb R^{\ell_i \times |\Ical_i|}$ and $d_{\Ical_i} \in \mathbb R^{\ell_i}$ for each $i=1, \ldots, p$. Let $\ell:=\sum^p_{i=1} \ell_i$. Also, let $\mu=(\mu_{\Ical_i})^p_{i=1}$ with $\mu_{\Ical_i} \in \mathbb R^{\ell_i}_+$ be the Lagrange multiplier for $\mathcal C$. The dual problems in (\ref{eqn:BP_dual}), (\ref{eqn:Lasso_dual}), and (\ref{eqn:BPDN_dual}) can be easily extended to the above set $\Ccal$ by replacing $\mu^T d$ with $\sum^p_{i=1} \mu^T_{\Ical_i} d_{\Ical_i}$ and $( A^T y + E^T v + C^T \mu)_{\Ical_i}$ with $( A^T y + E^T v )_{\Ical_i} + C^T_{\Ical_i} \mu_{\Ical_i}$, respectively.
For example, the dual problem of the regularized problem (\ref{eqn:BP_primal}) is:
\[
  \mbox{(D)}: \quad \min_{y, \, \mu \ge 0, \, \| v \|_\diamond \le 1} \, \Big( b^T y +  \sum^p_{i=1} \mu^T_{\Ical_i} d_{\Ical_i} +
   \frac{1}{2\alpha} \sum^p_{i=1} \big \| ( A^T y + E^T v )_{\Ical_i} + C^T_{\Ical_i} \mu_{\Ical_i} \big\|^2_2 \, \Big).
\]
Moreover, letting $(y_*, \mu_*, v_*)$ be a dual solution, the unique primal solution $x^*=(x^*_{\Ical_i})^p_{i=1}$ is given by
$
  x^*_{\Ical_i} \, = \,  -\frac{1}{\alpha} \Big[\big( A^T y_* + E^T v_*  \big)_{\Ical_i} + C^T_{\Ical_i} (\mu_*)_{\Ical_i} \Big], \forall \, i=1, \ldots, p.
$
Further, Lemmas~\ref{lem:primal_dual_Lasso} and \ref{lem:primal_dual_BPDN} also hold for a primal solution $x_*$ and a dual solution $y_*$.
\end{remark}

\noindent $\bullet$ {\bf Reduced Dual Problems for Box Constraints} \
Consider the box constraint set $\mathcal C:=  [l_1, u_1] \times \cdots \times [l_N, u_N]$, where $-\infty \le l_i < u_i \le +\infty$ for each $i=1, \ldots, N$. We assume $0 \in \mathcal C$ or equivalently $l_i \le 0 \le u_i$ for each $i$,  which often holds for sparse signal recovery. We may write $\mathcal C=\{ x \in \mathbb R^N \, | \, \mathbf l \le x \le \mathbf u \}$, where $\mathbf l :=(l_1, \ldots, l_N)^T$ and $\mathbf u :=(u_1, \ldots, u_N)^T$.
The dual problems for such $\Ccal$ can be reduced by removing the dual variable $\mu$ as shown below.

For any given $l_i \le 0 \le u_i$ with $l_i < u_i$ for $i=1, \ldots, N$, define the function $\theta_i:\mathbb R \rightarrow \mathbb R$ as
\begin{equation} \label{eqn:q_i}
   \theta_i(t) \, := \,  \  t^2- (t - \Pi_{[l_i, u_i]}(t))^2 = t^2 - \big( \min ( t - l_i, \, (u_i - t)_- ) \big)^2, \qquad \ \forall \, t \in \mathbb R.
\end{equation}
%and the vector valued function $q:\mathbb R^N \rightarrow \mathbb R^N$ as $q(z):=(q_1(z_1), \ldots, %q_N(z_N))^T$.
%
%Clearly, each $\theta_i(t) = t^2- (t - \Pi_{[l_i, u_i]}(t))^2$,
%
%where $\Pi_K$ denotes the Euclidean projector onto a closed convex set $K$.
%
Hence, $\theta_i$ is $C^1$ and convex \cite[Theorem 1.5.5, Exercise 2.9.13]{FPang_book03}, and $\theta_i$ is increasing on $\mathbb R_+$ and decreasing on $\mathbb R_-$, and its minimal value   on $\mathbb R$ is zero. When $\Ccal=\mathbb R^N$, $\theta_i(s)=s^2, \forall \, i$; when $\Ccal=\mathbb R^N_+$, $\theta_i(s)=(s_+)^2, \forall \, i$.

Define the index sets $\Lcal_\infty:=\{ i \, | \, l_i=-\infty, \ u_i=+\infty\}$, $\Lcal_+:=\{ i \, | \, l_i \mbox{ is finite}, \ u_i=+\infty\}$,  $\Lcal_-:=\{ i \, | \, l_i=-\infty, \ u_i \mbox{ is finite} \}$, and $\Lcal_b :=\{1, \ldots, N\} \setminus (\Lcal_\infty \cup \Lcal_+\cup \Lcal_-)$. Further, define the polyhedral cone
%\begin{equation}
\[
  \Kcal \, := \big\{ (y, v) \in \mathbb R^m \times \mathbb R^r \ | \ (A^T y + E^T v )_{\Lcal_\infty}=0, \  (A^T y + E^T v )_{\Lcal_+} \ge 0, \ (A^T y + E^T v )_{\Lcal_-} \le 0 \big\}, %%\label{eqn:K_set}
\]
%%\end{equation}
and the extended real valued convex PA function
\begin{eqnarray}
   g(y, v)  & \, := & \sum_{i\in \Lcal_+} (- l_i) \cdot \big[ ( A^T y + E^T v)_i \big]_+ + \sum_{i\in \Lcal_-} u_i \cdot \big[ ( A^T y + E^T v)_i \big]_-  \notag \\ %% \label{eqn:g_function} \\
     & & \quad + \sum_{i\in \Lcal_b} \Big\{ (- l_i) \cdot \big[ ( A^T y + E^T v)_i \big]_+ +  u_i \cdot \big[ ( A^T y +  E^T v)_i \big]_- \Big\}, \quad \forall \ (y, v) \in \Kcal, \notag
\end{eqnarray}
and $g(y, v) :=+\infty$ for each $(y, v) \not\in \Kcal$. Note that $g(y, v)\ge 0, \forall \, (y, v) \in \Kcal$. When the box constraint set $\Ccal$ is a cone, then $\Kcal=\{ (y, v) \, | \, A^T y + E^T v \in \mathcal C^*\}$ (where $\Ccal^*$ is the dual cone of $\Ccal$), and the corresponding $g(y, v)=0$ for all $(y, v) \in \mathcal K$. Using these results, we obtain the following  reduced dual problems:

(i) The  dual of the regularized BP-like problem (\ref{eqn:BP_primal}):
%
%$\min_{y, \, \| v \|_\diamond \le 1} \, \Big[ \, b^T y +
%   \frac{\alpha}{2} \sum^N_{i=1} \theta_i \Big( -\frac{1}{\alpha} \big( A^T y + E^T v \big)_{i} \, \Big) %\Big]$.
%
$
 \min_{y, \| v \|_\diamond \le 1} \, b^T y +
   \frac{\alpha}{2} \sum^N_{i=1} \theta_i \big( -\frac{1}{\alpha} \big( A^T y + E^T v \big)_{i} \big).
$
%
%Moreover, letting $(y_*, v_*)$ be a dual solution, the unique primal solution $x^*=(x^*_{\Ical_i})^p_{i=1}$ %is given by
%

(ii) The  dual of the  LASSO-like problem (\ref{eqn:Lasso_primal}):
$
 \min_{ \| v\|_\diamond \le 1, \, (y, v) \in  \Kcal } \big( b^T y + \frac{\|y \|^2_2}{2} + g(y, v) \big).
$

(iii) Under similar assumptions, the  dual of the  BPDN-like problem (\ref{eqn:BPDN_primal}): \\
$
  %%%\mbox{(D)}: \quad
  \min_{ \| v\|_\diamond \le 1, \, (y, v) \in  \Kcal } \big( b^T y + \sigma \| y\|_2 + g(y, v) \big).
$

\gap

The dual problems developed in this subsection can be further reduced or simplified for specific norms or polyhedral constraints. This will be shown in the subsequent  subsections.

%------------------------------------------------------------
%
\subsection{Applications to the $\ell_1$-norm based Problems} \label{subsect:dual_problem_L1}

Let $\|\cdot \|_\star$ be the $\ell_1$-norm; its dual norm is the $\ell_\infty$-norm. As before, $\Ccal$ is a general polyhedral set defined by $C x \le d$ unless otherwise stated.
%
%As before,  we consider a general polyhedral set $\Ccal=\{ x \in \mathbb R^N \, | \, C x \le d\}$  unless %otherwise stated.
%
%As shown in Section~\ref{subsect:exact_reg_convx_PA}, exact regularization holds.
%

\gap

\noindent $\bullet$ {\bf Reduced Dual Problem of the Regularized BP-like Problem} \ Consider two cases as follows:

{\bf Case (a)}: $E=I_N$.  The dual variable $v$ in (\ref{eqn:BP_dual}) can be removed using the soft thresholding or shrinkage operator $S_\kappa:\mathbb R \rightarrow \mathbb R$ with the parameter $\kappa>0$ given by
\[
  S_\kappa (s) \, : = \, \argmin_{t \in \mathbb R} \frac{1}{2}(t - s)^2 + \kappa | t| = \left\{ \begin{array}{ll} s - \kappa, & \mbox{ if } s \ge \kappa \\ 0, & \mbox{ if } s \in [-\kappa, \kappa] \\ s + \kappa, & \mbox{ if } s \le -\kappa \end{array} \right.
\]
When $\kappa=1$, we write $S_\kappa(\cdot)$ as $S(\cdot)$ for notational convenience. It is known that $S^2(\cdot)$ is convex and $C^1$.
Further, for a vector $v=(v_1, \ldots, v_k)^T \in \mathbb R^k$, we let $S(v):=(S(v_1), \ldots, S(v_k))^T \in \mathbb R^n$.
In view of $\min_{|t|\le 1}(t -s)^2 = S^2(s), \forall \, s \in \mathbb R$ whose optimal solution is given by
$t_*=\Pi_{[-1, 1]}(s)$, the dual problem (\ref{eqn:BP_dual}) reduces to
\begin{equation} \label{eqn:BP_like_1norm}
  \mbox{(D)}: \quad \min_{y, \, \mu \ge 0} \, \Big( b^T y + \mu^T d +
  \frac{1}{2\alpha} \sum^p_{i=1} \|S\big( - (A^T y + C^T \mu )_{\Ical_i} \big)\|^2_2  \, \Big).
\end{equation}
Letting $(y_*, \mu_*)$ be an optimal solution of the above reduced dual problem, it can be shown via the strong duality that for each $\Ical_i$, $(v_*)_{\Ical_i} = \psi \big( - (A^T y_* + C^T \mu_*)_{\Ical_i} \big)$, where $\psi(v):=(\Pi_{[-1, 1]}(v_1), \ldots, \Pi_{[-1, 1]}(v_k))$ for $v\in \mathbb R^k$. Thus the unique primal solution $x^*$ is given by
\begin{equation} \label{eqn:solution_x*_L1}
   x^*_{\Ical_i}  = -\frac{1}{\alpha} \Big[ ( A^T y_* + C^T \mu_* )_{\Ical_i}  + \psi \big( - (A^T y_*+ C^T \mu_*)_{\Ical_i} \big)  \Big]=  -\frac{1}{\alpha} S\Big(( A^T y_* + C^T \mu_* )_{\Ical_i} \Big),   \quad \forall \, i=1, \ldots, p.
\end{equation}
When $\Ccal$ is a box constraint set, the equivalent dual problem further reduces to
\begin{equation} \label{eqn:BP_dual_box_constraint_L1}
\mbox{(D)}: \quad    \min_{y \in \mathbb R^m} \Big[ b^T y + \frac{\alpha}{2}  \sum^N_{i=1} \theta_i \circ \Big( - \frac{1}{\alpha}  S \Big( \big( A^T y \big)_i \Big) \Big) \, \Big].
\end{equation}
Letting $y_*$ be a dual solution, the unique primal solution $x^*$ is given by %%({\bf ???})
\[
   x^*_i \, = \,  \max\left\{ l_i, \, \min\Big( - \frac{1}{\alpha} S\big( (A^T y_*)_i \big), \, u_i \Big) \right\},
    \qquad \forall \ i=1, \ldots, N.
\]

{\bf Case (b)}: $E=\begin{bmatrix} I_N \\ F \end{bmatrix}$ for some matrix $F \in \mathbb R^{k\times N}$. Such an $E$ appears in the $\ell_1$ penalty of the fused LASSO. Let $v=(v', \wt v)$. Noting that $\| v\|_\infty \le 1 \Leftrightarrow \|v'\|_\infty \le 1, \| \wt v\|_\infty \le 1$, and $E^T v = v' + F^T \wt v$, we have
\begin{equation} \label{eqn:BP_dual_L1_fuse_Lasso}
  \mbox{(D)}: \quad \min_{y, \, \mu \ge 0, \, \|\wt v\|_\infty \le 1} \, \Big( b^T y + \mu^T d
 %
 % \sum^p_{i=1} \mu^T_i d_i
 %
  + \frac{1}{2\alpha} \sum^p_{i=1} \big\|S\big( - (A^T y+F^T \wt v+C^T \mu)_{\Ical_i} \big) \big\|^2_2  \, \Big).
\end{equation}
Letting $(y_*, \mu_*, \wt v_*)$ be an optimal solution of the above reduced dual problem, it can be shown via the similar argument for Case (a) that the unique primal solution $x^*$ is given by
\begin{equation} \label{eqn:solution_x*_fused_Lasso}
   x^*_{\Ical_i} \, = \,  -\frac{1}{\alpha} S\Big(( A^T y_* + F^T \wt v_* + C^T \mu_* )_{\Ical_i}  \Big),   \quad \forall \ i=1, \ldots, p.
\end{equation}
Similarly, when $\Ccal$ is a box constraint set, the equivalent dual problem further reduces to
\[
   \mbox{(D)}: \quad  \min_{y \in \mathbb R^m, \, \| \wt v \|_\infty \le 1 \,} \Big[ \, b^T y + \frac{\alpha}{2}  \sum^N_{i=1} \theta_i \circ \Big( - \frac{1}{\alpha}  S \Big( \big( A^T y + F^T \wt v \big)_i \Big) \Big) \, \Big],
\]
and  the primal solution $x^*$ is expressed in term of a dual solution $(y_*, \wt v_*)$ as
\[
   x^*_i \, = \,  \max\left\{ l_i, \, \min\Big( - \frac{1}{\alpha} S\big( (A^T y_*+ F^T \wt v_*)_i \big), \, u_i \Big) \right\},
   \quad \forall \ i=1, \ldots, N.
\]

\gap

\noindent $\bullet$ {\bf Reduced Dual Problem of the LASSO-like Problem} \ Consider the following cases:

{\bf Case (a)}: $E= \lambda I_N$ for a positive constant $\lambda$. The dual problem (\ref{eqn:Lasso_dual}) reduces to
\begin{equation} \label{eqn:dual_Lasso_L1}
  \mbox{(D)}: \quad \min_{y, \, \mu \ge 0} \, \Big\{ \frac{\|y \|^2_2}{2} + b^T y + d^T \mu \, : \, \| (A^T y+ C^T \mu)_{\Ical_i}  \|_\infty \le  \lambda, \ i=1, \ldots, p \, \Big\}.
\end{equation}
Particularly, when $\mathcal C=\mathbb R^N$, it further reduces to $\min_{\|A^T y \|_\infty \le \lambda} \frac{\|y \|^2_2}{2} + b^T y$; when $\mathcal C=\mathbb R^N_+$, in light of the fact that  the inequality  $w + v \ge 0$ and $\| v\|_\infty \le 1$ is feasible for a given vector $w$ if and only if $w \ge -\mathbf 1$, we see that the dual problem (\ref{eqn:Lasso_dual}) further reduces to $\min_{A^T y \ge - \lambda \mathbf 1} \frac{\|y \|^2_2}{2} + b^T y$.

%\gap

{\bf Case (b)}: $E=\begin{bmatrix} \lambda I_N \\ F \end{bmatrix}$ for some matrix $F \in \mathbb R^{k\times N}$ and $\lambda>0$. Such an $E$ appears in the $\ell_1$ penalty of the fused LASSO. The dual problem (\ref{eqn:Lasso_dual}) reduces to
\begin{equation} \label{eqn:dual_fused_Lasso_L1}
  \mbox{(D)}: \quad \min_{y, \, \mu \ge 0, \, \| \wt v\|_\infty \le 1} \, \Big\{ \frac{\|y \|^2_2}{2} + b^T y + d^T \mu \, : \, \| (A^T y+ F^T \wt v + C^T \mu)_{\Ical_i}  \|_\infty \le  \lambda, \ i=1, \ldots, p \, \Big\}.
\end{equation}
Particularly, when $\mathcal C=\mathbb R^N$, it further reduces to $\min_{\|A^T y + F^T \wt v \|_\infty \le \lambda, \, \| \wt v\|_\infty\le 1} \frac{\|y \|^2_2}{2} + b^T y$.

\gap

\noindent $\bullet$ {\bf Reduced Dual Problem of the BPDN-like Problem} \ Consider the following cases under the similar assumptions given below (\ref{eqn:BPDN_primal}) in Section~\ref{subsect:general_results}:

{\bf Case (a)}: $E=  I_N$. The equivalent dual problem (\ref{eqn:BPDN_dual}) becomes
\begin{equation} \label{eqn:dual_BPDN_L1}
  \mbox{(D)}: \quad \min_{y, \, \mu \ge 0} \, \Big\{ \, b^T y + \sigma \|y \|_2  +
  d^T \mu \, : \, \| (A^T y + C^T \mu)_{\Ical_i} \|_\infty \le 1, \ i=1, \ldots, p \, \Big\}.
\end{equation}
When $\mathcal C=\mathbb R^N$, it further reduces to $\min_{\|A^T y \|_\infty \le 1} b^T y + \sigma \|y \|_2 $; when $\mathcal C=\mathbb R^N_+$, the dual problem (\ref{eqn:Lasso_dual}) further reduces to $\min_{A^T y \ge - \mathbf 1}  b^T y+\sigma \|y\|_2$.

%%\gap

{\bf Case (b)}: $E=\begin{bmatrix} I_N \\ F \end{bmatrix}$ for some $F \in \mathbb R^{k\times N}$. The equivalent dual problem (\ref{eqn:BPDN_dual}) reduces to
\begin{equation} \label{eqn:dual_fused_BPDN_L1}
  \mbox{(D)}: \quad \min_{y, \, \mu \ge 0, \, \| \wt v\|_\infty \le 1} \, \Big\{  b^T y + \sigma \|y \|_2 + d^T \mu \, : \, \| (A^T y+ F^T \wt v+ C^T \mu)_{\Ical_i}  \|_\infty \le  1, \ i=1, \ldots, p \, \Big\}.
\end{equation}
Particularly, when $\mathcal C=\mathbb R^N$, it further reduces to $\min_{\|A^T y + F^T \wt v \|_\infty \le 1, \, \| \wt v\|_\infty\le 1}  b^T y+\sigma \|y\|_2$.

%------------------------------------------------------------
%
\subsection{Applications to Problems Associated with the Norm from Group LASSO} \label{subsect:dual_group_Lasso}

Consider the norm $\|x\|_\star :=\sum^p_{i=1} \|x_{\Ical_i}\|_2$ arising from the group LASSO, where its dual norm $\| x \|_\diamond = \max_{i=1,\ldots, p} \|x_{\Ical_i} \|_2$.
%
%We assume that the exact regularization holds; see Section~\ref{subsect:exact_reg_group_BP}.
%
%We focus on the box constraint set $\Ccal$ in this subsection.
%

\gap

\noindent $\bullet$ {\bf Reduced Dual Problem of the Regularized BP-like Problem} \ We consider $E=I_N$ as follows.

{\bf Case (a)}: $\Ccal$ is a general polyhedral set defined by $C x \le d$. Given a vector $w$, we see that
\[
   \min_{\|v\|_\diamond \le 1} \sum^p_{i=1} \big\| (v-w)_{\Ical_i} \big \|^2_2 = \min_{ ( \max_{i=1,\ldots, p} \|v_{\Ical_i} \|_2)  \, \le 1} \sum^p_{i=1} \big \| (v-w)_{\Ical_i} \big \|^2_2 = \sum^p_{i=1} \min_{ \| v_{\Ical_i} \|_2 \le 1} \big \| v_{\Ical_i} - w_{\Ical_i} \big\|^2_2.
\]
Let $S_{\|\cdot\|_2}(z)  : =    \big(1 - \frac{1}{\| z\|_2} \big)_+ z,  \forall  z \in \mathbb R^n$ denote the soft thresholding operator with respect to the $\ell_2$-norm, and let $B_2(0, 1):=\{ z \, | \, \|z\|_2 \le 1\}$. It is known that given $w$, $z_* := \Pi_{B_2(0,1)}(w)= w - S_{\|\cdot\|_2}(w)$ and $\|z_* -w \|^2_2= \| S_{\|\cdot\|_2}(w) \|^2_2 = [ (\|w\|_2 -1)_+ ]^2$. Applying these results to (\ref{eqn:BP_dual}), we obtain the reduced dual problem
\begin{equation} \label{eqn:BP_like_group_Lasso}
  \mbox{(D)}: \quad \min_{y, \, \mu \ge 0} \, \Big( b^T y + \mu^T d +
  \frac{1}{2\alpha} \sum^p_{i=1} \Big[ \big(\big\| (A^T y + C^T \mu )_{\Ical_i} \big\|_2 -1 \big)_+ \Big]^2  \, \Big).
\end{equation}
Letting $(y_*, \mu_*)$ be an optimal solution of the problem (D), the primal solution is given by
\begin{equation} \label{eqn:solution_x*_group_Lasso}
   x^*_{\Ical_i}  =   -\frac{1}{\alpha} S_{\|\cdot \|_2} \Big(( A^T y_* + C^T \mu_* )_{\Ical_i} \Big),   \quad \forall \, i=1, \ldots, p.
\end{equation}
The above results can be easily extended to the decoupled polyhedral constraint set given by (\ref{eqn:polyhedral_set_C}).

\gap

{\bf Case (b)}: $\Ccal$ is a box constraint with $0 \in \Ccal$. In this case, the dual variable $\mu$ can be removed. In fact, it follows from the results at the end of Section~\ref{subsect:general_results} that the reduced dual problem is
\begin{equation} \label{eqn:BP_dual_group_Lasso_box_constr}
  \min_{y, \, (v_{\Ical_i})^p_{i=1}} \sum^p_{i=1} \Big[ \, \frac{b^T y}{p} + \frac{\alpha}{2}  \sum_{j\in \Ical_i} \theta_j\Big( - \frac{1}{\alpha} \big( (A_{\bullet\Ical_i})^T y + v_{\Ical_i} \big)_j \Big) \Big], \quad \mbox{ subject to } \ \|v_{\Ical_i} \|_2 \le 1, \ \ i=1, \ldots, p,
\end{equation}
where the functions $\theta_j$'s are defined in (\ref{eqn:q_i}). Given a dual solution $(y_*, v_*)$,
 the primal solution $x^*_{\Ical_i}  = \max\big(\mathbf l_{\Ical_i}, \min(-\frac{  (A_{\bullet\Ical_i})^T y_* + (v_*)_{\Ical_i}}{\alpha}, \mathbf u_{\Ical_i}) \big)$  for $i=1, \ldots, p$.
When the box constraint set $\mathcal C$ is a cone, the above problem can be further reduced by removing $v$. For example,
when $\Ccal=\mathbb R^N$,  the reduced dual problem becomes
$
 \min_{y\in \mathbb R^m} \big ( b^T y + \frac{1}{2\alpha} \sum^p_{i=1} \big[ \big(\| (A_{\bullet\Ical_i})^T y\|_2 - 1 \big)_+\big]^2 \, \big),
$
and the primal solution $x^*$ is given in term of a dual solution $y_*$ by
$x^*_{\Ical_i} =  -\frac{1}{\alpha}S_{\|\cdot\|_2}( (A_{\bullet\Ical_i})^T y_*)$ for  $i=1, \ldots, p$.
When $\Ccal=\mathbb R^N_+$, the reduced dual problem becomes:
$
    \min_{y\in \mathbb R^m} \big ( b^T y + \frac{1}{2\alpha} \sum^p_{i=1} \big[ \big(\| [(A_{\bullet\Ical_i})^T y ]_-\|_2 - 1 \big)_+\big]^2 \, \big).
$
Given a dual solution $y_*$,  the unique primal solution $x^*$ is given by:
%
% for each $i=1, \ldots, p$ with $z=- \frac{1}{\alpha} (A_{\bullet\Ical_i})^T y_*$,
%
\[
   x^*_{\Ical_i} \, = \,  \Big[ - \frac{1}{\alpha}  ((A_{\bullet\Ical_i})^T y_*)_+ +\frac{1}{\alpha} S_{\|\cdot\|_2} \big( ((A_{\bullet\Ical_i})^T y_*)_- \big) \Big]_+, \qquad \forall \ i=1, \ldots, p.
\]

\gap

\noindent $\bullet$ {\bf Reduced Dual Problem of the LASSO-like Problem} \  Let $E= \lambda I_N$ for a positive constant $\lambda$. For a general polyhedral constraint $C x \le d$, the dual problem (\ref{eqn:Lasso_dual}) reduces to
\begin{equation} \label{eqn:dual_Lasso_norm_group_Lasso}
  \mbox{(D)}: \quad \min_{y, \, \mu \ge 0} \, \Big\{ \frac{\|y \|^2_2}{2} + b^T y + d^T \mu \, : \, \| (A^T y+ C^T \mu)_{\Ical_i}  \|_2 \le  \lambda, \ i=1, \ldots, p \, \Big\}.
\end{equation}
When $\mathcal C=\mathbb R^N$, the dual problem becomes $\min_{y} \big( b^T y + \frac{\|y \|^2_2}{2}  \big)$ subject to $\|(A_{\bullet \Ical_i})^T y \|_2 \le \lambda, i=1, \ldots, p$.
When $\mathcal C=\mathbb R^N_+$, the dual problem is $\min_{ y} \big( b^T y + \frac{\|y \|^2_2}{2}  \big)$ subject to $ \| v\|_\diamond \le 1, A^T y + \lambda v \ge 0$, which is equivalent to $\|v_{\Ical_i}\|_2 \le 1, (A^T y)_{\Ical_i} + v_{\Ical_i} \ge 0$ for all $i=1, \ldots, p$. Note that for a given vector $w \in \mathbb R^k$, the inequality system $v+w \ge 0, \|v\|_2 \le 1$ is feasible if and only if $ w \in B_2(0, 1) + \mathbb R^k_+$.
%
%, where $B_2(0, 1)$ is the closed unit ball centered at the origin with respect to the 2-norm.
%
Hence, when $\mathcal C=\mathbb R^N_+$, the dual problem is given by $\min_{ y} \big( b^T y + \frac{\|y \|^2_2}{2}  \big)$ subject to $ (A^T y)_{\Ical_i} \in B_2(0, \lambda) + \mathbb R^{|\Ical_i|}_+$ for all $i=1, \ldots, p$.

\gap

\noindent $\bullet$ {\bf Reduced Dual Problem of the BPDN-like Problem}  Let $E= I_N$. Suppose the similar assumptions indicated in Section~\ref{subsect:general_results} hold. For a general polyhedral set $\Ccal$, the dual problem (\ref{eqn:BPDN_dual}) reduces to
\begin{equation} \label{eqn:dual_BPDN_norm_group_Lasso}
  \mbox{(D)}: \quad \min_{y, \, \mu \ge 0} \, \Big\{ \, b^T y + \sigma \|y \|_2  +
  d^T \mu \, : \, \| (A^T y + C^T \mu)_{\Ical_i} \|_2 \le 1, \ i=1, \ldots, p \, \Big\}.
\end{equation}
When $\mathcal C=\mathbb R^N$, the dual problem is $\min_{ y} \big( b^T y + \sigma \|y \|_2  \big)$ subject to $\|(A_{\bullet \Ical_i})^T y \|_2 \le 1, i=1, \ldots, p$.
When $\mathcal C=\mathbb R^N_+$, the dual problem is $\min_{ y} \big( b^T y + \sigma \|y \|_2  \big)$ subject to $ (A^T y)_{\Ical_i} \in B_2(0, 1) + \mathbb R^{|\Ical_i|}_+$ for all $i=1, \ldots, p$.
%

%\gap

%\gap

%\gap

%%%%%%%%%%%%%%%%%%%%%%%%%%%%%%%%%%%%%%%%%%%%%%%%%%%%%%%%%%%%%%%%%%%%%%%%%%%%%%%%%%%%%%%%%

%----------------------------------------------------------------------------
%
\section{Column Partition based Distributed Algorithms: A Dual Approach} \label{sect:distributed_schemes}

To elaborate the development of distributed algorithms, recall that the index sets $\Ical_1, \ldots, \Ical_p$ form a disjoint union of $\{1, \ldots, N\}$ such that  $\{ A_{\bullet\Ical_i} \}^p_{i=1}$ forms a column partition of $A\in \mathbb R^{m\times N}$. We assume that there are $p$ agents in a network, and the $i$th agent possesses $A_{\bullet\Ical_i}$ and $b$ (with possibly additional information) but does not know the other $A_{\bullet\Ical_j}$'s with $j\ne i$.
Further, we consider a general network topology modeled by an undirected graph $\mathcal G(\mathcal V, \mathcal E)$, where $\mathcal V=\{1, \ldots, p\}$ is the set of agents, and $\mathcal E$
represents the set of bidirectional edges connecting two agents in $\mathcal V$, i.e., if two agents are connected by an edge in $\mathcal E$, then two agents can exchange information. We assume that $\mathcal G(\mathcal V, \mathcal E)$ is connected.

%
%For each $i \in \mathcal V$, let $\mathcal N_i:=\{ j \in \mathcal V \, | \, (i, j) \in \mathcal E \}$ be the %set of neighbors of agent $i$, where $i \notin \mathcal N_i$.
%

Motivated by the large-scale problems arising from various applications indicated in Section~\ref{sect:introduction}, we are especially interested in the cases where $N$ is large whereas each agent has a limited or relatively small memory capacity. Hence, we consider certain classes of polyhedral sets $\Ccal:=\{ x \in \mathbb R^N \, | \, Cx \le d\}$ with $C \in \mathbb R^{\ell \times N}$ and $d \in \mathbb R^\ell$, for example, those with $\ell \ll N$ or those with decoupling structure given in Remark~\ref{remark:decoupled_polyhedral_set}. Under these conditions, we will show that the dual problems obtained in Section~\ref{sect:dual_problems} can be easily formulated as separable or locally decoupled convex problems to which a wide range of existing distributed schemes can be effectively applied using column partition. This is particularly important to the development of two-stage distributed schemes for the densely coupled LASSO-like and BPDN-like problems (cf. Section~\ref{subsect:algorithm_Lasso_BPDN}).
For the purpose of illustration, we consider operator splitting method based synchronous distributed schemes including the Douglas-Rachford (D-R) algorithm and its variations \cite{DavisYin_SVA17, HuXiaoLiu_CDC18}. It should be pointed out that it is {\em not} a goal of this paper to improve the performance of the existing schemes or seek the most efficient existing scheme
%
% compare the efficiency of these existing schemes
%
%or select the most efficient one
%
but rather to demonstrate their applicability to the obtained dual problems. In fact, many other synchronous or asynchronous distributed schemes can be exploited under even weaker assumptions, e.g., time-varying networks.

%
%Key points: (1) use available distribute schemes and don't intend to improve performance of these schemes; %(2) can be extend to asynchronous schemes and other functions $\sum f_i(x_{\Ical_i})$ or even time-varying %network topology; (3) interested in distributed schemes with limited memory requirements for each agent; (4) %a brief review of 3-operator splitting method with mentioning the Lipschtiz property (???)
%
%
%We always consider  the box constraint set $\mathcal C$ defined at the beginning of %Section~\ref{sect:dual_problems} through this section.
%

%

%------------------------------------------------------------------------------------------------
%
\subsection{Column Partition based Distributed Schemes for Regularized BP-like Problems} \label{subsect:Distributed_BP}

Consider the regularized BP-like problem (\ref{eqn:BP_primal}) with the regularization parameter $\alpha>0$. It follows from Section~\ref{sect:dual_problems} that this problem can be solved from its dual, which can be further solved via column partition based distributed schemes. We discuss these schemes for specific norms $\|\cdot\|_\star$ as follows.

%---------------------------------------------------------------------------
%
\subsubsection{Regularized BP-like Problems with $\ell_1$-norm} \label{subsubsect:BP_scheme_L1}

When $\|\cdot\|_\star$ is the $\ell_1$-norm, Corollary~\ref{coro:exact_reg_BPL1} shows the exact regularization holds, i.e., the regularized problem attains a solution of the original BP-like problem for all sufficiently small $\alpha>0$.

{\bf Case (a)}: $E=I_N$ and $\ell$ is small (e.g., $\ell \ll N$). To solve the problem (\ref{eqn:BP_like_1norm}),   let $J_i:\mathbb R^m \times \mathbb R^\ell \rightarrow \mathbb R$ be
\[
   J_i (\mathbf y_i, {\boldsymbol\mu}_i) \, := \, \frac{b^T \mathbf y_i + d^T {\boldsymbol\mu}_i}{p} \,  + \frac{1}{2\alpha} \Big \| S\big(- (A_{\bullet \Ical_i})^T \mathbf y_i - ( C_{\bullet \Ical_i})^T {\boldsymbol\mu}_i \big) \Big \|^2_2, \qquad i=1, \ldots, p,
\]
 the consensus subspace $\mathcal A_{\mathbf y}:=\{ \mathbf y \, | \, \mathbf y_i = \mathbf y_j, \, \forall \, (i, j) \in \mathcal E\}$, where $\mathbf y :=(\mathbf y_1, \ldots, \mathbf y_p) \in \mathbb R^{mp}$, and the consensus cone $\mathcal A_{\boldsymbol \mu}:=\{ \boldsymbol \mu \ge 0 \, | \, \boldsymbol \mu_i = \boldsymbol \mu_j, \, \forall \, (i, j) \in \mathcal E\}$, where $\boldsymbol \mu:=(\boldsymbol \mu_1, \ldots, \boldsymbol \mu_p) \in \mathbb R^{\ell p}$. Hence, the dual problem (\ref{eqn:BP_like_1norm}) can be equivalently written as the following separable convex minimization problem:
\[
    \min_{(\mathbf y, \boldsymbol\mu) \in \mathcal A_{\mathbf y} \times \mathcal A_{\boldsymbol \mu} } \, \sum^p_{i=1} J_i(\mathbf y_i, \boldsymbol\mu_i).
    %
    %\qquad \mbox{ subject to } \ \ \mathbf y\in \mathcal A.
\]
A specific operator splitting based scheme for solving this problem is the Douglas-Rachford algorithm \cite{HuXiaoLiu_CDC18}: given suitable constants $\eta \in (0, 1)$ and $\rho>0$,
\begin{subequations} \label{eqn:BP_scheme_L1}
\begin{eqnarray}
   \mathbf w^{k+1} & = & %%%%\ol {\mathbf z^k_i}:=
   \Pi_{\mathcal A_{\mathbf y} \times \mathcal A_{\boldsymbol \mu}} (\mathbf z^k), \label{eqn:BP_scheme_L1_a}\\
   %
   %\ \mbox{[distributed version?]}, \\
   %
   %\frac{1}{1 + |\mathcal N_i|} \Big( \mathbf z_i + \sum_{k\in \mathcal N_i} \mathbf z_k\Big),
   %
   %%%  \quad i=1, \ldots, p, \\
   \mathbf z^{k+1}_i & = & \mathbf z^k_i + 2\eta \Big( \, \mbox{prox}_{\rho J_i} \big(2 \mathbf w^{k+1}_i - \mathbf z^k_i \big) - \mathbf w^{k+1}_i \Big), \quad i=1, \ldots, p, \label{eqn:BP_scheme_L1_b}
\end{eqnarray}
\end{subequations}
where $\mathbf w^k=(\mathbf y^k, {\boldsymbol \mu}^k) \in \mathbb R^{mp}\times \mathbb R^{\ell p}$,  $\mathbf z^k=(\mathbf z^k_\mathbf y, \mathbf z^k_{\boldsymbol \mu}) \in \mathbb R^{mp}\times \mathbb R^{\ell p}$, $\mathbf w^k_i=(\mathbf y^k_i, {\boldsymbol \mu}^k_i)\in \mathbb R^{p}\times \mathbb R^{\ell}$, $\mathbf z^k_i=( (\mathbf z^k_\mathbf y)_i, (\mathbf z^k_{\boldsymbol \mu})_i )\in \mathbb R^{m}\times \mathbb R^{\ell}$ for $i=1, \ldots, p$,
and $\mbox{prox}_{\rho f}(\cdot)$ denotes the proximal operator for a convex function $f$.
This scheme can be implemented distributively, where each agent $i$ only knows $A_{\bullet \Ical_i}, C_{\bullet \Ical_i}$ and other constant vectors or parameters, e.g., $b, d$, and $\alpha$, and has the local variables $\mathbf w^k_i$ and $\mathbf z^k_i$ at step $k$. For any $\mathbf z=(\mathbf z_\mathbf y, \mathbf z_{\boldsymbol \mu})$, we have $\Pi_{\mathcal A_{\mathbf y} \times \mathcal A_{\boldsymbol \mu}} (\mathbf z)=\big( \ol{\mathbf z_\mathbf y}, (\ol {\mathbf z_{\boldsymbol \mu}})_+\big)$, where $\ol{\mathbf z_\mathbf y}:=  \mathbf 1 \otimes [\frac{1}{p}\sum^p_{i=1} (\mathbf z_\mathbf y)_i]  $ denotes the averaging of $\mathbf z_\mathbf y$, and the similar notation holds for $\ol {\mathbf z_{\boldsymbol \mu}}$. Therefore, the first step given by (\ref{eqn:BP_scheme_L1_a}) can be implemented via distributed averaging algorithms \cite{XiaoBoyd_SCL04}, and the second step given by (\ref{eqn:BP_scheme_L1_b}) can be also computed in a distributed manner due to the separable structure of $J_i$'s.

The scheme given by (\ref{eqn:BP_scheme_L1}) generates a sequence $( \mathbf z^k)$ that converges to $\mathbf z_*$. Further, $(\mathbf y_*, {\boldsymbol \mu_*}) =\Pi_{\mathcal A_{\mathbf y} \times \mathcal A_{\boldsymbol \mu}} (\mathbf z_*)$, where $\mathbf y_*=(y_*, \ldots, y_*)\in \mathcal A_{\mathbf y}$ for some $y_* \in \mathbb R^m$ and ${\boldsymbol \mu_*}=(\mu_*, \ldots, \mu_*) \in \mathcal A_{\boldsymbol \mu}$ for some $\mu_* \in \mathbb R^\ell_+$ in view of the connectivity of $\mathcal G$. Once the dual solution $(y_*, \mu_*)$ is found, it follows from (\ref{eqn:solution_x*_L1}) that the primal solution $x^*$ is given  by  $x^*_{\Ical_i} = -\frac{1}{\alpha} S\big( (A_{\bullet \Ical_i})^T y_* + ( C_{\bullet \Ical_i})^T  \mu_*  \big)$ for each $i=1, \ldots, p$.

%%%\gap

\gap

{\bf Case (b)}: $E=I_N$, $\ell$ is large, and $\Ccal$ is given by (\ref{eqn:polyhedral_set_C}). It follows from Remark~\ref{remark:decoupled_polyhedral_set} and Section~\ref{subsect:dual_problem_L1} that the equivalent dual problem is given by: recalling that $\mu:=(\mu_{\Ical_i})^p_{i=1} \in \mathbb R^N$,
\[
  \mbox{(D)}: \quad \min_{y, \, \mu \ge 0} \, \Big( b^T y +  \sum^p_{i=1} \mu^T_{\Ical_i} d_{\Ical_i} +
   \frac{1}{2\alpha} \sum^p_{i=1} \big \| S( -(A_{\bullet\Ical_i})^T y -C^T_{\Ical_i} \mu_{\Ical_i}) \big\|^2_2 \, \Big).
\]
Letting $J_i(\mathbf y_i, \mu_{\Ical_i}) := b^T \mathbf y_i/p + \mu^T_{\Ical_i} d_{\Ical_i} + \frac{1}{2\alpha} \big \| S( -(A_{\bullet\Ical_i})^T \mathbf y_i -C^T_{\Ical_i} \mu_{\Ical_i}) \big\|^2_2$,
%
% for $i=1, \ldots, p$,
%
 an equivalent form of (D) is
\[
 \min_{\mathbf y \in \mathcal A_{\mathbf y}, \, \mu \ge 0} \, \sum^p_{i=1} J_i(\mathbf y_i, \mu_{\Ical_i}).
\]
It can be solved  by the scheme (\ref{eqn:BP_scheme_L1})  distributively by replacing $\mathcal A_{\boldsymbol \mu}$ in (\ref{eqn:BP_scheme_L1_a}) with $\mathbb R^N_+$.

\gap

{\bf Case (c)}: $E=I_N$, and $\Ccal$ is a box constraint set with $0\in \Ccal$ given right above (\ref{eqn:q_i}). To solve the reduced dual problem given by (\ref{eqn:BP_dual_box_constraint_L1}) with the variable $y$ only, let $J_i:\mathbb R^m \rightarrow \mathbb R$ be
$
   J_i (\mathbf y_i)  :=  \frac{b^T \mathbf y_i}{p} \,  + \frac{\alpha}{2}  \sum_{j \in \Ical_i} \theta_j \circ \Big( - \frac{1}{\alpha}  S \big( \big( (A_{\bullet \Ical_i} )^T \mathbf y_i \big)_j \big) \Big)
$
such that an equivalent form of the dual problem is: $\min_{\mathbf y \in \mathcal A_{\mathbf y} } \sum^p_{i=1} J_i(\mathbf y_i)$, which can also be solved via the scheme (\ref{eqn:BP_scheme_L1}) by removing $\mathcal A_{\boldsymbol \mu}$ from (\ref{eqn:BP_scheme_L1_a}).

\gap

{\bf Case (d)}: $E=\begin{bmatrix} I_N \\ \gamma D_1 \end{bmatrix}$, where $D_1 \in \mathbb R^{(N-1)\times N}$ is the first order difference matrix, and $\gamma$ is a positive constant. This case arises from the fused LASSO with $F=\gamma D_1$ and $\wt v =(\wt v_1, \ldots, \wt v_{N-1}) \in \mathbb R^{N-1}$ (cf. Section~\ref{sect:problem_formulation_properties}). We first consider a general polyhedral set $\Ccal$ with a small $\ell$.
Let $n_s:=\sum^s_{i=1} |\Ical_i|$ for $s=1, \ldots, p$.
Without loss of generality, let $\Ical_1=\{1, \ldots, n_1\}$, and $\Ical_{i+1}=\{ n_i+1, \ldots,  n_i+|\Ical_{i+1}|\}$ for each $i =1, \ldots, p-1$. We also assume that $(i, i+1) \in \mathcal E$ for any $i=1, \ldots, p-1$.
%
%Letting $F=\gamma D_1$ and ,
%
We introduce more notation.
Let $\mathbf v_1:=\wt v_{\Ical_1}$, $\mathbf v_i:=(\wt v_{n_{i-1}}, \wt v_{\Ical_i})$ for each $i=2, \ldots, p-1$, and $\mathbf v_p :=(\wt v_i: i=n_{p-1}, \ldots, N-1)$. Thus for $i=1, \ldots, p-1$, $\mathbf v_{i}$ and $\mathbf v_{i+1}$ overlap on one variable $\wt v_{n_i}$. Let  $\mathbf v:=(\mathbf v_i)^p_{i=1} \in \mathbb R^{N+p-2}$.
Further, let $r_1:=|\Ical_1|$, $r_i:=|\Ical_i|+1$ for each $i=2, \ldots, p-1$, and $r_p:=|\Ical_p|$.
Define $J_i:\mathbb R^m \times \mathbb R^\ell \times \mathbb R^{r_i} \rightarrow \mathbb R$ as
\[
   J_i (\mathbf y_i, \boldsymbol \mu_i, \mathbf v_i ) \, := \, \frac{b^T \mathbf y_i + d^T \boldsymbol \mu_i}{p}  + \frac{1}{2\alpha} \Big \| S\big(- (A_{\bullet \Ical_i})^T \mathbf y_i - ( C_{\bullet \Ical_i})^T {\boldsymbol\mu}_i  -  \gamma (D_1^T)_{\Ical_i \bullet} \mathbf v_i \big) \Big \|^2_2, \quad i=1, \ldots, p.
\]
Due to the structure of $D_1$ and the network topology assumption (i.e., $(i, i+1) \in \mathcal E$ for any $i=1, \ldots, p-1$), $J_i(\mathbf y_i, \boldsymbol \mu_i,\mathbf v_i)$'s are locally coupled \cite{HuXiaoLiu_CDC18}. Hence,  the dual problem (\ref{eqn:BP_dual_L1_fuse_Lasso}) can be equivalently written as the following locally coupled convex minimization problem:
\[
    \min_{(\mathbf y, \boldsymbol\mu) \in \mathcal A_{\mathbf y} \times \mathcal A_{\boldsymbol \mu}, \,  \|\mathbf v_i\|_\infty \le 1, \, i=1, \ldots, p } \, \sum^p_{i=1} J_i(\mathbf y_i, \boldsymbol \mu_i, \mathbf v_i), \ \ \mbox{ subject to }  \ \
    %
    %\|\mathbf v_i\|_\infty \le 1, \forall i=1, \ldots, p;
    %
    (\mathbf v_i)_{r_i}=(\mathbf v_{i+1})_1, \ \forall \, i=1, \ldots, p-1.
\]
Let $B_\infty(0, 1):=\{ \mathbf v \, | \, \| \mathbf v \|_\infty \le 1\}$ and $\mathcal A_C:=\{ \mathbf v \in \mathbb R^{N+p-2} \, | \, (\mathbf v_i)_{r_i}=(\mathbf v_{i+1})_1, \ \forall \, i=1, \ldots, p-1 \}$. The following three-operator splitting scheme \cite[Algorithm 1]{DavisYin_SVA17} can be used for distributed computation:
\begin{subequations} \label{eqn:BP_scheme_fused_Lasso}
\begin{eqnarray}
   \wt{\mathbf w}^k & =  & \Pi_{\mathcal A_{\mathbf y} \times \mathcal A_{\boldsymbol \mu}\times \mathcal A_C}(\mathbf z^{k}),
%
%   \wt{\mathbf z}^{k}_{\mathbf v} & =  & \Pi_{\mathcal A_C}(\mathbf z^{k}_{\mathbf v}), \qquad
%    \wt{\mathbf w}^k=( \mathbf z^k_\mathbf y, \mathbf z^k_{\boldsymbol \mu}, \wt{\mathbf z}^{k}_{\mathbf %v}),
%
   \label{eqn:BP_scheme_fused_Lasso_a}\\
   \wh{\mathbf w}^{k} & = & \Pi_{\mathbb R^{m p} \times \mathbb R^{\ell p} \times B_\infty(0, 1)} \Big(2 \wt{\mathbf w}^k- \mathbf z^k - \eta \sum^p_{i=1} \nabla J_i\big( (\wt{\mathbf w}^k)_i \big) \Big),  \label{eqn:BP_scheme_fused_Lasso_b} \\
   \mathbf z^{k+1} & = & \mathbf z^k + \lambda \big(  \wh{\mathbf w}^{k} - \wt{\mathbf w}^k \big), \label{eqn:BP_scheme_fused_Lasso_c}
\end{eqnarray}
\end{subequations}
where $\mathbf z^k=(\mathbf z^k_\mathbf y, \mathbf z^k_{\boldsymbol \mu}, {\mathbf z}^{k}_{\mathbf v}) \in \mathbb R^{mp}\times \mathbb R^{\ell p} \times \mathbb R^{N+p-2}$, and $\eta, \lambda$ are suitable positive constants depending on the Lipschitz constant of $\sum^p_{i=1}\nabla J_i$; see \cite[Thoerem 1]{DavisYin_SVA17} for details. For a distributed implementation of this scheme, each  agent $i$ has the local variable $(\mathbf z^{k}_i, \wt{\mathbf w}^k_i, \wh{\mathbf w}^k_i)$, and it is easy to see that the projections in (\ref{eqn:BP_scheme_fused_Lasso_a}) and (\ref{eqn:BP_scheme_fused_Lasso_b}) can be computed   distributively due to the separable structure of $J_i$'s and $B_\infty(0, 1)$ using the distributed averaging and other techniques. The scheme (\ref{eqn:BP_scheme_fused_Lasso}) yields a sequence that converges to $(\wt{ \mathbf w}_*,  \wh{ \mathbf w}_*, \mathbf z_*)$. A dual solution $(y_*, \mu_*, \wt v_*)$ can be retrieved from $\wh{ \mathbf w}_*$ in a similar way as shown in Case (a). Finally,  the primal solution $(x^*_{\Ical_i})^p_{i=1}$ is obtained using (\ref{eqn:solution_x*_fused_Lasso}).

Column partition based distributed schemes similar to (\ref{eqn:BP_scheme_fused_Lasso}) can be developed to the decoupled constraint set given by (\ref{eqn:polyhedral_set_C}) and a box constraint set. Moreover,  similar schemes can be developed for the generalized total variation denoising and $\ell_1$-trend filtering where $E=\lambda D_1$ or $E=\lambda D_2$ with $\lambda>0$.

%\gap
%
%\gap
%

%-----------------------------------------------------
%
\subsubsection{Regularized BP-like Problems with the Norm from Group LASSO}

Let $\|x\|_\star :=\sum^p_{i=1} \|x_{\Ical_i}\|_2$, and its dual norm $\| x \|_\diamond = \max_{i=1,\ldots, p} \|x_{\Ical_i} \|_2$.
We assume that exact regularization holds if needed; see Section~\ref{subsect:exact_reg_group_BP}.
Consider $E=I_N$ and a general polyhedral set $\Ccal$.
%
%Since $\| S_{\|\cdot \|_2}(w)\|^2_2= [ (\|w\|_2 -1)_+ ]^2$,
%
The dual problem  (\ref{eqn:BP_like_group_Lasso}) can be written as the  separable convex program: $\min_{(\mathbf y, \boldsymbol\mu) \in \mathcal A_{\mathbf y} \times \mathcal A_{\boldsymbol \mu} } \, \sum^p_{i=1} J_i(\mathbf y_i, \boldsymbol\mu_i)$, where
\[
   J_i (\mathbf y_i, {\boldsymbol\mu}_i) \, := \, \frac{b^T \mathbf y_i + d^T {\boldsymbol\mu}_i}{p} \,  + \frac{1}{2\alpha} \Big[  \big(\big\| (A_{\bullet \Ical_i})^T \mathbf y_i + ( C_{\bullet \Ical_i})^T {\boldsymbol\mu}_i \big\|_2 -1 \big)_+  \Big]^2, \qquad \forall \ i=1, \ldots, p,
\]
and $\mathcal A_{\mathbf y}, \mathcal A_{\boldsymbol \mu}$ are defined in Case (a) in Section~\ref{subsubsect:BP_scheme_L1}. Thus the distributed scheme (\ref{eqn:BP_scheme_L1}) can be applied.

When $\mathcal C$ is a box constraint set, consider the dual problem (\ref{eqn:BP_dual_group_Lasso_box_constr}).  By introducing $p$ copies of $y$'s given by $\mathbf y_i$ and imposing the consensus condition on $\mathbf y_i$'s, this problem can be converted to  a convex  program of the variable $(\mathbf y_i, v_{\Ical_i})^p_{i=1}$ with a separable objective function and separable constraint sets which have nonempty interiors. Thus by Slater's condition, the D-R scheme or three-operator splitting based column distributed schemes similar to (\ref{eqn:BP_scheme_fused_Lasso}) can be developed. If, in addition, $\mathcal C$ is a cone, the dual problems can be further reduced to unconstrained problems of the variable $y$ only, e.g.,  those for $\Ccal=\mathbb R^N$ and $\Ccal=\mathbb R^N_+$ given in Case (b) of Section~\ref{subsect:dual_group_Lasso}.  These problems can be formulated as consensus convex programs and solved by column partition based distributed schemes. The primal solution $x^*_{\Ical_i}$ can be computed distributively using a dual solution $y_*$ and the operator $S_{\|\cdot\|_2}$ (cf. Section~\ref{subsect:dual_group_Lasso}). We omit these details here.
%
% since they resemble the previous developments in Section~\ref{subsubsect:BP_scheme_L1}.

%------------------------------------------------------------------------------------------------
%
\subsection{Two-stage, Column Partition based Distributed Algorithms for LASSO-like and BPDN-like Problems: A Dual Approach} \label{subsect:algorithm_Lasso_BPDN}

The LASSO-like problem (\ref{eqn:Lasso_primal}) and the BPDN-like problem (\ref{eqn:BPDN_primal}) are not exactly regularized in general (cf. Section~\ref{subsect:fail_exact_reg_Lasso_BPDN}). Their objective functions or constraints are densely coupled without separable or locally coupled structure, making the development of column partition based distributed schemes particularly difficult. By leveraging their solution properties, we develop dual based two-stage distributed schemes.

We first outline key ideas of the two-stage distributed schemes. It follows from Proposition~\ref{prop:Lasso_to_BP} that if $A x_*$ is known for a minimizer $x_*$ of the LASSO or BPDN, then an exact primal solution can be solved by a regularized BP-like problem shown in Section~\ref{subsect:Distributed_BP} using column partition of $A$, assuming that exact regularization holds.
 To find $A x_*$, we deduce from Lemmas~\ref{lem:primal_dual_Lasso} and \ref{lem:primal_dual_BPDN} that $A x_*= b + y_*$ or $A x_* = b + \frac{\sigma y_*}{\|y_*\|_2}$, where $y_*$ is a dual solution of the LASSO or BPDN. Since the dual problems of LASSO and BPDN can be solved distributively using column partition of $A$, this yields the two-stage distributed schemes; see Algorithm~\ref{algo:distributed_two_stage}.

\begin{algorithm}%[t]
%=====================================================================================
\caption{Two-stage Distributed Algorithm for LASSO-like (resp. BPDN-like) Problem}
\begin{algorithmic}[1]
\label{algo:distributed_two_stage}
%=====================================================================================
\STATE Initialization
\STATE \fbox{Stage 1} Compute a dual solution $y_*$ of the LASSO-like problem (\ref{eqn:Lasso_primal})  (resp. BPDN-like problem (\ref{eqn:BPDN_primal}))
  using a column partition based distributed scheme;
\STATE \fbox{Stage 2} Solve the following regularized BP-like problem for a sufficiently small $\alpha>0$ using $y_*$ and a column partition based distributed scheme:
\begin{equation} \label{eqn:BP_Lasso}
  \mbox{BP}_{\mbox{LASSO}}: \quad \min_{x\in \mathbb R^N} \, \quad  \|E x\|_\star+\frac{\alpha}{2}\|x\|^2_2, \ \ \mbox{ subject to } \ Ax = b+y_*, \ \ x \in \Ccal
\end{equation}
or
\begin{equation} \label{eqn:BP_BPDN}
  \mbox{BP}_{\mbox{BPDN}}: \quad \min_{x \in \mathbb R^N} \, \quad  \|E x\|_\star+\frac{\alpha}{2}\|x\|^2_2, \ \ \mbox{ subject to } \ Ax = b+ \frac{\sigma y_*}{\| y_*\|_2}, \ \ \ x \in \Ccal
\end{equation}

\STATE Output: obtain the subvectors $x^*_{\Ical_i}$ for each $i=1, \ldots, p$
%
%
% \STATE Stage 2: Using algorithm \ref{algo:distributed_splitting_bp}, obtain a solution to %$\mbox{BP}_{LASSO}$, which is indeed a solution to LASSO.

%=====================================================================================
\end{algorithmic}
%=====================================================================================
\end{algorithm}

We discuss column partition based distributed schemes indicated in Stage 1 as follows; distributed schemes in Stage 2 have been discussed in Section~\ref{subsect:Distributed_BP}. For each fused problem involving the matrix $D_1$ discussed below, we assume that its graph satisfies  $(i, i+1)\in \mathcal E, \forall \, i=1, \ldots, p-1$.

%% (when $\|\cdot \|_\star=\|\cdot \|_1$).

%----------------------------------------------------------------------------------
%
\subsubsection{Column Partition based Distributed Algorithms for the Dual of LASSO-like Problem} \label{subsubsect:scheme_dual_Lasso}

Let $\Ccal$ be a general polyhedral set given by $C x \le d$ unless otherwise stated. Consider $\|\cdot\|_\star=\|\cdot \|_1$ first.

{\bf Case (a)}: $E=\lambda I_N$ for a positive constant $\lambda$. Suppose $\ell$ is small. Recall that $\mathbf y =(\mathbf y_1, \ldots, \mathbf y_p) \in \mathbb R^{mp}$, and $\boldsymbol \mu=(\boldsymbol \mu_1, \ldots, \boldsymbol \mu_p) \in \mathbb R^{\ell p}$. Define the set $\Wcal:=\{ (\mathbf y, \boldsymbol \mu) \, | \, \| (A_{\bullet \Ical_i})^T \mathbf y_i + (C_{\bullet \Ical_i})^T \boldsymbol \mu_i \|_\infty \le \lambda, \ \forall \, i=1, \ldots, p \}$ and the functions $J_i(\mathbf y_i, \boldsymbol \mu_i) := \big(\frac{\|\mathbf y_i \|^2_2}{2} + b^T \mathbf y_i+ d^T  \boldsymbol \mu_i)/p$ for $i=1, \ldots, p$. Using $\mathcal A_{\mathbf y}$ and $\mathcal A_{\boldsymbol \mu}$ introduced in Section~\ref{subsubsect:BP_scheme_L1}, the dual problem (\ref{eqn:dual_Lasso_L1}) can be written as the following consensus convex program:
\[
   \min_{(\mathbf y, \boldsymbol\mu) \in \mathcal A_{\mathbf y} \times \mathcal A_{\boldsymbol \mu} } \, \sum^p_{i=1} J_i(\mathbf y_i, \boldsymbol\mu_i), \quad \mbox{ subject to } \quad (\mathbf y, \boldsymbol\mu) \in \Wcal.
\]
A three-operator splitting scheme \cite[Algorithm 1]{DavisYin_SVA17} can be used for solving this problem:
\begin{equation} \label{eqn:scheme_Lasso_L1}
   %%%\wt{\mathbf z}^{k}_{\mathbf v} & =  & \Pi_{\mathcal W}(\mathbf z^{k}_{\mathbf v}), \qquad
    \wt{\mathbf w}^k =   \Pi_{\mathcal A_{\mathbf y} \times \mathcal A_{\boldsymbol \mu}}(\mathbf z^{k} ),  \ \ \ \wh{\mathbf w}^{k}  =  \Pi_{\mathcal W  } \Big(2 \wt{\mathbf w}^k- \mathbf z^k - \eta \sum^p_{i=1} \nabla J_i\big( (\wt{\mathbf w}^k)_i \big) \Big),  \ \ \ \mathbf z^{k+1}  =  \mathbf z^k + \lambda \big(  \wh{\mathbf w}^{k} - \wt{\mathbf w}^k \big),
\end{equation}
where $\mathbf z^k=(\mathbf z^k_\mathbf y, \mathbf z^k_{\boldsymbol \mu}) \in \mathbb R^{mp}\times \mathbb R^{\ell p}$, and $\eta>0$ and $\lambda>0$ are suitable constants. Due to the separable structure of $\Wcal$ and $J_i$'s, this scheme can be implemented distributively via similar techniques discussed  in Section~\ref{subsubsect:BP_scheme_L1}. It can be extended to the decoupled constraint set in (\ref{eqn:polyhedral_set_C}) by replacing $\mathcal A_{\boldsymbol \mu}$ with $\mathbb R^N_+$. For some important special cases, e.g., $\Ccal=\mathbb R^N$ or $\Ccal=\mathbb R^N_+$, the variable $\mu$ or $\boldsymbol\mu$ can be removed; see the discussions below (\ref{eqn:dual_Lasso_L1}). Especially, the resulting consensus convex programs for $\Ccal=\mathbb R^N$ and $\Ccal=\mathbb R^N_+$ have strongly convex objective functions. Hence, an accelerated operator splitting method \cite[Algorithm 2]{DavisYin_SVA17} can be used to develop column partition based distributed schemes  with the convergence rate $O(1/k)$.
Since $\mathcal W$ is separable and polyhedral, an alternative scheme for (\ref{eqn:scheme_Lasso_L1}) is to drop the constraint $\Wcal$, replace $J_i$ by the sum of $J_i$ and the  indicator function of the corresponding $\mathcal W_i$, and then use the D-R scheme.

\begin{remark} \rm \label{remark:Lasso_scaled_BP}
  When $E=\lambda I_N$ and $\Ccal$ is a polyhedral cone (i.e., $d=0$), let $y_*$ be the unique dual solution of the problem (\ref{eqn:Lasso_primal}) obtained from the first stage. We discuss a variation of the BP formulation in the second stage by exploiting solution properties of (\ref{eqn:Lasso_primal}).
  In view of Lemma~\ref{lem:primal_dual_Lasso} and $E=\lambda I_N$, we deduce that $A x_* = b+ y_*$ and $\lambda \| x_*\|_1 = -y^T_* (b+ y_*)$ for any minimizer $x_*$ of the problem (\ref{eqn:Lasso_primal}), noting that $\| x_*\|_1$ is constant on the solution set by  Proposition~\ref{prop:soln_property}. Suppose $x_* \ne 0$ or equivalently $b+y_* \ne 0$.  Then
$
 \|x_*\|_1 = -\frac{1}{\lambda} y_*^T (y_* + b)$, and  $\frac{ A x_*}{ \|x_*\|_1} = -\frac{\lambda (y_* + b)}{ y_*^T (y_* + b) }$.
Consider the scaled regularized BP for a small $\alpha>0$:
\begin{equation} \label{eqn:scaled_BP}
     \mbox{Scaled r-BP}: \quad \min_{z \in \mathbb R^N} \ \| z \|_1 + \frac{\alpha}{2} \| z \|^2_2 \quad \mbox{ subject to } \quad A z = - \frac{\lambda (y_* + b)}{ y^T_* (y_* + b) }, \quad z \in \Ccal.
\end{equation}
Once  the unique minimizer $z_*$ of the above regularized BP is obtained (satisfying $\|z_*\|_1=1$), then the least 2-norm minimizer $x_*$ is given by $x_* = -\frac{1}{\lambda} y^T_* (y_* + b) z_*$.

The advantages of using the scaled regularized BP (\ref{eqn:scaled_BP}) are two folds. First, since $\|x_*\|_1$ may be small or near zero in some applications, a direct application of the $\mbox{BP}_{\mbox{LASSO}}$ using $y_*$ in Algorithm~\ref{algo:distributed_two_stage} may be sensitive to round-off errors. However, using the scaled BP (\ref{eqn:scaled_BP}) can avoid such a problem. Second, the suitable value of $\alpha$ achieving exact regularization is often unknown, despite the existential result in theory. A simple rule  for choosing such an $\alpha$ is given in \cite{LaiYin_SIAMG13}: $\alpha \le \frac{1}{10 \| \wh x \|_\infty}$, where  $\wh x \ne 0$ is a  sparse vector to be recovered. Assuming that the solution set of the problem (\ref{eqn:Lasso_primal}) contains $\wh x$, an estimate of the upper bound of $\alpha$ is $\frac{1}{10 \| \wh x \|_1}$ in view of $\| \wh x \|_1 \ge \| \wh x \|_\infty$. Again, when $\|x_*\|_1$ is small, this upper bound may have a large numerical error. Instead, when the scaled BP (\ref{eqn:scaled_BP}) is used, we can simply choose $\alpha \le \frac{1}{10}$.
\end{remark}

%%\gap

{\bf Case (b)}: $E=\begin{bmatrix} \lambda I_N \\ \gamma D_1 \end{bmatrix}$  for positive constants $\lambda$ and $ \gamma$, and $\ell$ is small. This case is an extension of the fused LASSO. To solve its dual problem in (\ref{eqn:dual_fused_Lasso_L1}) with $F=\gamma D_1$, recall the notation $\mathbf v$ and $\mathcal A_C$ introduced for Case (d) in Section~\ref{subsubsect:BP_scheme_L1}. Define the set $\mathcal U:=\{ (\mathbf y, \boldsymbol \mu, \mathbf v) \, | \, \| \mathbf v_i \|_\infty \le 1, \ \| (A_{\bullet \Ical_i})^T \mathbf y_i + (C_{\bullet \Ical_i})^T \boldsymbol \mu_i + \gamma (D_1)^T_{\Ical_i \bullet} \mathbf v_i\|_\infty \le \lambda, \, \forall \, i=1, \ldots, p \}$ and the functions $J_i(\mathbf y_i, \boldsymbol \mu_i, \mathbf v_i) := \big(\frac{\|\mathbf y_i \|^2_2}{2} + b^T \mathbf y_i+ d^T  \boldsymbol \mu_i)/p$ for $i=1, \ldots, p$. Hence, the dual problem (\ref{eqn:dual_fused_Lasso_L1}) can be formulated as the  locally coupled convex program:
\[
   \min_{(\mathbf y, \boldsymbol\mu, \mathbf v) \in \mathcal A_{\mathbf y} \times \mathcal A_{\boldsymbol \mu} \times \mathcal A_C } \, \sum^p_{i=1} J_i(\mathbf y_i, \boldsymbol\mu_i, \mathbf v_i), \quad \mbox{ subject to } \quad (\mathbf y, \boldsymbol\mu, \mathbf v) \in \mathcal U.
\]
%
%Letting $\wt {\mathcal U}:=\mathcal U \cap (\mathbb R^{mp}\times \mathbb R^{\ell p} \times \mathcal A_C)$.
 %
Replacing the step (\ref{eqn:BP_scheme_fused_Lasso_b}) by $\wh{\mathbf w}^k = \Pi_{\mathcal U}(\mathbf z^{k})$, the scheme (\ref{eqn:BP_scheme_fused_Lasso}) can be applied. Since $\mathcal U$ is a decoupled constraint set, the projection $\Pi_{\mathcal U}$ can be computed distributively.
%
%using a D-R algorithm based distributed scheme (cf. \cite[Equation 7]{HuXiaoLiu_CDC18}).
%
This leads to a distributed scheme for the above convex program. These schemes can be extended to the generalized total variation denoising  and generalized $\ell_1$-trend filtering with $E=\lambda D_1$ or $E=\lambda D_2$.

\gap

We then consider the norm $\|x\|_\star=\sum^p_{i=1} \|x_{\Ical_i}\|_2$ arising from group LASSO. Suppose $E=\lambda I_N$ with $\lambda>0$. In view of the dual formulation (\ref{eqn:dual_Lasso_norm_group_Lasso}),  the distributed scheme (\ref{eqn:scheme_Lasso_L1}) can be applied by replacing $\Wcal$ with the set $\wt\Wcal:=\{ (\mathbf y, \boldsymbol \mu) \, | \, \| (A_{\bullet \Ical_i})^T \mathbf y_i + (C_{\bullet \Ical_i})^T \boldsymbol \mu_i \|_2 \le \lambda, \ \forall \, i=1, \ldots, p \}$, which has nonempty interior.
%

%---------------------------------------------------------------------------------------------
%
\subsubsection{Column Partition based Distributed Algorithms for the Dual of BPDN-like Problem}

Suppose the assumptions given below (\ref{eqn:BPDN_primal}) in Section~\ref{subsect:general_results} hold. Consider the dual problem (\ref{eqn:BPDN_dual}) with a general polyhedral set $\Ccal$. As shown in Lemma~\ref{lem:primal_dual_BPDN}, a dual solution $y_* \ne 0$ under these assumptions. Hence, the function $\|y\|_2$ is always differentiable near $y_*$. In what follows, consider $\|\cdot \|_\star=\| \cdot \|_1$ first.

{\bf Case (a)}: $E= I_N$. Suppose $\ell$ is small first. In light of the dual formulation (\ref{eqn:BPDN_dual}), it is easy to verify that the distributed scheme (\ref{eqn:scheme_Lasso_L1}) can be applied by setting $\lambda$ in $\mathcal W$ as one and replacing the functions $J_i$ with $\wt J_i(\mathbf y_i, \boldsymbol \mu_i) := \big(\sigma \|\mathbf y_i \|_2 + b^T \mathbf y_i+ d^T  \boldsymbol \mu_i)/p$ for $i=1, \ldots, p$.
 This scheme can be extended to the decoupled constraint set in (\ref{eqn:polyhedral_set_C}) by replacing $\mathcal A_{\boldsymbol \mu}$ with $\mathbb R^N_+$. When $\Ccal=\mathbb R^N$ or $\Ccal=\mathbb R^N_+$, the variable $\mu$ or $\boldsymbol\mu$ can be removed and the proposed scheme can be easily adapted for these cases; see the discussions below (\ref{eqn:dual_BPDN_L1}).
 Moreover, when $E= I_N$ and $\Ccal$ is a polyhedral cone (i.e., $d$=0), it follows from Lemma~\ref{lem:primal_dual_BPDN} and the assumption that the optimal value of (\ref{eqn:BPDN_primal}) is positive that $-b^T y_* - \sigma \|y_*\|_2>0$. Hence, by a similar argument in Remark~\ref{remark:Lasso_scaled_BP}, we deduce that a primal solution  $x_*$ can be solved from the following scaled regularized BP using the (unique) dual solution $y_*$:
%%\begin{equation} \label{eqn:scaled_BP_BPDN}
\[
   \mbox{Scaled r-BP}: \quad \min_{z \in \mathbb R^N} \ \| z \|_1 + \frac{\alpha}{2} \| z \|^2_2 \quad \mbox{ subject to } \quad A z = - \frac{b + \frac{\sigma y_*}{\|y_*\|_2} }{b^T y_* + \sigma \|y_*\|_2}, \ \ z \in \Ccal.
\]
%%%\end{equation}
Once  the unique minimizer $z_*$ of the above regularized BP is obtained (satisfying $\|z_*\|_1=1$), then the least 2-norm minimizer $x_*$ of the BPDN is given by $x_* = -(b^T y_* + \sigma \|y_*\|_2) z_*$.

\gap

{\bf Case (b)}: $E=\begin{bmatrix} I_N \\ \gamma D_1 \end{bmatrix}$  for a positive constant $\gamma$, and $\ell$ is small. To solve its dual problem in (\ref{eqn:dual_fused_BPDN_L1}) with $F=\gamma D_1$, define the set $\wt{\mathcal U}:=\{ (\mathbf y, \boldsymbol \mu, \mathbf v) \, | \, \| \mathbf v_i \|_2 \le 1, \ \| (A_{\bullet \Ical_i})^T \mathbf y_i + (C_{\bullet \Ical_i})^T \boldsymbol \mu_i + \gamma (D_1)^T_{\Ical_i \bullet} \mathbf v_i\|_2 \le 1, \, \forall \, i=1, \ldots, p \}$ and the functions $\wt J_i(\mathbf y_i, \boldsymbol \mu_i, \mathbf v_i) := \big(\sigma \|\mathbf y_i \|_2 + b^T \mathbf y_i+ d^T  \boldsymbol \mu_i)/p$ for $i=1, \ldots, p$. Thus $\wt{\mathcal U}$ has nonempty interior.
The dual problem (\ref{eqn:dual_fused_BPDN_L1}) can be formulated as the  locally coupled convex program:
\[
   \min_{(\mathbf y, \boldsymbol\mu, \mathbf v) \in \mathcal A_{\mathbf y} \times \mathcal A_{\boldsymbol \mu} \times \mathcal A_C } \, \sum^p_{i=1} \wt J_i(\mathbf y_i, \boldsymbol\mu_i, \mathbf v_i), \quad \mbox{ subject to } \quad (\mathbf y, \boldsymbol\mu, \mathbf v) \in \wt {\mathcal U}.
\]
By a similar argument for Case (b) of Section~\ref{subsubsect:scheme_dual_Lasso},  the scheme (\ref{eqn:BP_scheme_fused_Lasso}) can be applied by replacing the step (\ref{eqn:BP_scheme_fused_Lasso_b}) with $\wh{\mathbf w}^k = \Pi_{\wt {\mathcal U}}(\mathbf z^{k})$ and $J_i$'s with $\wt J_i$'s.

We then consider the norm $\|x\|_\star=\sum^p_{i=1} \|x_{\Ical_i}\|_2$ arising from group LASSO. Suppose $E=I_N$. Similarly, the distributed scheme (\ref{eqn:scheme_Lasso_L1}) can be applied to the dual formulation (\ref{eqn:dual_BPDN_norm_group_Lasso}) by replacing $J_i$'s with $\wt J_i$'s defined above and $\Wcal$ with the set $\wt\Wcal:=\{ (\mathbf y, \boldsymbol \mu) \, | \, \| (A_{\bullet \Ical_i})^T \mathbf y_i + (C_{\bullet \Ical_i})^T \boldsymbol \mu_i \|_2 \le 1, \ \forall \, i=1, \ldots, p \}$.

%---------------------------------------------------------------------------------------------
%
\section{Overall Convergence of the  Two-stage  Distributed Algorithms} \label{sect:overall_converg}

In this section, we analyze the overall convergence of the two-stage distributed algorithms proposed in Section~\ref{sect:distributed_schemes}, assuming that a distributed algorithm in each stage is convergent.
To motivate the overall convergence analysis, it is noted that an  algorithm of the first-stage generates an approximate solution $y^k$ to the solution $y_*$ of the dual problem, and this raises the question of whether using this approximate solution in the second stage  leads to significant discrepancy when solving the second-stage problem (\ref{eqn:BP_Lasso}) or (\ref{eqn:BP_BPDN}). Inspired by this question and its implication to the overall convergence of the two-stage algorithms, we  establishes the continuity of the solution of the regularized BP-like problem (\ref{eqn:BP_primal}) in $b$, which is closely related to sensitivity analysis of the problem  (\ref{eqn:BP_primal}).  We first present some technical preliminaries.

\begin{lemma} \label{lem:uniform_bdd_subdf_norm}
 Let $\|\cdot\|_\star$ be a norm on $\mathbb R^n$ and $\|\cdot \|_\diamond$ be its dual norm. Then for any $x \in \mathbb R^n$, $\|v\|_\diamond \le 1$ for any $v \in \partial \| x \|_\star$.
\end{lemma}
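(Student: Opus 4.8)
The plan is to argue directly from the definition of the subgradient together with the definition of the dual norm, rather than invoking the (equivalent but not yet established in this excerpt) characterization $\partial\|x\|_\star = \{v : \|v\|_\diamond \le 1,\ v^T x = \|x\|_\star\}$. Fixing $x \in \mathbb R^n$ and an arbitrary $v \in \partial\|x\|_\star$, I would start from the subgradient inequality
\[
   \|y\|_\star \ \ge \ \|x\|_\star + v^T(y - x), \qquad \forall \, y \in \mathbb R^n.
\]

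The key step is to specialize this inequality to a well-chosen family of test vectors. Setting $y = x + w$ for an arbitrary $w \in \mathbb R^n$ gives $\|x + w\|_\star \ge \|x\|_\star + v^T w$. Combining this with the triangle inequality $\|x + w\|_\star \le \|x\|_\star + \|w\|_\star$ and cancelling the common term $\|x\|_\star$, I obtain the global bound $v^T w \le \|w\|_\star$ for every $w \in \mathbb R^n$. Since $w$ ranges over all of $\mathbb R^n$, this bound is not merely local around $x$.

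The last step is to restrict attention to those $w$ with $\|w\|_\star \le 1$ and take the supremum. By the definition of the dual norm recalled in Section~\ref{sect:dual_problems}, namely $\|v\|_\diamond = \sup\{\, v^T w : \|w\|_\star \le 1\,\}$, this yields $\|v\|_\diamond \le 1$, which is the claim. The special case $x = 0$ is covered automatically, and indeed matches the already-noted fact that $\partial\|0\|_\star = B_\diamond(0,1)$.

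This argument is elementary, so there is no genuinely hard step; the only thing requiring care is bookkeeping, in particular applying the subgradient inequality in the correct direction and verifying that the test vector $w$ sweeps out all of $\mathbb R^n$ so that the intermediate estimate $v^T w \le \|w\|_\star$ is valid for every $w$ before the supremum is taken. I would present the four lines above essentially verbatim, since no further machinery is needed.
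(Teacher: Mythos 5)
Your proof is correct and follows essentially the same route as the paper: both start from the subgradient inequality and bound $v^T w \le \|w\|_\star$ via the triangle inequality (the paper phrases this through the reverse triangle inequality $\|y-x\|_\star \ge \|y\|_\star - \|x\|_\star$, which is the same estimate), then conclude from the definition of the dual norm. No gaps.
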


\begin{proof}
 Fix $x \in \mathbb R^n$, and let $v \in  \partial \| x \|_\star$. Hence, $\|y\|_\star \ge \|x \|_\star + \langle v, y - x\rangle $ for all $y\in \mathbb R^n$. Since $\| y - x\|_\star \ge \big | \| y\|_\star - \|x\|_\star \big| \ge \langle v, y - x\rangle$, we have  $\langle v, \frac{y - x}{\|y -x \|_\star} \rangle \le 1$ for any $y \ne x$. This shows that $\|v\|_\diamond \le 1$.
\end{proof}

Another result we will use  is concerned with the Lipschitz property of the linear complementarity problem (LCP) under certain singleton property. Specifically, consider the LCP $(q, M)$: $0\le u \perp M u + q \ge 0 $ for a given matrix $M \in \mathbb R^{n\times n}$ and a vector $q \in \mathbb R^n$. Let $\SOL(q, M)$ denote its solution set. The following theorem is an extension of  a well-known fact in the LCP and variational inequality theory, e.g.,   \cite[Propositioin 4.2.2]{FPang_book03},  \cite[Theorem 10]{GowdaS_MP96}, and \cite{ShenPang_ACC07}.

\begin{theorem} \label{thm:LCP_singleton}
Consider the LCP $(q, M)$. Suppose a matrix $E \in \mathbb R^{p\times n}$ and a set $\Wcal \subseteq \mathbb R^n$ are such that  for any $q \in \Wcal$, $\SOL(q, M)$ is nonempty  and $E\SOL(q, M)$ is singleton. The following hold:
\begin{itemize}
  \item [(i)] $E\SOL(\cdot, M)$ is locally Lipschitz at each $q \in \Wcal$, i.e., there exist a constant $L_q>0$ and a neighborhood $\mathcal N$ of $q$ such that $\|E\SOL(q', M) - E\SOL(q, M)\|\le L_q \|q'-q\|$ for any $q'\in \mathcal N \cap \Wcal$;
  \item [(ii)] If $\Wcal$ is a convex set, then $E\SOL(\cdot, M)$ is (globally) Lipschitz continuous on $\Wcal$, i.e., there exists a constant $L>0$ such that $\|E\SOL(q, M) - E\SOL(q', M)\|\le L \|q-q'\|$ for all $q', q \in \Wcal$.
\end{itemize}
%
%Then $E\SOL(\cdot, M)$ is a Lipschitz continuous piecewise linear  function on $\Wcal$, i.e., there exists a %constant $L>0$ such that $\|E\SOL(q, M) - E\SOL(q', M)\|\le L \|q-q'\|$ for all $q, q' \in \Wcal$.
%
\end{theorem}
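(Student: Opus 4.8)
The plan is to recognize $\SOL(\cdot,M)$ as a \emph{polyhedral multifunction} and to invoke the upper Lipschitz continuity of such maps, then to use the singleton hypothesis on the image $E\,\SOL(\cdot,M)$ to upgrade the resulting one-sided set inclusion to a genuine two-sided Lipschitz estimate. First I would verify that the graph $\{(q,u)\,:\,0\le u\perp Mu+q\ge 0\}$ is a finite union of polyhedral sets. For each index set $\alpha\subseteq\{1,\dots,n\}$ with complement $\bar\alpha$, set $G_\alpha:=\{(q,u)\,:\,u\ge 0,\ Mu+q\ge 0,\ (Mu+q)_\alpha=0,\ u_{\bar\alpha}=0\}$, which is polyhedral in $(q,u)$ since it is cut out by finitely many linear equalities and inequalities. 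Because the complementarity condition forces, for each $i$, either $u_i=0$ or $(Mu+q)_i=0$, the graph equals $\bigcup_\alpha G_\alpha$. Hence $\SOL(\cdot,M)$ is a polyhedral multifunction, and the upper Lipschitz property of polyhedral multifunctions (see \cite[Proposition 4.2.2]{FPang_book03}) supplies a \emph{uniform} constant $L_0>0$ such that for each $q_0$ there is a neighborhood $\mathcal N$ of $q_0$ with $\SOL(q,M)\subseteq \SOL(q_0,M)+L_0\|q-q_0\|\,\mathbb B$ for all $q\in\mathcal N$, where $\mathbb B$ is the closed unit ball.

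For part (i), fix $q_0\in\Wcal$ with the neighborhood $\mathcal N$ as above, and write $g(q)$ for the unique element of $E\,\SOL(q,M)$ when $q\in\Wcal$. For $q\in\mathcal N\cap\Wcal$ I would pick any $u\in\SOL(q,M)$ (nonempty by hypothesis); the inclusion yields $u=u_0+\delta$ with $u_0\in\SOL(q_0,M)$ and $\|\delta\|\le L_0\|q-q_0\|$. Since $Eu=g(q)$ and $Eu_0=g(q_0)$, both images being singletons, this gives $\|g(q)-g(q_0)\|=\|E\delta\|\le \|E\|\,L_0\,\|q-q_0\|$, the claimed local Lipschitz bound with $L_{q_0}:=\|E\|L_0$.

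The hard part, and the step I would be most careful about, is exactly this conversion: Robinson-type results for polyhedral multifunctions deliver only the \emph{one-sided} inclusion $\SOL(q,M)\subseteq\SOL(q_0,M)+\cdots$, i.e. outer/upper Lipschitz continuity of a set-valued map, and in general this is strictly weaker than Lipschitz continuity. It is the single-valuedness of $E\,\SOL(\cdot,M)$ \emph{at both} $q$ and $q_0$ that collapses the inclusion of the images $\{g(q)\}$ and $\{g(q_0)\}$ into a norm bound; this is precisely where the extension over the classical single-valued case \cite[Theorem 10]{GowdaS_MP96},\cite{ShenPang_ACC07} is used, since here only the $E$-image rather than $\SOL(q,M)$ itself need be a singleton.

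For part (ii) I would exploit that the modulus $L:=\|E\|L_0$ is independent of the base point $q_0$. Given $q,q'\in\Wcal$, convexity places the whole segment $\gamma(t)=(1-t)q+tq'$ in $\Wcal$, so $\phi(t):=g(\gamma(t))$ is continuous on $[0,1]$ and locally Lipschitz there with modulus $L\|q'-q\|$. A standard continuation argument then finishes: letting $t^\ast=\sup\{t\in[0,1]\,:\,\|\phi(t)-\phi(0)\|\le Lt\|q'-q\|\}$, the supremum is attained by continuity, and the uniform local estimate at $\gamma(t^\ast)$ shows $t^\ast$ cannot be less than $1$, whence $\|g(q')-g(q)\|=\|\phi(1)-\phi(0)\|\le L\|q'-q\|$. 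This yields global Lipschitz continuity on $\Wcal$ with constant $L$.
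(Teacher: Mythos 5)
Your proof is correct, and it follows exactly the route the paper intends: the paper states this theorem without proof, presenting it as an extension of the results it cites in \cite{FPang_book03}, \cite{GowdaS_MP96}, and \cite{ShenPang_ACC07}, and those citations point precisely to your argument --- the decomposition of the LCP solution graph into the polyhedra $G_\alpha$, Robinson's upper Lipschitz property of polyhedral multifunctions with a modulus $L_0$ that is uniform in the base point, and the observation that single-valuedness of $E\,\SOL(\cdot,M)$ at \emph{both} base points converts the one-sided inclusion $\SOL(q,M)\subseteq \SOL(q_0,M)+L_0\|q-q_0\|\,\mathbb B$ into the two-sided bound $\|g(q)-g(q_0)\|\le \|E\|L_0\|q-q_0\|$. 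Your continuation argument along the segment in part (ii), which works exactly because the local modulus is uniform and $\Wcal$ is convex, is the standard way to globalize the estimate, so nothing is missing.
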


%%%%%%%%%%%%%%%%%%%%%%%%%%%%%%%%%%%%%%%%%%%%%%%%%%%%%%%%%%%%%%%%%%%%%%%%%%%%%%%%%%%%%

We apply the above results to the regularized BP-like problem subject to a generic polyhedral constraint, in addition to the linear equality constraint, i.e.,
\begin{equation}  \label{eqn:BP_gen_polyhedral}
  \min_{x \in \mathcal C, \, A x = b} \| E x\|_\star + \frac{\alpha}{2} \|x \|^2_2,
\end{equation}
where $\alpha$ is a positive constant, $E \in \mathbb R^{r \times N}$, $A \in \mathbb R^{m\times N}$, the polyhedral set $\mathcal C:=\{ x \in \mathbb R^N \, | \, Cx \le d \}$ for some $C \in \mathbb R^{\ell \times N}$ and $d \in \mathbb R^\ell$, and $b\in \mathbb R^m$ with $b \in A \mathcal C:=\{ A x \, | \, x \in \mathcal C\}$. We shall show that its unique optimal solution is continuous in $b$, where we assume that $A \ne 0$ without loss of generality. To achieve this goal,
consider the necessary and sufficient optimality condition for the unique solution $x_*$ of (\ref{eqn:BP_gen_polyhedral}), namely, there exist (possibly non-unique) multipliers $\lambda \in \mathbb R^m$ and $\mu \in \mathbb R^\ell_+$ such that
 \begin{equation} \label{eqn:optimality_cond}
    0 \in E^T \partial \| E x_* \|_\star + \alpha \, x_* + A^T \lambda + C^T \mu, \quad A x_* = b, \quad 0 \le \mu \perp C x_* - d \le 0.
 \end{equation}
 When we need to emphasize the dependence of $x_*$ on $b$, we write it as $x_*(b)$ in the following development. For a given $b \in A \mathcal C$ and its corresponding unique minimizer $x_*$ of (\ref{eqn:BP_gen_polyhedral}), define the set
\[
 \Scal(x_*) \, := \, \Big\{ (w, \lambda, \mu) \ \big| \ w \in \partial \| E x_* \|_\star, \ \  E^T w + \alpha x_* + A^T \lambda + C^T \mu =0, \ \ 0 \le \mu \perp C x_* -d \le 0  \Big\}.
\]
This set contains all the sub-gradients $w$ and the multipliers $\lambda, \mu$ satisfying the optimality condition at $x_*$, and it is often unbounded due to possible unboundeness of $\lambda$ and $\mu$ (noting that by Lemma~\ref{lem:uniform_bdd_subdf_norm}, $w$'s are bounded). To overcome this difficulty in continuity analysis, we present the following proposition.

\begin{proposition} \label{prop:boundness_seq}
  The following hold for the minimization problem (\ref{eqn:BP_gen_polyhedral}):
 \begin{itemize}
   \item [(i)] Let $\mathcal B$ be a bounded set in $\mathbb R^m$. Then  $\{ x_*(b) \, | \, b \in A \mathcal C\, \cap \, \mathcal B \}$ is a bounded set;
   \item [(ii)] Let $(b^k)$ be a convergent sequence in $A \mathcal C \cap \mathcal B$. Then there exist a constant $\gamma>0$ and an index subsequence $(k_s)$ such that  for each $k_s$, there exists $(w^{k_s}, \lambda^{k_s}, \mu^{k_s}) \in \Scal(x_*(b^{k_s}))$ satisfying $\|(\lambda^{k_s}, \mu^{k_s})\| \le \gamma$.
 \end{itemize}
\end{proposition}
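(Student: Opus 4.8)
The plan is to handle the two parts separately: part (i) follows from coercivity of the regularized objective together with a uniform bound on a conveniently chosen feasible point, while part (ii) combines a pigeonhole argument on active constraint sets with Hoffman's polyhedral error bound to tame the a priori unbounded multipliers.

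For part (i), write $f(x) := \|E x\|_\star + \frac{\alpha}{2}\|x\|_2^2$ for the objective of (\ref{eqn:BP_gen_polyhedral}). The key observation is that it suffices to exhibit, for each $b \in A\mathcal{C} \cap \mathcal{B}$, \emph{some} feasible point whose objective value is bounded uniformly in $b$: since $x_*(b)$ is the minimizer, $\frac{\alpha}{2}\|x_*(b)\|_2^2 \le f(x_*(b)) \le f(\hat{x}(b))$ for any feasible $\hat{x}(b)$. I would take $\hat{x}(b)$ to be the minimum Euclidean-norm point of the feasible set $F(b) := \{x \mid A x = b,\ C x \le d\}$, which is nonempty precisely because $b \in A\mathcal{C}$. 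Invoking Hoffman's error bound for the polyhedral system defining $F(b)$, with a constant $\kappa$ depending only on $(A, C)$, and evaluating it at $x = 0$ gives $\|\hat{x}(b)\| = \operatorname{dist}(0, F(b)) \le \kappa\,(\|b\| + \|(-d)_+\|)$. Since $\mathcal{B}$ is bounded this is uniform in $b$, hence $f(\hat{x}(b))$ is uniformly bounded (using equivalence of norms and the operator norm of $E$), and the coercive term $\frac{\alpha}{2}\|x_*(b)\|_2^2$ is controlled. This proves (i).

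For part (ii), I first record the bounds already in hand: by Lemma~\ref{lem:uniform_bdd_subdf_norm} every subgradient $w^k \in \partial\|E x_*(b^k)\|_\star$ satisfies $\|w^k\|_\diamond \le 1$, so $(w^k)$ is bounded, and by part (i) the sequence $(x_*(b^k))$ is bounded. Therefore $r^k := E^T w^k + \alpha\,x_*(b^k)$ is bounded, and the stationarity relation defining $\mathcal{S}(x_*(b^k))$ reads $A^T \lambda^k + C^T \mu^k = -r^k$. The difficulty, which is the main obstacle, is that this equation alone does not bound $(\lambda^k, \mu^k)$: the matrix $[\,A^T\ C^T\,]$ may have a nontrivial kernel, so the multipliers can blow up along the sequence. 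The resolution is to exploit the complementarity structure.

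The argument then proceeds in two steps. Complementarity forces $\mu^k$ to be supported on the active set $\mathcal{A}(x_*(b^k)) := \{j \mid (C x_*(b^k))_j = d_j\}$, and since $\{1,\dots,\ell\}$ has only finitely many subsets, a pigeonhole argument produces a subsequence $(k_s)$ along which the active set equals a fixed set $\mathcal{A}$. Along it the relation becomes $A^T \lambda^{k_s} + (C_{\mathcal{A}\bullet})^T \nu^{k_s} = -r^{k_s}$ with $\nu^{k_s} := \mu^{k_s}_{\mathcal{A}} \ge 0$, a system that is consistent because the original multipliers solve it. I would apply Hoffman's error bound a second time, now to the parametric polyhedral system $\{(\lambda,\nu) \mid A^T\lambda + (C_{\mathcal{A}\bullet})^T\nu = s,\ \nu \ge 0\}$ with parameter $s = -r^{k_s}$; evaluating at the origin yields a solution with $\|(\lambda^{k_s}, \nu^{k_s})\| \le \kappa'\|r^{k_s}\|$, uniformly bounded since $(r^{k_s})$ is bounded. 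Extending $\nu^{k_s}$ by zero off $\mathcal{A}$ gives $\mu^{k_s} \ge 0$ that remains complementary to $C x_*(b^{k_s}) - d$, so $(w^{k_s}, \lambda^{k_s}, \mu^{k_s}) \in \mathcal{S}(x_*(b^{k_s}))$ with the uniform bound $\gamma := \kappa'\sup_s \|r^{k_s}\|$. The only point requiring care is that replacing the multipliers while keeping $w^{k_s}$ fixed preserves membership in $\mathcal{S}(\cdot)$, which is immediate from its definition.
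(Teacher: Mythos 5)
Your proof is correct, but it takes a genuinely different route from the paper's in both parts. For (i), you bound $\frac{\alpha}{2}\|x_*(b)\|_2^2$ by the objective value at a uniformly bounded feasible point obtained from Hoffman's error bound; the paper instead argues by contradiction, normalizing an unbounded sequence $x^k_*/\|x^k_*\|\to v_*\ne 0$, showing $Av_*=0$, $Cv_*\le 0$, and $(Cv_*)^T\mu^k=0$ for large $k$, and then left-multiplying the stationarity equation by $v_*^T$ to force $\alpha\|v_*\|_2^2=0$. For (ii), your combination of a pigeonhole on active sets with a second application of Hoffman's bound to the consistent system $A^T\lambda+(C_{\mathcal A\bullet})^T\nu=-(E^Tw^{k_s}+\alpha x_*(b^{k_s}))$, $\nu\ge 0$, is considerably more direct than the paper's construction, which fixes a basis of $R(A^T)$, eliminates $\lambda$ and $x_*$ explicitly, recasts the reduced complementarity conditions as a parametric polyhedron $\{z\mid Dz+v^{k_s}\ge 0\}$, takes its least $2$-norm point, and invokes the local Lipschitz property of $D^T\SOL(\cdot,DD^T)$ from Theorem~\ref{thm:LCP_singleton} at the limit $v^*$ — which is why the paper needs $(b^k)$ convergent, whereas your argument needs only boundedness. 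Your approach also makes the final verification trivial, since you keep $x_*(b^{k_s})$ and $w^{k_s}$ fixed and only swap the multipliers (and the replaced $\mu^{k_s}$, supported on the common active set, automatically satisfies complementarity), while the paper must separately check that its reconstructed point $\wh x^{k_s}$ coincides with $x^{k_s}_*$ by solution uniqueness. What the paper's heavier machinery buys is reuse: Theorem~\ref{thm:LCP_singleton} is the same tool it deploys again for the Lipschitz continuity result in Theorem~\ref{thm:continuity_x_*_convex_PA}. Both arguments rely on the nonemptiness of $\Scal(x_*(b^k))$, which is justified by the polyhedral constraint qualification underlying (\ref{eqn:optimality_cond}); you assume this implicitly, as does the paper.
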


\begin{proof}
 (i) Suppose $\{ x_*(b) \, | \, b \in A \mathcal C \, \cap \, \mathcal B \}$ is unbounded. Then there exists a sequence $(b^k)$ in $A \mathcal C \, \cap \, \mathcal B$ such that the sequence $\big( x_*(b^k) \big)$ satisfies $\| x_*(b^k) \| \rightarrow \infty$.
  For notational simplicity, we let $x^k_*:= x_*(b^k)$ for each $k$.
  Without loss of generality, we assume that $\big( \frac{x^k_*}{\|x^k_*\|} \big)$ converges to $v_* \ne 0$. In view of $A \frac{x^k_*}{\|x^k_*\|} = \frac{b^k}{\|x^k_*\|}$, $ C  \frac{x^k_*}{\|x^k_*\|} \le \frac{d}{\|x^k_*\|}$, and the fact that $(b^k)$ is bounded, we have $A v_* =0$ and $C v_* \le 0$. Further, for each $k$, there exist $\lambda^k \in \mathbb R^m$ and $\mu^k \in \mathbb R^\ell_+$ and $w^k \in \partial \| E x^k_* \|_\star$ such that $ E^T w^k + \alpha x^k_* + A^T \lambda^k + C^T \mu^k=0$, $A x^k_* = b^k$, and $0\le \mu^k \perp C x^k_* - d \le 0$ for each $k$.  We claim that $(C v_*)^T \mu^k = 0$ for all large $k$. To prove this claim, we note that, by virtue of $C v_* \le 0$, that for each index $i$, either $(C v_*)_i=0$ or $(C v_*)_i <0$. For the latter, it follows from $\big( C  \frac{x^k_*}{\|x^k_*\|} - \frac{d}{\|x^k_*\|}\big)_i \rightarrow (C v_*)_i$ that $( C x^k_* - d)_i <0$ for all large $k$. Hence, we deduce from the optimality condition (\ref{eqn:optimality_cond}) that $\mu^k_i=0$ for all large $k$. This shows that $(C v_*)_i \cdot \mu^k_i=0, \forall \, i$ for all large $k$. Hence, the claim holds.
  In view of this claim and $A v_*=0$, we see that
   left multiplying $v^T_*$  to the equation $ E^T w^k + \alpha x^k_* + A^T \lambda^k + C^T \mu^k =0$ leads to $(E v_*)^T w^k + \alpha (v_*)^T x^k_*=0$, or equivalently $(E v_*)^T \frac{ w^k}{\| x^*_k\|} + \alpha (v_*)^T  \frac{x^k_*}{\|x^k_*\|}=0$, for all large $k$. Since $(w^k)$ is bounded by Lemma~\ref{lem:uniform_bdd_subdf_norm}, we have, by taking the limit, that $\alpha \|v_* \|^2_2=0$, leading to $v_*=0$, a contradiction. Hence, $\{ x_*(b) \, | \, b \in A \mathcal C \, \cap \, \mathcal B\}$ is bounded.

%%\gap

 (ii) Given a convergent sequence $(b^k)$ in $A \mathcal C$, we use $x^k_*:= x_*(b^k)$ for each $k$ again. Consider a sequence $\big( (w^{k}, \lambda^{k}, \mu^{k}) \big)$,  where $ (w^{k}, \lambda^{k}, \mu^{k}) \in \Scal (x^k_*)$ is arbitrary for each $k$. In view of the boundedness of $(x^k_*)$ proven in (i) and Lemma~\ref{lem:uniform_bdd_subdf_norm}, we assume by taking a suitable subsequence that $(w^k, x^k_*) \rightarrow (\wh w, \wh x)$. Let the index set $\wh \Ical_\mu:=\{ i  \, | \, (C^T \wh x -d)_i <0  \}$. If there exists an index $i \not\in \wh \Ical_\mu$ such that $(\mu^k_i)$ has a zero subsequence $(\mu^{k'}_i)$, then let $\wh\Ical'_\mu  := \wh\Ical_\mu \cup \{ i \}$. We then consider the subsequence $(\mu^{k'})$. If there exists an index $j \notin \wh\Ical'_\mu$ such that $(\mu^{k'}_j)$ has a zero subsequence $(\mu^{k''}_j)$, then  let $\wh \Ical''_\mu  := \wh\Ical'_\mu \cup \{ j \}$ and consider the subsequence $(\mu^{k''})$. Continuing this process in finite steps, we obtain  an index subsequence $(k_s)$  and an index set $\Ical_\mu$ such that $(C^T x^{k_s}_* -d)_{\Ical_\mu} <0$ and $\mu^{k_s}_{\Ical^c_\mu}>0$ for all $k_s$'s, where $\Ical^c_\mu := \{1, \ldots, N\} \setminus \Ical_\mu$. By the complementarity condition in (\ref{eqn:optimality_cond}), we have $\mu^{k_s}_{\Ical_\mu}=0$ and $\mu^{k_s}_{\Ical^c_\mu}>0$ for each $k_s$.

Since $A \ne 0$, there exits  an index subset $\Jcal \subseteq \{1, \ldots, m\}$ such that the columns of $(A^T)_{\bullet \Jcal}$ (or equivalently $(A_{\Jcal \bullet})^T$) form a basis for  $R(A^T)$. Hence, for each $\lambda^{k_s}$, there exists a unique vector $\wt \lambda^{k_s}$ such that $A^T \lambda^{k_s} = (A_{\Jcal \bullet})^T \wt \lambda^{k_s}$. In view of the equations $E^T w^{k_s} + \alpha x^{k_s}_* + (A_{\Jcal \bullet})^T \wt \lambda^{k_s} + C^T \mu^{k_s}=0$ and $A_{\Jcal \bullet} x^{k_s}_* = b^{k_s}_\Jcal$, we obtain via  a straightforward computation that
  \begin{align}
    \wt \lambda^{k_s} & \, = \,  -\big( A_{\Jcal \bullet} (A_{\Jcal \bullet})^T \big)^{-1}  \Big[ \, \alpha b^{k_s}_\Jcal +A_{\Jcal \bullet} (E^T w^{k_s} + C^T \mu^{k_s}) \, \Big], \notag \\
    x^{k_s}_* & \, = \, (A_{\Jcal \bullet})^T \big( A_{\Jcal \bullet} (A_{\Jcal \bullet})^T \big)^{-1} b^{k_s}_\Jcal + \frac{1}{\alpha} \Big[  (A_{\Jcal \bullet})^T \big( A_{\Jcal \bullet} (A_{\Jcal \bullet})^T \big)^{-1} A_{\Jcal \bullet} - I \Big] \big(E^T w^{k_s} + C^T \mu^{k_s} \big), \label{eqn:x_k_s}
  \end{align}
  where  $C^T \mu^{k_s} = (C_{\Ical^c_\mu \bullet})^T \mu^{k_s}_{\Ical^c}$  for each $k_s$ in view of $\mu^{k_s}_{\Ical^c_\mu}>0$ and $\mu^{k_s}_{\Ical_\mu}=0$.
  Substituting $x^{k_s}_*$ into the complementarity condition $  0 \le \mu^{k_s} \perp d - C x^{k_s}_*  \ge 0$, we deduce that $\mu^{k_s}_{\Ical^c_\mu}$ satisfies the following conditions:
  \[
     0 \le \mu^{k_s}_{ \Ical^c_\mu} \perp  d_{\Ical^c_\mu} - C_{\Ical^c_\mu \bullet} x^{k_s}_*    \ge 0, \quad C_{\Ical_\mu \bullet} x^{k_s}_* - d_{\Ical_\mu} = H \, \mu^{k_s}_{\Ical^c_\mu} + h^{k_s} \le 0,
  \]
  %
  %obtain  the following linear complementarity condition for $\mu^{k_s}_{\Ical^c_\mu}$: $0 \le \mu^{k_s}_{ %\Ical^c_\mu} \perp H \mu^{k_s}_{\Ical^c_\mu} + g^{k_s} \ge 0$,
 %
   where $d_{\Ical^c_\mu} - C_{\Ical^c_\mu \bullet} x^{k_s}_*  = G \, \mu^{k_s}_{\Ical^c_\mu} + g^{k_s}$, and the matrices $G, H$ and the vectors $g^{k_s}, h^{k_s}$ are given by
  \begin{eqnarray*}
    G & := & \frac{1}{\alpha} C_{\Ical^c_\mu \bullet}\Big[ I- (A_{\Jcal \bullet})^T \big( A_{\Jcal \bullet} (A_{\Jcal \bullet})^T \big)^{-1} A_{\Jcal \bullet}  \Big] (C_{\Ical^c_\mu \bullet})^T, \\
   g^{k_s} & := & \frac{1}{\alpha} C_{\Ical^c_\mu \bullet}\Big[ I - (A_{\Jcal \bullet})^T \big( A_{\Jcal \bullet} (A_{\Jcal \bullet})^T \big)^{-1} A_{\Jcal \bullet}  \Big] E^T w^{k_s} - C_{\Ical^c_\mu \bullet} (A_{\Jcal \bullet})^T \big( A_{\Jcal \bullet} (A_{\Jcal \bullet})^T \big)^{-1} b^{k_s}_\Jcal + d_{\Ical^c_\mu}, \\
    H & := & \frac{1}{\alpha} C_{\Ical_\mu \bullet}\Big[ I- (A_{\Jcal \bullet})^T \big( A_{\Jcal \bullet} (A_{\Jcal \bullet})^T \big)^{-1} A_{\Jcal \bullet}  \Big] (C_{\Ical^c_\mu \bullet})^T, \\
   h^{k_s} & := & \frac{1}{\alpha} C_{\Ical_\mu \bullet}\Big[ I - (A_{\Jcal \bullet})^T \big( A_{\Jcal \bullet} (A_{\Jcal \bullet})^T \big)^{-1} A_{\Jcal \bullet}  \Big] E^T w^{k_s} - C_{\Ical_\mu \bullet} (A_{\Jcal \bullet})^T \big( A_{\Jcal \bullet} (A_{\Jcal \bullet})^T \big)^{-1} b^{k_s}_\Jcal + d_{ \Ical_\mu}.
  \end{eqnarray*}
%
%  and $C_{\wh I^c_\mu } x^{k_s}_* - d_{\wh \Ical^c_\mu} =L \mu^{k_s}_{\wh \Ical_\mu} + h^{k_s} \le 0$, where
%  \begin{eqnarray*}
%    L & := & \frac{1}{\alpha} C_{\wh \Ical^c_\mu \Jcal}\Big[ I- (A_{\Jcal \bullet})^T \big( A_{\Jcal \bullet} %(A_{\Jcal \bullet})^T \big)^{-1} A_{\Jcal \bullet}  \Big] (C_{\wh \Ical_\mu \Jcal})^T, \\
%   h^{k_s} & := & \frac{1}{\alpha} C_{\wh \Ical^c_\mu \Jcal}\Big[ I - (A_{\Jcal \bullet})^T \big( A_{\Jcal %\bullet} (A_{\Jcal \bullet})^T \big)^{-1} A_{\Jcal \bullet}  \Big] E^T w^{k_s}_\Jcal - C_{\wh \Ical^c_\mu %\Jcal} (A_{\Jcal \bullet})^T \big( A_{\Jcal \bullet} (A_{\Jcal \bullet})^T \big)^{-1} b^{k_s}_\Jcal + d_{\wh %\Ical^c_\mu}.
%  \end{eqnarray*}
%
%
Since $\mu^{k_s}_{\Ical^c_\mu}>0$, we must have $G \mu^{k_s}_{\Ical^c_\mu} + g^{k_s}=0$.
%
%If we need to emphasize the dependence of $G, H, g^{k_s}, h^{h_s}$ on $\Ical_\mu$ and/or its complement %$\Ical^c_\mu$, we may write $G$ as $G(\Ical^c_\mu)$, $H$ as $H(\Ical_\mu, \Ical^c_\mu)$,  $g^{k_s}$ as %$g^{k_s}(\Ical_\mu)$, and $h^{h_s}$ as $h^{k_s}(\Ical_\mu, \Ical^c_\mu)$ (noting that the index set $\Jcal$ %is fixed).
%
For the  matrices  $G, H$ and given vectors $g, h$, define the polyhedral set $\Kcal(G, H, g, h):=\{ z \, | \, z \ge 0, \ G z + g =0,  \ H z + h \le 0 \} = \{ z \, | \,  D z + v \ge 0 \}$, where $D := \begin{bmatrix} I \\ G \\ -G \\ - H \end{bmatrix}$ and $v:=\begin{bmatrix} 0 \\ g \\ -g \\ -h \end{bmatrix}$.  Hence, for each $k_s$, $\Kcal(G, H, g^{k_s}, h^{k_s})$ contains the vector $\mu^{k_s}_{\Ical^c_\mu}>0$ and thus is nonempty.
We write $v$ as $v^{k_s}$ when $(g, h)=(g^{k_s}, h^{k_s})$.
Let $\wt z^{k_s}$ be the least 2-norm point of $\Kcal(G, H, g^{k_s}, h^{k_s})$, i.e., $\wt z^{k_s}$ is the unique solution to $\min \frac{1}{2} \|z\|^2_2$ subject to $D z + v^{k_s} \ge 0$. Since its underlying optimization problem has a (feasible) polyhedral constraint, its necessary and sufficient optimality condition is:  $\wt z^{k_s} - D^T \nu =0, \ 0\le \nu \perp D \wt z^{k_s} + v^{k_s} \ge 0$ for some (possibly non-unique) multiplier $\nu$. Let $\SOL(v^{k_s}, D D^T)$ be the solution set of the LCP: $0\le \nu \perp  v^{k_s} + D D^T \nu \ge 0$. By the uniqueness of $\wt z^{k_s}$, $\wt z^{k_s}=D^T \SOL(v^{k_s}, D D^T)$ such that $D^T \SOL(v^{k_s}, D D^T)$ is singleton.  %%%Clearly, $D^T \SOL(v^{k_s}, D D^T)$ is singleton for each $k_s$.

Since  $g^{k_s}$ and $h^{k_s}$ are affine functions of $(w^{k_s}, b^{k_s})$ and the sequences $(w^{k_s})$ and  $(b^{k_s})$ are convergent, $( v^{k_s} )$ is convergent and we let $v^*$ be its limit. We show as follows that the polyhedral set $\{ z \, | \, D z + v^* \ge 0 \}$ is nonempty. Suppose not. Then it follows from a version of Farkas' lemma \cite[Theorem 2.7.8]{CPStone_book92} that there exists $w \ge 0$ such that $D^T w =0$ and $w^T v^* <0$. Since $(v^{k_s}) \rightarrow v^*$, we see that $w^T v^{k_s}<0$ for all large $k_s$. By \cite[Theorem 2.7.8]{CPStone_book92} again, we deduce that $D z + v^{k_s} \ge 0$ has no solution $z$ for all large $k_s$, yielding a contradiction. This shows that $\{ z \, | \, D z + v^* \ge 0 \}$ is nonempty. Thus $\SOL(v^*, D^T D)$ is nonempty and $D^T \SOL(v^*, D D^T)$ is singleton. Define the function $R(v):=D^T\SOL(v, D D^T)$. By Theorem~\ref{thm:LCP_singleton}, $R(\cdot)$ is locally Lipschitz continuous at $v^*$, i.e., there exist a constant $L_*>0$ and a neighborhood $\mathcal V$ of $v^*$ such that for any $v\in \mathcal V$ satisfying that $\{ z \, | \, D z + v \ge 0\}$ is nonempty, $\|R(v) - R(v^*)\| \le L_* \| v - v^*\|$. This, along with the convergence of $(v^{k_s})$ to $v^*$, show that  $\{ \wt z^{k_s} \, | \, \wt z^{k_s}=R(v^{k_s}), \forall \, k_s \}$ is bounded.
%
%
%Define the compact set $\{ v^{k_s}, \forall \, k_s \} \cup\{ v^*\}$
%
%Let $\Wcal$ be the convex hull of the compact set $\{ v^{k_s}, \forall \, k_s \} \cup\{ v^*\}$, where %$v^{k_s} = (0, g^{k_s}, -g^{k_s}, h^{k_s})$. Hence,  $\Wcal$ is also bounded.
%
%
%Moreover, since  $\{ z \, | \, D z + v^{k_s} \ge 0 \}$ is nonempty for each $k_s$, so is $\{ z \, | \, D z + %v \ge 0 \}$ for any $v \in \Wcal$. This shows that for any $v\in \Wcal$, $\min \frac{1}{2} \|z\|^2_2$ subject %to $D z + v \ge 0$ has a feasible constraint set such that $D^T\SOL(v, D D^T)$ is nonempty for any $v \in %\mathcal W$. Further, by the uniqueness of the least 2-norm solution, $D^T\SOL(v, D D^T)$ is singleton for %any $v \in \mathcal W$. It follows from Theorem~\ref{thm:LCP_singleton} that the function $R(v):=D^T\SOL(v, D %D^T)$ is Lipschitz continuous  on $\mathcal W$. Therefore, $R(\Wcal)$ is a bounded set.
%
%
For each $k_s$, let $\wh \mu^{k_s} := ( \wh \mu^{k_s}_{\Ical_\mu}, \wh \mu^{k_s}_{\Ical^c_\mu})=(0, \wt z^{k_s})$.
Hence, $(\wh \mu^{k_s})$ is a bounded sequence.
Further, let $\wt \lambda^{k_s}  : =  -\big( A_{\Jcal \bullet} (A_{\Jcal \bullet})^T \big)^{-1}  \big[\alpha b^{k_s}_\Jcal +A_{\Jcal \bullet} (E^T w^{k_s} + C^T \wh \mu^{k_s})  \big]$, and $\wh \lambda^{k_s}:= \big( \wh \lambda^{k_s}_{\Jcal}, \wh \lambda^{k_s}_{\Jcal^c} \big)=( \wt \lambda^{k_s}, 0)$.
This implies that $( \wt \lambda^{k_s} )$, and thus $( \wh \lambda^{k_s})$, is bounded. Hence, $\big( (\wh \lambda^{k_s}, \wh\mu^{k_s})  \big)$ is a  bounded sequence, i.e., there exists $\gamma >0$ such that $\|(\wh \lambda^{k_s}, \wh \mu^{k_s})\| \le \gamma$ for all $k_s$.

Lastly, we show that $(w^{k_s}, \wh\lambda^{k_s}, \wh \mu^{k_s}) \in \Scal(x^{k_s}_*)$ for each $k_s$. In view of (\ref{eqn:x_k_s}), define
\[
  \wh x^{k_s} \, := \, (A_{\Jcal \bullet})^T \big( A_{\Jcal \bullet} (A_{\Jcal \bullet})^T \big)^{-1} b^{k_s}_\Jcal + \frac{1}{\alpha} \Big[  (A_{\Jcal \bullet})^T \big( A_{\Jcal \bullet} (A_{\Jcal \bullet})^T \big)^{-1} A_{\Jcal \bullet} - I \Big] \big(E^T w^{k_s} + C^T \wh \mu^{k_s} \big).
\]
Therefore, $A_{\Jcal\bullet} \wh x^{k_s} = b^{k_s}_\Jcal$ for each $k_s$. Since the columns of $(A_{\Jcal \bullet})^T$ form a basis for  $R(A^T)$ and $b^{k_s} \in A \Pcal$, we have $A\wh x^{k_s} = b^{k_s}$. Moreover, based on the constructions of $\wh \lambda^{k_s}$ and $\wh \mu^{k_s}$, it is easy to show that $E^T w^{k_s} + \alpha \wh x^{k_s} + A^T \wh \lambda^{k_s} + C^T \wh \mu^{k_s}=0$,  $ (C \wh x^{k_s} - d)_{\Ical_\mu} = H \wt z^{k_s} + h^{k_s} \le 0$, and $ (C \wh x^{k_s} - d)_{\Ical^c_\mu} = G \wt z^{k_s} + g^{k_s} = 0$ for each $k_s$. In light of $\wh \mu^{k_s} = ( \wh \mu^{k_s}_{\Ical_\mu}, \wh \mu^{k_s}_{\Ical^c_\mu})=(0, \wt z^{k_s}) \ge 0$, we have $0\le \wh \mu^{k_s} \perp C \wh x^{k_s} - d \le 0$ for each $k_s$. This implies that  $(w^{k_s}, \wh\lambda^{k_s}, \wh \mu^{k_s}) \in \Scal(\wh x^{k_s})$ for each $k_s$. Since the optimization problem (\ref{eqn:BP_gen_polyhedral}) has a unique solution for each $b^{k_s}$, we must have $\wh x^{k_s} = x^{k_s}_*$. This shows that $(w^{k_s}, \wh\lambda^{k_s}, \wh \mu^{k_s}) \in \Scal(x^{k_s}_*)$ for each $k_s$.
\end{proof}

With the help of Proposition~\ref{prop:boundness_seq}, we are ready to show the desired continuity.

\begin{theorem} \label{thm:continuity_x_*}
  Let $\alpha>0$,  $E \in \mathbb R^{r \times N}$ $A \in \mathbb R^{m\times N}$, $\mathcal C:=\{ x \in \mathbb R^N \, | \, Cx \le d \}$ for some $C \in \mathbb R^{\ell\times N}$ and $d \in \mathbb R^\ell$, and $b\in \mathbb R^m$ with $b \in A \mathcal C:=\{ A x \, | \, x \in \mathcal C\}$.  Then the unique solution $x_*$ of the minimization problem (\ref{eqn:BP_gen_polyhedral}) is continuous in $b$ on $A \mathcal C$.
\end{theorem}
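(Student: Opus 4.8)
The plan is to prove continuity by a subsequence-and-uniqueness argument, leveraging the two boundedness statements of Proposition~\ref{prop:boundness_seq} together with the outer semicontinuity of the subdifferential of the norm. First observe that the objective of (\ref{eqn:BP_gen_polyhedral}) is strictly convex in $x$ owing to the term $\frac{\alpha}{2}\|x\|^2_2$, so the minimizer $x_*(b)$ is genuinely unique for each $b \in A\Ccal$; this uniqueness is exactly what powers the subsequence argument. Fix $b \in A\Ccal$ and an arbitrary sequence $(b^k)$ in $A\Ccal$ with $b^k \to b$; it suffices to show $x_*(b^k) \to x_*(b)$. Since $(b^k)$ is convergent, it lies in some bounded set $\mathcal B$, so by Proposition~\ref{prop:boundness_seq}(i) the sequence $(x_*(b^k))$ is bounded. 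Because a bounded sequence in $\mathbb R^N$ converges to a point $L$ if and only if every convergent subsequence has limit $L$, it is enough to prove that every convergent subsequence of $(x_*(b^k))$ converges to $x_*(b)$.

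Accordingly, I would take any convergent subsequence $x_*(b^{k_j}) \to \hat x$ and show $\hat x = x_*(b)$ by passing to the limit in the optimality conditions (\ref{eqn:optimality_cond}). Feasibility is immediate: $\hat x \in \Ccal$ and $A\hat x = b$ follow from $x_*(b^{k_j}) \in \Ccal$, $A x_*(b^{k_j}) = b^{k_j}$, and the closedness of $\Ccal$. The delicate part is controlling the subgradient and the multipliers. Along $(k_j)$ I would invoke Proposition~\ref{prop:boundness_seq}(ii) to extract a further subsequence $(k_s)$ and triples $(w^{k_s}, \lambda^{k_s}, \mu^{k_s}) \in \Scal(x_*(b^{k_s}))$ with $\|(\lambda^{k_s}, \mu^{k_s})\| \le \gamma$; the subgradients $w^{k_s}$ are automatically bounded by Lemma~\ref{lem:uniform_bdd_subdf_norm}, since $\|w^{k_s}\|_\diamond \le 1$. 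Passing to a final subsequence, I may assume $(w^{k_s}, \lambda^{k_s}, \mu^{k_s}) \to (\hat w, \hat\lambda, \hat\mu)$ with $\hat\mu \ge 0$, while still $x_*(b^{k_s}) \to \hat x$.

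Taking limits termwise then finishes the identification. The stationarity equation $E^T w^{k_s} + \alpha\, x_*(b^{k_s}) + A^T \lambda^{k_s} + C^T \mu^{k_s} = 0$ passes to $E^T \hat w + \alpha \hat x + A^T \hat\lambda + C^T \hat\mu = 0$ by continuity, and the complementarity relation $0 \le \mu^{k_s} \perp C x_*(b^{k_s}) - d \le 0$ passes to $0 \le \hat\mu \perp C\hat x - d \le 0$, since nonnegativity, the sign of $C\hat x - d$, and orthogonality are all preserved under limits. The single nontrivial limit is $\hat w \in \partial \|E\hat x\|_\star$: here $w^{k_s} \in \partial \|E x_*(b^{k_s})\|_\star$, $E x_*(b^{k_s}) \to E\hat x$, and $w^{k_s} \to \hat w$, so $\hat w$ lies in the subdifferential of the finite-valued convex norm $\|\cdot\|_\star$ at $E\hat x$ because the subdifferential map of a finite convex function has closed graph (is outer semicontinuous). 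Combining these, $(\hat w, \hat\lambda, \hat\mu)$ certifies that $\hat x$ solves (\ref{eqn:optimality_cond}) with right-hand side $b$, whence $\hat x$ is a minimizer of (\ref{eqn:BP_gen_polyhedral}) at $b$, and by uniqueness $\hat x = x_*(b)$.

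The main obstacle is precisely the possible unboundedness of the multipliers $(\lambda, \mu)$, which is why the limit must be taken along the carefully selected subsequence supplied by Proposition~\ref{prop:boundness_seq}(ii) rather than along arbitrary multiplier choices; once this boundedness is in hand, the only remaining subtlety is the closed-graph property of $\partial\|\cdot\|_\star$, which is automatic for a finite-valued convex function. Having shown that every convergent subsequence of the bounded sequence $(x_*(b^k))$ has the common limit $x_*(b)$, I conclude $x_*(b^k) \to x_*(b)$; as $b \in A\Ccal$ and the sequence were arbitrary, $x_*(\cdot)$ is continuous on $A\Ccal$.
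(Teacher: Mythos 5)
Your proposal is correct and follows essentially the same route as the paper: bound the minimizers via Proposition~\ref{prop:boundness_seq}(i), extract a subsequence with bounded multipliers via Proposition~\ref{prop:boundness_seq}(ii), pass to the limit in the optimality conditions (\ref{eqn:optimality_cond}) using the closed graph of the subdifferential (the paper cites Bertsekas, Proposition B.24(c), for this), and conclude by uniqueness of the minimizer. The only cosmetic difference is that the paper phrases the argument as a proof by contradiction starting from an assumed discontinuity, while you use the direct ``every convergent subsequence has the same limit'' formulation.
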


\begin{proof}
  Fix an arbitrary $b\in A\mathcal C$. Suppose $x_*(\cdot)$ is discontinuous at this $b$. Then there exist $\varepsilon_0>0$ and a sequence $(b^k)$ in $A \mathcal C$ such that $(b^k)$ converges to $b$ but $\| x^k_* - x_*(b) \| \ge \varepsilon_0$ for all $k$, where $x^k_*:= x_*(b^k)$. By Statement (i) of Proposition~\ref{prop:boundness_seq}, $(x^k_*)$ is  bounded and hence attains a convergent subsequence  which, without loss of generality, can be itself. Let the limit of $(x^k_*)$ be $\wh x$. Further,
 as shown in Statement (ii) of Proposition~\ref{prop:boundness_seq}, there exists a bounded subsequence $\big( (w^{k_s}, \lambda^{k_s}, \mu^{k_s}) \big) $ such that $(w^{k_s}, \lambda^{k_s}, \mu^{k_s}) \in \Scal(x^{k_s}_*)$ for each $k_s$. Without loss of generality, we assume that $\big( (w^{k_s}, \lambda^{k_s}, \mu^{k_s}) \big)$ converges to $(\wh w, \wh \lambda, \wh \mu)$. Since $(E x^{k_s}_*) \rightarrow E \wh x$ and $(w^{k_s})\rightarrow\wh w$ with $w^{k_s} \in \partial \|E x^{k_s}_*\|_\star$ for each $k_s$, it follows from \cite[Proposition B.24(c)]{Bertsekas_book99} that $\wh w \in \partial\| E \wh x \|_\star$.
 By taking the limit, we deduce that $(\wh x, \wh w, \wh \lambda, \wh \mu)$ satisfies $E^T \wh w + \alpha \wh x + A^T \wh \lambda+C^T \wh \mu=0$, $A \wh x = b$, and $0 \le \wh \mu \perp C \wh x - d \le 0$, i.e., $(\wh w, \wh \lambda, \wh \mu) \in \Scal(\wh x)$.
   This shows that $\wh x$ is a solution to (\ref{eqn:BP_gen_polyhedral}) for the given $b$. Since this solution is unique, we must have $\wh x = x_*(b)$. Hence, $(x^{k_s}_*)$ converges to $x_*(b)$, a contradiction to $\| x^{k_s}_* - x_*(b) \| \ge \varepsilon_0$ for all $k_s$. This yields the continuity of $x_*$ in $b$ on $A \mathcal C$.
\end{proof}

%%%\newpage

%%%%%%%%%%%%%%%%%%%%%%%%%%%%%%%%%%%%%%%%%%%%%%%%%%%%%%%%%%%%%%%%%%%%%%%%%%%%%%%%%%%%%%%%%%%%%%%%%%%%%
%
%
%%%%%%%%%%%%%%%%%%%%%%%%%%%%%%%%%%%%%%%%%%%%%%%%%%%%%%%%%%%%%%%%%%%%%%%%%%%%%%%%%%%%%%%%%%%%%%%%%%%%%

%%%%%%%%%%%%%%%%%%%%%%%%%%%%%%%%%%%%%%%%%%%%%%%%%%%%%%%%%%%%%%%%%%%%%%%%%%%%%%%%%%%%%%%%%%%%%%%%%%%%%
%
%
%%%%%%%%%%%%%%%%%%%%%%%%%%%%%%%%%%%%%%%%%%%%%%%%%%%%%%%%%%%%%%%%%%%%%%%%%%%%%%%%%%%%%%%%%%%%%%%%%%%%%

When the norm $\|\cdot \|_\star$ in the objective function of the optimization problem (\ref{eqn:BP_gen_polyhedral}) is given by the $\ell_1$-norm or a  convex PA function in general,  the continuity property shown in Theorem~\ref{thm:continuity_x_*} can be enhanced. Particularly, the following result establishes the Lipschitz continuity of $x_*$ in $b$, which is useful in deriving the overall convergence rate of the two-stage distributed algorithm.

\begin{theorem} \label{thm:continuity_x_*_convex_PA}
 Let $f:\mathbb R^n \rightarrow \mathbb R$ be a convex piecewise affine function, $A \in \mathbb R^{m\times N}$, $\mathcal C:=\{ x \in \mathbb R^N \, | \, Cx \le d \}$ for some $C \in \mathbb R^{\ell\times N}$ and $d \in \mathbb R^\ell$, and $b\in \mathbb R^m$ with $b \in A \mathcal C:=\{ A x \, | \, x \in \mathcal C\}$.
 Then for any $\alpha>0$,  $\min_{x \in \mathcal C} f(x) + \frac{\alpha}{2} \|x \|^2_2$ subject to $A x = b$ has a unique minimizer $x_*$. Further, $x_*$ is  Lipschitz continuous in $b$ on $A \mathcal C$, i.e., there exists a constant $L>0$ such that $\|x_*(b') - x_*(b)\|\le L\|b'-b\|$ for any $b, b' \in A \mathcal C$.
\end{theorem}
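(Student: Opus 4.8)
The plan is to reduce the problem to a parametric linear complementarity problem and then invoke Theorem~\ref{thm:LCP_singleton}. First I would settle existence and uniqueness: the objective $f(x)+\frac{\alpha}{2}\|x\|_2^2$ is continuous, strictly convex in $x$ (because of the quadratic term) and coercive, while the feasible set $\{x : Ax=b,\ x\in\mathcal C\}$ is nonempty (since $b\in A\mathcal C$) and closed; hence a unique minimizer $x_*(b)$ exists. Next, using the max-formulation $f(x)=\max_{i=1,\dots,\ell}(p_i^T x+\gamma_i)$ available for convex PA functions, I would pass to the epigraphic quadratic program $\min_{(x,t)} t+\frac{\alpha}{2}\|x\|_2^2$ subject to $Ax=b$, $Cx\le d$, and $p_i^T x+\gamma_i\le t$ for all $i$. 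This QP has the unique solution $(x_*(b),f(x_*(b)))$, and its data depend on $b$ only through the right-hand side of the equality constraint.

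Writing $P$ for the matrix with rows $p_i^T$, the KKT system introduces a free multiplier $\lambda$ for $Ax=b$, nonnegative multipliers $\mu$ for $Cx\le d$ and $\eta$ for the epigraph constraints, with stationarity giving $x=-\frac{1}{\alpha}(A^T\lambda+C^T\mu+P^T\eta)$ and $\mathbf 1^T\eta=1$. The heart of the argument is to encode this KKT system, together with the equalities $Ax=b$ and $\mathbf 1^T\eta=1$ and the complementarity conditions, as a single \emph{standard} LCP $0\le z\perp q(b)+Mz\ge 0$ with fixed matrix $M$ and with $q(b)$ depending affinely on $b$. Following the device used in the proof of Proposition~\ref{prop:boundness_seq}, I would split the free variables $\lambda$ and $t$ into their nonnegative and nonpositive parts and replace each equality $h=0$ attached to a free variable by the paired complementarity $0\le\xi^+\perp h\ge 0$, $0\le\xi^-\perp -h\ge 0$, which forces $h=0$. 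After substituting the stationarity expression for $x$, this yields $z=(\mu,\eta,\lambda^+,\lambda^-,t^+,t^-)$ and an LCP whose matrix $M$ is built from $\frac{1}{\alpha}$ times the Gram-type blocks $AA^T, AC^T, AP^T, CC^T,\dots$, while $q(b)$ collects the affine-in-$b$ data.

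With this encoding in hand, let $E$ be the linear selector reconstructing $x=-\frac{1}{\alpha}(A^T(\lambda^+-\lambda^-)+C^T\mu+P^T\eta)$ from $z$. Existence of KKT multipliers for the polyhedrally constrained QP (no constraint qualification is needed for linear constraints) guarantees that $\SOL(q(b),M)$ is nonempty for every $b\in A\mathcal C$, and since every $z\in\SOL(q(b),M)$ reproduces a genuine KKT point whose primal part must coincide with the unique minimizer, $E\,\SOL(q(b),M)=\{x_*(b)\}$ is a singleton. Because $b\mapsto q(b)$ is affine and $A\mathcal C$ is convex, the parameter set $\Wcal:=\{q(b):b\in A\mathcal C\}$ is convex, so Theorem~\ref{thm:LCP_singleton}(ii) applies and gives a constant $L_0$ with $\|E\,\SOL(q(b'),M)-E\,\SOL(q(b),M)\|\le L_0\|q(b')-q(b)\|$. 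Composing with the affine map $b\mapsto q(b)$, whose linear part has finite norm, delivers the desired bound $\|x_*(b')-x_*(b)\|\le L\|b'-b\|$ on $A\mathcal C$.

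I expect the main obstacle to be the faithful conversion of the mixed KKT system into a standard LCP of the form required by Theorem~\ref{thm:LCP_singleton}: one must verify that the free-variable splitting and the equality-to-complementarity encoding neither lose nor create solutions at the level of the primal $x$-component, i.e.\ that $E\,\SOL(q(b),M)$ remains exactly $\{x_*(b)\}$ and nonempty for \emph{every} parameter in the convex set $\Wcal$. Once that singleton-and-nonemptiness property is checked uniformly in $b$, the combinatorics of the changing active set are absorbed entirely by Theorem~\ref{thm:LCP_singleton}(ii), and the global Lipschitz conclusion is immediate; this is precisely why building a single LCP with fixed $M$, rather than the index-set-dependent reduction used in Proposition~\ref{prop:boundness_seq}, is the crux.
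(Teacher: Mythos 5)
Your proposal is correct and follows essentially the same route as the paper: both reduce the epigraphic QP's KKT system to a standard LCP $(q(b),M)$ with fixed $M$ and $q$ affine in $b$ (splitting the free multiplier $\lambda$ and the epigraph variable $t$ into nonnegative parts and encoding the remaining equality via a pair of complementarities), observe that the primal selector $E\,\SOL(q(b),M)$ is a nonempty singleton equal to $\{x_*(b)\}$ for every $b\in A\mathcal C$, and then invoke Theorem~\ref{thm:LCP_singleton}(ii) on the convex parameter set before composing with the affine map $b\mapsto q(b)$. The only point worth making explicit in a full write-up is the coercivity of $f(x)+\frac{\alpha}{2}\|x\|_2^2$ (the paper derives it from the max-formulation), but your argument is otherwise the paper's argument.
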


\begin{proof}
We first show the solution existence and uniqueness. Consider a real-valued convex PA function $f(x) \, = \, \max_{i=1, \ldots, r} \, \big( \, p^T_i x + \gamma_i \, \big)$ for a finite family of $(p_i, \gamma_i) \in \mathbb R^N \times \mathbb R, i=1, \ldots, r$.
%
%such that
%$f(x) \, = \, \max_{i=1, \ldots, \ell} \, \big( \, p^T_i x + \gamma_i \, \big)$.
%
Note that for any given $\alpha>0$ and any nonzero $x$,
\[
 f(x) + \frac{\alpha}{2} \|x \|^2_2 \, = \, \|x\|^2_2 \cdot \left[ \, \frac{\alpha}{2} + \max_{i=1, \ldots, r} \Big( p^T_i \frac{x}{\|x\|^2_2} + \frac{\gamma_i}{\|x\|^2_2} \Big) \, \right].
\]
Hence, $f(x) + \frac{\alpha}{2} \|x \|^2_2$ is coercive. Since it is continuous and strictly convex and the constraint set is closed and convex, the underlying optimization problem attains a unique minimizer.

To prove the Lipschitz property of the unique minimizer $x_*$ in $b$, we consider the following equivalent form of the underlying optimization problem:
\begin{equation}
    \min_{t_+, t_-, x} \, t_+ - t_- + \frac{\alpha}{2} \|x \|^2_2 \ \ \, \mbox{subject to} \ \ t_+ \ge 0, \ t_- \ge 0, \, A x = b, \, C x \le d, \ p^T_i x + \gamma_i \le t_+ - t_-, \  i=1, \ldots, r. \label{eqn:opt_problem_Lipschtiz}
\end{equation}
Define the matrix $W:=\begin{bmatrix} p^T_1 \\ \vdots \\ p^T_r \end{bmatrix} \in \mathbb R^{r \times N}$ and the vector $\Gamma:=\begin{bmatrix} \gamma_1 \\ \vdots \\ \gamma_r \end{bmatrix} \in \mathbb R^{r}$. Then the constraints can be written as $t_+ \ge 0, \ t_- \ge 0, \ A x = b, \ C x \le d$, and $W x + \Gamma - t_+ \mathbf 1 + t_-\mathbf 1 \le 0$, where $\mathbf 1$ denotes the vector of ones. Given $b \in A \mathcal C$, the necessary and sufficient optimality conditions for the minimizer $x_*$ are described by a mixed linear complementarity problem, i.e., there exist Lagrange multipliers $\lambda \in \mathbb R^m, \mu \in \mathbb R^\ell_+, \nu \in \mathbb R^r_+, \theta_+ \in \mathbb R_+$ and $ \theta_- \in \mathbb R_+$ such that
\begin{align*}
  \alpha x_* + A^T \lambda + C^T \mu + W^T \nu = 0,   \quad A x_* = b, & \quad   1- \mathbf 1^T \nu -\theta_+=0, \quad  -1 + \mathbf 1^T \nu - \theta_-=0,    \\
   0 \le \mu \perp C x_* - d \le 0, \ 0 \le \nu \perp W x_* + \Gamma - t_+ \mathbf 1 + t_-\mathbf 1 \le 0, &  \quad 0\le \theta_+ \perp t_+ \ge 0, \quad 0\le \theta_- \perp t_- \ge 0.
\end{align*}
Note that  the first and second equations are equivalent to the first equation and $\alpha b + A A^T \lambda + A C^T \mu + A W^T \nu =0$. Further, it is noticed that $\theta_+=\theta_-=0$, and $\lambda = \lambda_+ - \lambda_-$ with $0\le \lambda_+ \perp \lambda_- \ge 0$. Hence, by adding two slack variables $\vartheta$ and $\varphi$, the above mixed linear complementarity problem is equivalent to
\begin{eqnarray*}
  x_* & = & -\frac{1}{\alpha} \Big( A^T \lambda_+ - A^T \lambda_- + C^T \mu + W^T \nu \Big), \\
 0 \le \mu & \perp & -\frac{C}{\alpha} \Big( A^T \lambda_+ - A^T \lambda_- + C^T \mu + W^T \nu \Big)  - d \le 0, \\
 0 \le \nu & \perp & -\frac{W}{\alpha} \Big( A^T \lambda_+ - A^T \lambda_- + C^T \mu + W^T \nu \Big) + \Gamma - t_+ \mathbf 1 + t_-\mathbf 1 \le 0, \\
  0\le t_+ & \perp & 1 - \mathbf 1^T \nu \ge 0, \\
  0\le t_- & \perp & -1 + \mathbf 1^T \nu \ge 0, \\
  0\le \lambda_+ & \perp & \lambda_- \ge 0, \\
 0 \le \vartheta & \perp &  \alpha b + A A^T (\lambda_+ - \lambda_-) + A C^T \mu + A W^T \nu \ge 0, \\
 0 \le \varphi & \perp &  \alpha b + A A^T (\lambda_+ - \lambda_-) + A C^T \mu + A W^T \nu \le 0.
\end{eqnarray*}
The latter seven complementarity conditions in the above formulation yield the following linear complementarity problem (LCP): $0\le u \perp M u + q \ge 0$, where $u=(\mu, \nu, t_+, t_-, \lambda_+, \lambda_-,  \vartheta, \varphi) \in \mathbb R^\ell_+ \times \mathbb R^r_+  \times \mathbb R_+\times \mathbb R_+\times  \mathbb R^m_+\times \mathbb R^m_+ \times \mathbb R^m_+\times \mathbb R^m_+$, $M$ is a constant matrix of order $(\ell+r+4m+2)$ that depends on $A, C, W, \alpha$ only, and the vector $q=(d, -\Gamma, 1, 1, 0, 0, \alpha b, -\alpha b)\in \mathbb R^\ell \times \mathbb R^r \times  \mathbb R\times \mathbb R\times \mathbb R^m\times \mathbb R^m \times \mathbb R^m\times \mathbb R^m$. Denote this LCP by LCP$(q, M)$.
For any given $b\in A \mathcal C$, LCP$(q, M)$ attains a solution $u$ which pertains to the Lagrange multipliers $\lambda, \mu, \nu$, $t_+, t_-$ and the slack variables  $\vartheta$ and $\varphi$. This shows that for any given $b\in A \mathcal C$, LCP$(q, M)$ has a nonempty solution set $\SOL(q, M)$. Further, for any $\wt u=(\wt\mu, \wt\nu, \wt t_+, \wt t_-, \wt \lambda_+, \wt\lambda_-,  \wt\vartheta, \wt\varphi) \in \SOL(q, M)$, if follows from the last two complementarity conditions that $\wt x:=-\frac{1}{\alpha} \big( A^T \wt \lambda_+ - A^T \wt \lambda_- + C^T \wt \mu + W^T \wt \nu \big)$ satisfies $A \wt x = b$. Besides,  $\wt \lambda:=\wt \lambda_+-\wt \lambda_-$, $\wt \mu, \wt \nu$, and $\wt \theta_+ = \wt\theta_-=0$ satisfy
%
% are Lagrange multipliers
%
%associated with $(\wt t_+, \wt t_-, \wt x)$
%
 the optimality conditions of the underlying optimization problem (\ref{eqn:opt_problem_Lipschtiz}) at $(\wt t_+, \wt t_-, \wt x)$ for the given $b \in A \mathcal C$.
Define the matrix
\[
   E \, := \, -\frac{1}{\alpha}\begin{bmatrix} C^T & W^T & 0 & 0 & A^T & -A^T  & 0 & 0 \end{bmatrix} \in \mathbb R^{N\times  (\ell+r+4m+2)}.
\]
It follows from the solution uniqueness of the underlying optimization problem (\ref{eqn:opt_problem_Lipschtiz}) that for any $b \in A \mathcal C$, $E\SOL(q, M)$ is singleton. Define the function $F(q):=E\SOL(q, M)$. Hence,  $F(\cdot)$ is singleton on the closed convex set $\Wcal:=\{ q=(d, -\Gamma, 1, 1, 0, 0, \alpha b, -\alpha b) \, | \, b \in A\mathcal C\}$ and $x_*(b)=F(q)$. By Theorem~\ref{thm:LCP_singleton}, $F$ is Lipscthiz on $\Scal$, i.e., there exists $ L>0$ such that $\|F(q') - F(q)\|_2 \le L \| q' - q\|_2$ for all $q', q \in \Wcal$. Since $\|q' - q\|_2 = \sqrt{2} \alpha \|b' -b\|_2$ for any $b', b \in A \mathcal C$, the desired (global) Lipschitz property of $x_*$ in $b$ holds.
\end{proof}

For a general polyhedral set $\mathcal C$, it follows from Lemmas~\ref{lem:primal_dual_Lasso} and \ref{lem:primal_dual_BPDN} that $y_* + b \in A \mathcal C$ (respectively  $\frac{\sigma y_*}{\|y_*\|_2}+b \in A \mathcal C$), where $y_*$ is a solution to the dual problem (\ref{eqn:Lasso_dual}) (respectively (\ref{eqn:BPDN_dual})). Practically, $y_*$ is approximated by a numerical sequence $(y^k)$ generated in the first stage. For the LASSO-like problem (\ref{eqn:Lasso_primal}), one uses $y^k+b$ (with a large $k$) instead of $y_*+b$ in the $\mbox{BP}_{\mbox{LASSO}}$ (\ref{eqn:BP_Lasso}) in the second stage. This raises the question of whether $y^k+b \in A \mathcal C$ for all large $k$, which pertains to the feasibility of $b\in A \mathcal C$ subject to perturbations. The same question also arises for the BPDN-like problem  (\ref{eqn:BPDN_primal}).
We discuss a mild sufficient condition on $A$ and $\Ccal$ for the feasibility under perturbations for a given $b$. Suppose $\Ccal$ has a nonempty interior and $A$ has full row rank, which holds for almost all $A \in \mathbb R^{m\times N}$ with $N \ge m$. In view of $\mbox{ri}(A\Ccal)=A \mbox{ri}(\Ccal)=A \mbox{int}(\Ccal)$ \cite[Theorem 6.6]{Rockafellar_book70}, we see that $A\Ccal$ has nonempty interior given by $A\mbox{ri}(\Ccal)=A \mbox{int}(\Ccal)$. Thus if $\wh b:=y_* + b$ is such that $\wh b=A \wh x$ for some $\wh x \in \mbox{int}(\Ccal)$, then there exists a neighborhood $\mathcal N$ of $\wh b$ such that  $b \in A \mathcal C$ for any $b \in \mathcal N$. Additional sufficient conditions independent of $b$ can also be established. For example, suppose $\Ccal$ is unbounded, and consider its recession cone $\Kcal:=\{ x \, | \, C x \le 0\}$. Let $h_i\in \mathbb R^N$ be generators of $\Kcal$, i.e., $\Kcal=\mbox{cone}\{ h_1, \ldots, h_s\}$. Define the matrix $H:=[h_1, \ldots, h_s]$. A sufficient condition for $A \Ccal$ to be open is $A \Kcal=\mathbb R^m$, which is equivalent to $A H \mathbb R^s_+ = \mathbb R^m$. By the Theorem of Alternative, the latter condition is further equivalent to (i) $AH$ has full row rank; and (ii) there exists a nonnegative matrix $Q$ such that $AH(I+Q)=0$. Some simplified conditions can be derived from it for special cases. For instance, when $\mathcal C = \mathbb R^N$, $A$ need to have full row rank; when $\mathcal C=\mathbb R^N_+$, $A$ need to have full row rank and $ A (I + Q)=0$ for a nonnegative matrix $Q$.

Based on the previous results, we establish the overall convergence of the two-stage algorithms.

\begin{theorem}
 Consider the two-stage distributed algorithms for the LASSO-like problem (\ref{eqn:Lasso_primal}) (resp. the BPDN-like problem (\ref{eqn:BPDN_primal})) with the norm $\|\cdot \|_\star$. Let $(y^k)$ be a sequence generated in the first stage such that $(y^k) \rightarrow y_*$ as $k \rightarrow \infty$ and $b+y^k\in A \mathcal C$ (resp. $b+\frac{\sigma y^k}{\|y^k\|_2} \in A \mathcal C$) for all large $k$, where $y_*$ is a solution to the dual problem (\ref{eqn:Lasso_dual}) (resp. (\ref{eqn:BPDN_dual})), and $(x^s)$ be a convergent sequence in the second stage for solving (\ref{eqn:BP_Lasso}) (resp. (\ref{eqn:BP_BPDN})). Then the following hold:
 \begin{itemize}
   \item [(i)]  $(x^s) \rightarrow x_*$ as $k, s \rightarrow \infty$,  where $x_*$ is the unique solution to the regularized $\mbox{BP}_{\mbox{LASSO}}$ (\ref{eqn:BP_Lasso}) (resp. $\mbox{BP}_{\mbox{BPDN}}$ (\ref{eqn:BP_BPDN})).

   \item [(ii)] Let $\|\cdot \|_\star$ be the $\ell_1$-norm. Suppose $(y^k)$ has the convergence rate $O(\frac{1}{k^q})$ and $(x^s)$ has the convergence rate $O(\frac{1}{s^r})$. Then $(x^s)$ converges to $x_*$ in the rate of $ O(\frac{1}{k^q})+O(\frac{1}{s^r})$.
 \end{itemize}
\end{theorem}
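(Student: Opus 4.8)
The plan is to bound the error $\|x^s - x_*\|$ by a triangle inequality that separates the two sources of approximation: the inexactness of the first-stage dual iterate $y^k$ (which perturbs the right-hand side of the second-stage problem) and the inexactness of the second-stage iterate $x^s$. Throughout, write $x_*(\cdot)$ for the unique-solution map of the regularized BP-like problem (\ref{eqn:BP_gen_polyhedral}) as a function of its right-hand side, as in Theorems~\ref{thm:continuity_x_*} and \ref{thm:continuity_x_*_convex_PA}. For the LASSO-like problem, the exact target is $x_* = x_*(b + y_*)$, while running the (convergent) second-stage scheme with the first-stage output $y^k$ produces iterates satisfying $x^s \to x_*(b + y^k)$ as $s \to \infty$; the feasibility hypothesis $b + y^k \in A\mathcal C$ for all large $k$ guarantees $x_*(\cdot)$ is defined at these points. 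For the BPDN-like problem the same holds with $b + y^k$ replaced by $b + \sigma y^k / \|y^k\|_2$.

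First I would prove (i). Fix $\varepsilon > 0$ and decompose
\[
 \| x^s - x_* \| \ \le \ \underbrace{\| x^s - x_*(b + y^k) \|}_{(\mathrm A)} \ + \ \underbrace{\| x_*(b + y^k) - x_*(b + y_*) \|}_{(\mathrm B)}.
\]
Term $(\mathrm A)$ is the second-stage iteration error, which tends to $0$ as $s \to \infty$ by the assumed convergence of the second-stage scheme. Term $(\mathrm B)$ tends to $0$ as $k \to \infty$: since $y^k \to y_*$ we have $b + y^k \to b + y_*$, and Theorem~\ref{thm:continuity_x_*} gives continuity of $x_*(\cdot)$ at $b + y_*$. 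Choosing $K$ so that $(\mathrm B) < \varepsilon/2$ for $k \ge K$, and then, for each such $k$, choosing $S_k$ so that $(\mathrm A) < \varepsilon/2$ for $s \ge S_k$, yields $\|x^s - x_*\| < \varepsilon$, i.e.\ $x^s \to x_*$ as $k, s \to \infty$. For the BPDN case I would first note $y_* \ne 0$ by Lemma~\ref{lem:primal_dual_BPDN}, so that $y \mapsto \sigma y / \|y\|_2$ is continuous at $y_*$; hence $b + \sigma y^k/\|y^k\|_2 \to b + \sigma y_*/\|y_*\|_2$ and the same two-term argument applies.

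For (ii), with $\|\cdot\|_\star$ the $\ell_1$-norm, the objective $\|Ex\|_1$ is a convex piecewise affine function of $x$, so Theorem~\ref{thm:continuity_x_*_convex_PA} provides a Lipschitz constant $L$ for $x_*(\cdot)$. By assumption $(\mathrm A) = O(1/s^r)$. For $(\mathrm B)$, the Lipschitz bound gives
\[
 \| x_*(b + y^k) - x_*(b + y_*) \| \ \le \ L \, \| y^k - y_* \| \ = \ O\!\left( \tfrac{1}{k^q} \right),
\]
using $\| (b+y^k) - (b+y_*) \| = \|y^k - y_*\|$. Summing the two bounds gives the claimed rate $O(1/k^q) + O(1/s^r)$. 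In the BPDN case one additionally uses that $y \mapsto \sigma y/\|y\|_2$ is locally Lipschitz near $y_* \ne 0$, so that $\|\sigma y^k/\|y^k\|_2 - \sigma y_*/\|y_*\|_2\| \le L' \|y^k - y_*\| = O(1/k^q)$ for large $k$, which feeds into the Lipschitz bound for $x_*(\cdot)$ to yield the same rate.

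The heavy lifting is already carried by Theorems~\ref{thm:continuity_x_*} and \ref{thm:continuity_x_*_convex_PA}; given these, the argument is a routine two-term estimate. The main points requiring care are therefore structural rather than computational: ensuring the perturbed right-hand side stays in $A\mathcal C$ so that $x_*(\cdot)$ is defined along the sequence (covered by the feasibility hypothesis and the perturbation discussion preceding the theorem), and, for BPDN, controlling the nonlinear normalization $\sigma y/\|y\|_2$, which is only well-behaved because $y_* \ne 0$ (Lemma~\ref{lem:primal_dual_BPDN}). I expect no genuine obstacle beyond correctly bookkeeping the joint limit in $(k, s)$.
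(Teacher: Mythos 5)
Your proposal is correct and follows essentially the same route as the paper: the same triangle-inequality decomposition into the second-stage iteration error and the perturbation error $\| x_*(b+y^k) - x_*(b+y_*)\|$, with Theorem~\ref{thm:continuity_x_*} supplying continuity for part (i) and Theorem~\ref{thm:continuity_x_*_convex_PA} supplying the Lipschitz bound for part (ii). Your explicit treatment of the BPDN normalization $y \mapsto \sigma y/\|y\|_2$ via $y_* \ne 0$ is a detail the paper leaves to ``a similar argument,'' but it does not change the approach.
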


\begin{proof}
  We consider the LASSO-like problem only; the similar argument holds for the BPDN-like problem.

  (i) For each $k$, let $\wh b^k:= b + y^k$, where $(y^k)$ is a sequence generated from the first stage  that converges to $y_*$. When $\wh b^k$ is used in the $\mbox{BP}_{\mbox{LASSO}}$ (\ref{eqn:BP_Lasso}) in the second stage, i.e., the constraint $A x = b+y_*$ is replaced by $A x =\wh b^k$, we have $\| x^s(\wh b^k) - x_*\|\le \| x^s(\wh b^k) - x_*(\wh b^k)\| + \| x_*(\wh b^k) - x_*\|$, where $x_*(\wh b^k)$ is the unique solution to the $\mbox{BP}_{\mbox{LASSO}}$ (\ref{eqn:BP_Lasso}) corresponding to the constraint $A x = \wh b^k$ (and $x \in \mathcal C$). Since $\big( x^s(\wh b^k) \big)$ converges to $x_*(\wh b^k)$ as $s \rightarrow \infty$ (for a fixed $k$), $\| x^s(\wh b^k) - x_*(\wh b^k)\|$ converges to zero. Further, note that $x_*=x_*(\wh b_*)$ with $\wh b_*:=b+ y_*$. Then it follows from the continuity property shown in Theorem~\ref{thm:continuity_x_*} that $\| x_*(\wh b^k) - x_*\|= \| x_*(\wh b^k) - x_*(\wh b_*)\|$ converges to zero as $k \rightarrow \infty$ in view of the convergence of $(y^k)$ to $y_*$. This establishes the convergence of the two-stage algorithm. %%({\bf why $\wh b^k \in A \mathcal C$?})

  (ii) When $\|\cdot\|_\star$ is the $\ell_1$-norm, we deduce via Theorem~\ref{thm:continuity_x_*_convex_PA} that $x_*$ is Lipschitz continuous in $b$ on $A \mathcal C$, i.e., there exists a constant $L>0$ such that $\|x_*(b) - x_*(b')\|\le L\|b - b'\|$ for any $b, b' \in A \mathcal C$. Hence,
   $\| x^s(\wh b^k) - x_*\|\le \| x^s(\wh b^k) - x_*(\wh b^k)\| + \| x_*(\wh b^k) - x_*(\wh b_*)\| \le \| x^s(\wh b^k) - x_*(\wh b^k)\| + L \| \wh b^k - \wh b_*\| = \| x^s(\wh b^k) - x_*(\wh b^k)\| + L \| y^k - y_*\| = O(\frac{1}{s^r}) +O(\frac{1}{k^q}) $.
\end{proof}

%%%%%%%%%%%%%%%%%%%%%%%%%%%%%%%%%%%%%%%%%%%%%%%%%%%%%%%%%%%%%%%%%%%%%%%%%%%%%%%%%%

%------------------------------------------------------------------------------------
%
\section{Numerical Results} \label{sect:numerical}

We present numerical results to demonstrate the performance of the proposed two-stage column partition based  distributed algorithms for LASSO, fused LASSO, BPDN, and their extensions. In each case, we consider a network of $p=40$ agents with two topologies: the first is a cyclic graph, and the second is a random graph satisfying $(i, i+1) \in \mathcal E, \forall \, i=1, \ldots, p-1$ (which is needed for the fused problems) shown in Figure~\ref{Fig:graph2},
which are referred to as Scenario 1 and Scenario 2, respectively.
%
%Scenario 1 has 4 agents whereas Scenario 2 has 7 agents.
%
The matrix $A \in \mathbb R^{10 \times 400}$ is a random normal matrix, and $b \in \mathbb R^{10}$ is a random normal vector. We consider even column partitioning, i.e., each agent has 10 columns, and use the distributed averaging scheme with optimal constant edge weight \cite[Section 4.1]{XiaoBoyd_SCL04} for consensus computation.
\begin{figure}[htbp]
\begin{centering}
\includegraphics[width=0.6\textwidth]{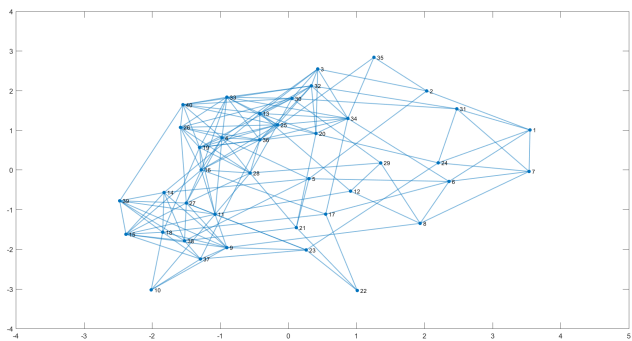}
\caption{The graph for Scenario 2.}
\label{Fig:graph2}
\end{centering}
%%\vspace{-0.1in}
\end{figure}

To evaluate the accuracy of the proposed schemes, let $J$ denote the objective function in each case, and $x^*_\text{dist}$ denote the numerical solution obtained using the proposed distributed schemes. Let $J^*_\text{dist}:=J(x^*_\text{dist})$, $J^*_\text{true}$ be the true optimal value obtained from a high-precision centralized scheme, and $J_\text{RE} := \frac{| J^*_{\text{dist}} - J^*_{\text{true}}|}{|J^*_{\text{true}}|}$ be the relative error of the optimal value.

\gap

%---------------------------------------------------------
%
%\subsection{LASSO}

\noindent $\bullet$ {\bf LASSO} \ The $\ell_1$-penalty parameter $\lambda =1.8$, and the regularization parameter in the second stage $\alpha = 0.18$.
%
%The termination tolerance for the first stage is $10^{-7}$, and that for the second stage %is $8\times 10^{-4}$.
%
When $\mathcal C=\mathbb R^N$ (resp. $\mathcal C =\mathbb R^N_+$), the termination tolerances for the first and second stage  are $10^{-7}$ (resp. $10^{-6}$) and $10^{-5}$ (resp.  $10^{-5}$) respectively.
When $\Ccal=\mathbb R^N$, the behaviors of $\|y^k - y_*\|_2$ in Stage one and Stage two over two graphs are shown in Figure~\ref{Fig:error_y_k}. It is observed that the trajectories of $\|y^k - y_*\|_2$ over two graphs coincide in both the stages.
%
%The tolerance used for the termination criteria in the first and second stages is $10^{-7}$.
%

\begin{multicols}{2}
\begin{tabular}{|p{1.5cm}|p{1.7cm}|c|c|}
\hline
 \multicolumn{2}{|c|}{Constraint}& $J^*_\text{dist}$ & $J_\text{RE}$ \\ \hline
\ \ $\Ccal =\mathbb R^N$ & Scenario 1  & 1.7211 &  $6.6\times 10^{-4}$ \\ %\hline
\cline{2-4}
  & Scenario 2 & 1.7211 & $6.7\times 10^{-4}$  \\  \hline
%\hline
\end{tabular}

\columnbreak

%%\vspace*{2mm}\hspace*{-7mm}
%\begin{tabular}{|p{4.8cm}|p{8.1cm}|c|c|c|}
%

\begin{tabular}{|p{1.5cm}|p{1.7cm}|c|c|}
\hline
 \multicolumn{2}{|c|}{Constraint}& $J^*_\text{dist}$ & $J_\text{RE}$ \\ \hline
\ \ $\Ccal =\mathbb R^N_+$ & Scenario 1  & 1.9012 &  $1.6\times 10^{-5}$ \\ %\hline
\cline{2-4}
  & Scenario 2 & 1.9012 & $1.2\times 10^{-5}$ \\  \hline
%\hline
\end{tabular}

\end{multicols}
The scaled regularized BP is also applied to the second stage scheme of the LASSO (cf. Remark~\ref{remark:Lasso_scaled_BP}), which yields the similar performance and accuracy. Its details are omitted.

%---------------------------------------------------------
%
%\subsection{Standard BPDN}:
%
\begin{figure}[t]  \centering
\begin{tabular}{cc}
\includegraphics[width=0.47\textwidth]{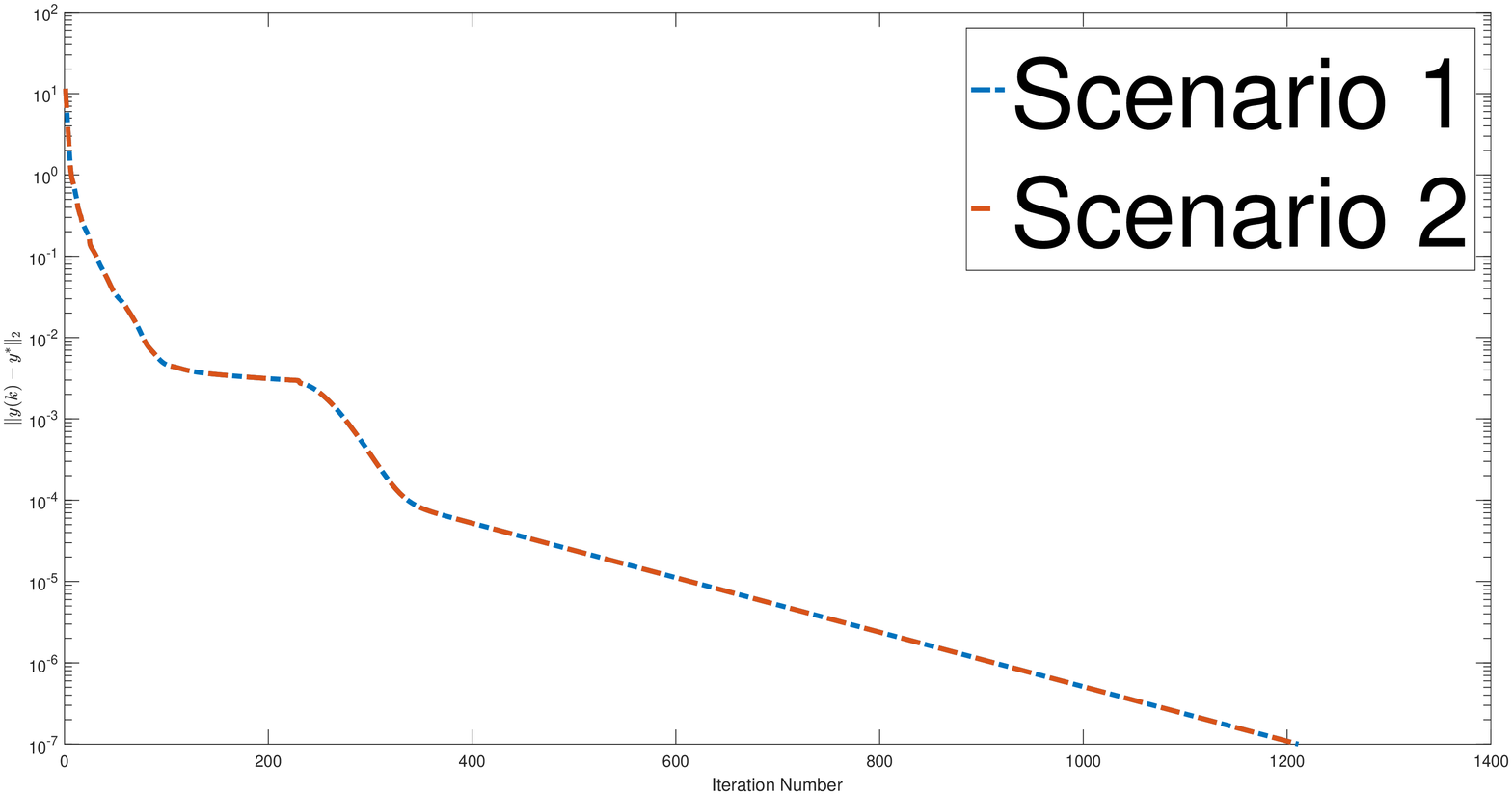} &  %%\\ (a) & (b) \\
\includegraphics[width=0.47\textwidth]{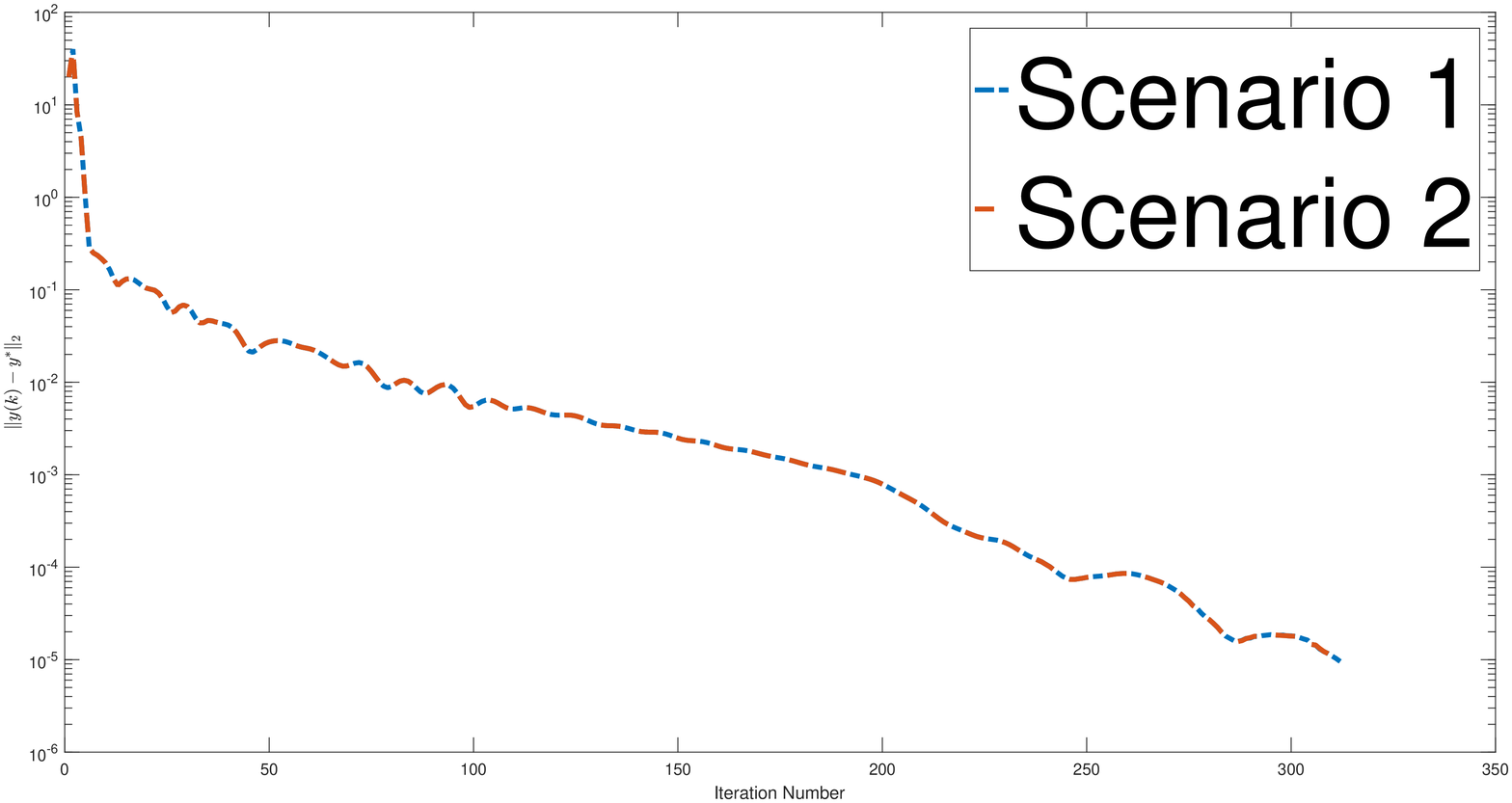}
\end{tabular}
\caption{Trajectories of $\|y^k - y_*\|_2$ in the LASSO for $\Ccal=\mathbb R^N$ (using 10 logarithmic scale for the vertical axis and linear scale for the horizontal axis). Left: Stage one; Right: Stage two.}
\label{Fig:error_y_k}
%
%%\vspace{-0.1in}
\end{figure}

\gap

\noindent $\bullet$ {\bf BPDN} \ The parameter $\sigma =0.2$, and the regularization parameter in the second stage $\alpha = 0.15$. Further, $\|b\|_2=2.9688$ such that $\|b\|_2>\sigma$.
When $\mathcal C=\mathbb R^N$ (resp. $\mathcal C =\mathbb R^N_+$), the termination tolerances for the first and second stage  are $10^{-7}$ (resp. $10^{-5}$) and $8\times 10^{-4}$ (resp.  $2\times 10^{-4}$) respectively.
%
%The termination tolerance for the first stage is $10^{-7}$, and that for the second stage is $8\times %10^{-4}$.
%
%%%The tolerance used for the termination criteria in the first and second stages is $10^{-7}$.

\begin{multicols}{2}
%
%\vspace*{2mm}\hspace*{-7mm}
%\begin{tabular}{|p{4.8cm}|p{8.1cm}|c|c|c|}
%
\begin{tabular}{|p{1.5cm}|p{1.7cm}|c|c|}
\hline
 \multicolumn{2}{|c|}{Constraint}& $J^*_\text{dist}$ & $J_\text{RE}$ \\ \hline
\ \ $\Ccal =\mathbb R^N$ & Scenario 1  & 1.0271 &  $2.2\times 10^{-5}$ \\ %\hline
\cline{2-4}
  & Scenario 2 & 1.0209 & $5.9\times 10^{-3}$  \\  \hline
%\hline
\end{tabular}

\columnbreak

%%\vspace*{2mm}\hspace*{-7mm}
%\begin{tabular}{|p{4.8cm}|p{8.1cm}|c|c|c|}
%

\begin{tabular}{|p{1.6cm}|p{1.8cm}|c|c|}
\hline
 \multicolumn{2}{|c|}{Constraint}& $J^*_\text{dist}$ & $J_\text{RE}$ \\ \hline
\ \ $\Ccal =\mathbb R^N_+$ & Scenario 1  & 1.1599  & $5.4\times 10^{-4}$   \\ %\hline
\cline{2-4}
  & Scenario 2 & 1.1607 & $1.3\times 10^{-3}$  \\  \hline
%\hline
\end{tabular}

\end{multicols}
%%%\newpage

%---------------------------------------------------------
%
%\subsection{Fused Lasso}

\gap

\noindent $\bullet$ {\bf Fused LASSO} \ The matrix $E = \begin{bmatrix}
\lambda I \\
\gamma D_1
\end{bmatrix} $ with $\lambda =0.6$ and $\gamma=0.4$, and the regularization parameter $\alpha = 0.18$. For $\mathcal C=\mathbb R^N$ and $\mathcal C =\mathbb R^N_+$, the termination tolerances for the first and second stages  are $10^{-5}$  and $10^{-4}$, respectively.
%
%When $\mathcal C =\mathbb R^N_+$, the termination tolerances for the first and second stages are $10^{-4}$ %and $10^{-5}$ respectively.
%

\begin{multicols}{2}
%
%\vspace*{2mm}\hspace*{-7mm}
%\begin{tabular}{|p{4.8cm}|p{8.1cm}|c|c|c|}
%
\begin{tabular}{|p{1.5cm}|p{1.7cm}|c|c|}
\hline
 \multicolumn{2}{|c|}{Constraint}& $J^*_\text{dist}$ & $J_\text{RE}$ \\ \hline
\ \ $\Ccal =\mathbb R^N$ & Scenario 1  & 1.2431 & $7.2\times 10^{-3}$ \\ %\hline
\cline{2-4}
  & Scenario 2 &  1.2376 &  $2.8 \times 10^{-3}$  \\  \hline
%\hline
\end{tabular}

\columnbreak

%%\vspace*{2mm}\hspace*{-7mm}
%\begin{tabular}{|p{4.8cm}|p{8.1cm}|c|c|c|}
%

\begin{tabular}{|p{1.5cm}|p{1.7cm}|c|c|}
\hline
 \multicolumn{2}{|c|}{Constraint}& $J^*_\text{dist}$ & $J_\text{RE}$ \\ \hline
\ \ $\Ccal =\mathbb R^N_+$ & Scenario 1  & 1.4346 &  $4.2\times 10^{-3}$ \\ %\hline
\cline{2-4}
  & Scenario 2 & 1.4474  & $1.3\times 10^{-2}$ \\  \hline
%\hline
\end{tabular}

\end{multicols}

%---------------------------------------------------------
%
%%\subsection{BPDN from Fused Lasso}

\gap

\noindent $\bullet$ {\bf BPDN arising from Fused Problem} \ Consider the BPDN-like problem: $\min_{x \in \Ccal} \| E x \|_1$ subject to $\| A x -b \|_2 \le \sigma$, where the matrix $E = \begin{bmatrix}
\lambda I \\
\gamma D_1
\end{bmatrix} $ with $\lambda =0.6$ and $\gamma=0.4$, and the regularization parameter  $\alpha = 0.18$. Further, $\sigma=0.2$ and $\|b\|_2=2.9688$.
When $\mathcal C=\mathbb R^N$ (resp. $\mathcal C =\mathbb R^N_+$), the termination tolerances for the first and second stages  are $10^{-5}$ (resp. $10^{-4}$) and $10^{-5}$ (resp. $10^{-5}$) respectively.
%
%The termination tolerances for the first and second stages are $10^{-5}$ and $8\times 10^{-5}$, respectively.
%
% is $10^{-5}$, and that for the second stage is $10^{-7}$.

%The termination tolerance  for both stages is $10^{-7}$.

%
%The termination tolerance for the first stage is $10^{-5}$, and that for the second stage is $10^{-7}$.
%

\begin{multicols}{2}
%
%\vspace*{2mm}\hspace*{-7mm}
%\begin{tabular}{|p{4.8cm}|p{8.1cm}|c|c|c|}
%
\begin{tabular}{|p{1.5cm}|p{1.7cm}|c|c|}
\hline
 \multicolumn{2}{|c|}{Constraint}& $J^*_\text{dist}$ & $J_\text{RE}$ \\ \hline
\ \ $\Ccal =\mathbb R^N$ & Scenario 1  & 1.2823 & $6.5\times 10^{-3}$ \\ %\hline
\cline{2-4}
  & Scenario 2 & 1.2841  & $7.9\times 10^{-3}$ \\  \hline
%\hline
\end{tabular}

\columnbreak

%%\vspace*{2mm}\hspace*{-7mm}
%\begin{tabular}{|p{4.8cm}|p{8.1cm}|c|c|c|}
%

\begin{tabular}{|p{1.5cm}|p{1.7cm}|c|c|}
\hline
 \multicolumn{2}{|c|}{Constraint}& $J^*_\text{dist}$ & $J_\text{RE}$ \\ \hline
\ \ $\Ccal =\mathbb R^N_+$ & Scenario 1  & 1.4815 & $4.9\times 10^{-4}$   \\ %\hline
\cline{2-4}
  & Scenario 2 & 1.4817 & $5.9\times 10^{-4}$ \\  \hline
%\hline
\end{tabular}

\end{multicols}

\gap

\noindent $\bullet$ {\bf Group LASSO} \ Consider $\Ccal=\mathbb R^N$ and a cyclic graph with the penalty parameter $\lambda =1.8$, and the regularization parameter $\alpha = 0.18$. The termination tolerances for the first and second stages are $10^{-5}$ and $8\times 10^{-6}$, respectively. The numerical tests show that $J^*_\text{dist}= 1.2208$ and $J_\text{RE}= 9.8\times 10^{-4}$.

The above results demonstrate the effectiveness of the proposed two-stage distributed algorithms.

%%\gap

%------------------------------------------------------------------------------------
%
\section{Conclusions} \label{sect:conclusions}

In this paper, column partition based distributed schemes are developed for a class of densely coupled convex sparse optimization problems, including BP, LASSO, BPDN and their extensions. By leveraging duality theory, exact regularization techniques, and solution properties of the aforementioned problems, we develop dual based fully distributed schemes via column partition. Sensitivity results are used to establish overall convergence of the two-stage distributed schemes for LASSO, BPDN, and their extensions. The proposed schemes and techniques shed light on the development of column partition based distributed schemes for a broader class of densely coupled problems, which will be future research topics.
%
%Future research includes the extension of the column partition based distributed schemes to a broader class %of densely coupled problems.
%

%-------------------------------------------------------------------------------------

\end{document}